\documentclass[a4paper, reqno, 12pt]{amsart}

\usepackage[usenames,dvipsnames]{color}
\usepackage{amsthm,amsfonts,amssymb,amsmath,amsxtra}
\usepackage[all]{xy}
\SelectTips{cm}{}
\usepackage{xr-hyper}
\usepackage[colorlinks=
   citecolor=Black,
   linkcolor=Red,
   urlcolor=Blue]{hyperref}
\usepackage{verbatim}

\usepackage[margin=1.25in]{geometry}
\usepackage{mathrsfs}

\newcommand{\remind}[1]{{\bf ** #1 **}}

\RequirePackage{xspace}
\RequirePackage{etoolbox}
\RequirePackage{varwidth}
\RequirePackage{enumitem}
\RequirePackage{tensor}
\RequirePackage{mathtools}
\RequirePackage{longtable}
\RequirePackage{multirow}

\usepackage[url=false,doi=false,isbn=false,sorting=nyt,giveninits=true,maxbibnames=4]{biblatex}
\addbibresource{mybibfile.bib}

\setcounter{tocdepth}{1}

\def\le{\leqslant}

\def\L{\Lambda}

\def\o{\omega}

\def\l{\lambda}

\def\<{\langle}
\def\>{\rangle}

\newcommand{{\BG}}{\ensuremath{\mathbb {G}}\xspace}

\newcommand{{\BK}}{\ensuremath{\mathbb {K}}\xspace}

\newtheorem{theorem}{Theorem}
\newtheorem{proposition}[theorem]{Proposition}
\newtheorem{lemma}[theorem]{Lemma}

\newtheorem{corollary}[theorem]{Corollary}

\theoremstyle{definition}

\newtheorem{remark}[theorem]{Remark}

\numberwithin{equation}{section}
\numberwithin{theorem}{section}


\setitemize[0]{leftmargin=*,itemsep=\the\smallskipamount}
\setenumerate[0]{leftmargin=*,itemsep=\the\smallskipamount}

\renewcommand{\to}{%
   \ifbool{@display}{\longrightarrow}{\rightarrow}%
   }
\let\shortmapsto\mapsto
\renewcommand{\mapsto}{%
   \ifbool{@display}{\longmapsto}{\shortmapsto}%
   }
\newlength{\olen}
\newlength{\ulen}
\newlength{\xlen}
\newcommand{\xra}[2][]{%
   \ifbool{@display}%
      {\settowidth{\olen}{$\overset{#2}{\longrightarrow}$}%
       \settowidth{\ulen}{$\underset{#1}{\longrightarrow}$}%
       \settowidth{\xlen}{$\xrightarrow[#1]{#2}$}%
       \ifdimgreater{\olen}{\xlen}%
          {\underset{#1}{\overset{#2}{\longrightarrow}}}%
          {\ifdimgreater{\ulen}{\xlen}%
             {\underset{#1}{\overset{#2}{\longrightarrow}}}
             {\xrightarrow[#1]{#2}}}}%
      {\xrightarrow[#1]{#2}}
   }
\makeatother
\newcommand{\xyra}[2][]{%
   \settowidth{\xlen}{$\xrightarrow[#1]{#2}$}%
   \ifbool{@display}%
      {\settowidth{\olen}{$\overset{#2}{\longrightarrow}$}%
       \settowidth{\ulen}{$\underset{#1}{\longrightarrow}$}%
       \ifdimgreater{\olen}{\xlen}%
          {\mathrel{\xymatrix@M=.12ex@C=3.2ex{\ar[r]^-{#2}_-{#1} &}}}%
          {\ifdimgreater{\ulen}{\xlen}%
             {\mathrel{\xymatrix@M=.12ex@C=3.2ex{\ar[r]^-{#2}_-{#1} &}}}
             {\mathrel{\xymatrix@M=.12ex@C=\the\xlen{\ar[r]^-{#2}_-{#1} &}}}}}%
      {\mathrel{\xymatrix@M=.12ex@C=\the\xlen{\ar[r]^-{#2}_-{#1} &}}}%
   }
\makeatletter
\newcommand{\xla}[2][]{%
   \ifbool{@display}%
      {\settowidth{\olen}{$\overset{#2}{\longleftarrow}$}%
       \settowidth{\ulen}{$\underset{#1}{\longleftarrow}$}%
       \settowidth{\xlen}{$\xleftarrow[#1]{#2}$}%
       \ifdimgreater{\olen}{\xlen}%
          {\underset{#1}{\overset{#2}{\longleftarrow}}}%
          {\ifdimgreater{\ulen}{\xlen}%
             {\underset{#1}{\overset{#2}{\longleftarrow}}}
             {\xleftarrow[#1]{#2}}}}%
      {\xleftarrow[#1]{#2}}
   }
\newcommand{\isoarrow}{%
   \ifbool{@display}{\overset{\sim}{\longrightarrow}}{\xrightarrow\sim}%
   }

\def\blambda{\boldsymbol{\lambda}}
   
\begin{document}

\title[]{Canonical bases of tensor products and positivity properties}

\author[Jiepeng Fang]{Jiepeng Fang}
\address{Department of Mathematics and New Cornerstone Science Laboratory, The University of Hong Kong, Pokfulam, Hong Kong, Hong Kong SAR, China}
\email{fangjp@hku.hk}

\author[Xuhua He]{Xuhua He}
\address{Department of Mathematics and New Cornerstone Science Laboratory, The University of Hong Kong, Pokfulam, Hong Kong, Hong Kong SAR, China}
\email{xuhuahe@hku.hk}

\thanks{}

\keywords{Quantum groups, canonical bases, positivity property, tensor product}
\subjclass[2020]{17B37, 20G42}



\begin{abstract}
Let $\mathbf{U}$ be a quantum group of symmetric type. We introduce the {\it thickening realization} to realize (a suitable approximation of) the tensor product ${^{\omega}\Lambda_{\lambda_1}}\otimes \Lambda_{\lambda_2}$ of a simple integrable lowest weight module and a highest weight module as a subquotient of the Verma module of a larger quantum group $\tilde{\mathbf{U}}$. For the canonical basis of the tensor product, we show that the entries of the transition matrix from the pure tensor basis to it, and the structure constants of the action by spherical parabolic subalgebras of the modified quantum group $\dot{\mathbf{U}}$ are given by the structure constants of the comultiplication and multiplication in the negative part $\tilde{\mathbf{U}}^-$ of $\tilde{\mathbf{U}}$ with respect to its canonical basis respectively. Thus, we deduce the positivity property of the canonical basis of the tensor product. 

In particular, we obtain the positivity property of the canonical bases for the action of $\dot{\mathbf{U}}$ on simple integrable highest weight modules, generalizing Lusztig's theorem from Chevalley generators to any canonical basis elements of $\dot{\mathbf{U}}$; for the action of Chevalley generators on ${^{\omega}\Lambda_{\lambda_1}}\otimes \Lambda_{\lambda_2}$; and for multiplication in $\dot{\mathbf{U}}$, as well as the actions on arbitrary tensor products. At $v=1$, these results connect to geometric total positivity on double flag varieties, explored in \cite{HX}.
\end{abstract}

\maketitle


\section*{Introduction}

\subsection{Canonical bases}

The quantum group $\mathbf{U}$, also called the quantized enveloping algebra, was independently introduced by Drinfeld and Jimbo. It is a Hopf algebra over $\mathbb{Q}(v)$, as a non-trivial deformation of the universal enveloping algebra of the Kac-Moody Lie algebra. 

The theory of canonical basis first established by Lusztig \cite{Lusztig-1990, Lusztig-1991} and further studied by Kashiwara \cite{Kashiwara-1991}, reveals profound and elegant structures on the quantum group and its modules. A cornerstone of this theory is the \textbf{positivity property} which is established by Lusztig \cite{Lusztig-1993} for the canonical basis of the positive (resp. negative) parts $\mathbf{U}^+$ (resp. $\mathbf{U}^-$) of $\mathbf{U}$ with respect to the multiplication and the comultiplication, and for the canonical bases of the simple integrable highest (resp. lowest) weight module $\Lambda_{\lambda}$ (resp. ${^{\omega}\Lambda_{\lambda}}$) with respect to the action by Chevalley generators of $\mathbf{U}$. In symmetric cases, these structure constants lie in $\mathbb{N}[v,v^{-1}]$, not merely in $\mathbb{Z}[v,v^{-1}]$. As Lusztig notes \cite{Lusztig-1993}, this positivity is a signature feature, indicating ``a new, extremely rigid structure'' inherent to quantum groups.

The canonical basis of the tensor ${^{\omega}\Lambda_{\lambda_1}}\otimes \Lambda_{\lambda_2}$ of a simple integrable lowest weight module and a highest weight module was constructed by Lusztig \cite{Lusztig-1992}, and later of the tensor product of arbitrary numbers of simple lowest weight and highest weight modules by Bao-Wang \cite{Bao-Wang-2016}. Crucially, the canonical basis of the tensor product is \emph{not} the tensor product of the individual canonical bases. The former is a new basis defined via Lusztig’s quasi-$\mathcal{R}$-matrix and a variant of Kazhdan-Lusztig algorithm. 

A fundamental question is whether the canonical basis of the tensor product also possesses the same positivity property as the individual canonical bases. 

\subsection{Previous works}
A powerful method for establishing the positivity property of canonical bases is their geometrization or categorification. This method realizes the given algebraic objects and their bases in terms of geometric or algebraic categories, where the positivity property often emerges from fundamental geometric or categorical constraints. 

We discuss the quantum group $\mathbf{U}$ associated with symmetric Cartan datum.

Lusztig \cite{Lusztig-1990,Lusztig-1991} built the fundamental framework of the categorification for $\mathbf{U}^-$ by perverse sheaves on the varieties of quiver representations and realized the canonical basis $\mathbf{B}$ by simple perverse sheaves. Khovanov-Lauda \cite{Khovanov-Lauda-2009} and Rouquier \cite{Rouquier-2008} categorified $\mathbf{U}^-$ via quiver Hecke algebras, and later Varagnolo-Vasserot \cite{Varagnolo-Vasserot-2011} and Rouquier \cite{Rouquier-2012} showed that the indecomposable projective modules realized $\mathbf{B}$. These categorifications established the positivity property of $\mathbf{B}$ with respect to the multiplication and the comultiplication in $\mathbf{U}^-$.

Kang-Kashiwara \cite{Kang-Kashiwara-2012} gave a categorification for the simple integrable highest weight module $\Lambda_{\lambda}$ via cyclotomic quiver Hecke algebras, while Zheng \cite{Zheng-2014} and later Fang-Lan-Xiao \cite{Fang-Lan-Xiao-2023} gave geometric realizations by perverse sheaves. These results reconfirmed the positivity property of the canonical basis $\mathbf{B}(\Lambda_{\lambda})$ with respect to the action by the Chevalley generators which had been established by Lusztig \cite{Lusztig-1993}.

For the tensor product of simple integrable highest weight modules, Zheng \cite{Zheng-2014} gave a categorification by perverse sheaves, and Li \cite{Li-2013,Li-2014} showed that simple objects realized the canonical basis. Webster \cite{Webster-2015} gave another categorification building on works of \cite{Khovanov-Lauda-2009,Rouquier-2012,Varagnolo-Vasserot-2011} and established the positivity property of the canonical basis. Fang-Lan \cite{Fang-Lan-2023, Fang-Lan-2025} provided another proof for the positivity property based on works of \cite{Zheng-2014,Li-2014,Fang-Lan-Xiao-2023,Lusztig-1993}. 

The more challenging—and arguably more significant—case involves the tensor product ${^{\omega}\Lambda_{\lambda_1}}\otimes \Lambda_{\lambda_2}$ containing simple lowest weight and highest weight modules at the same time. By a stable limiting process $\lambda\rightarrow \infty$, the canonical bases $\mathbf{B}({^{\omega}\Lambda_{\lambda_1+\lambda}} \otimes \Lambda_{\lambda_2+\lambda})$ glue together to produce the canonical basis $\dot{\mathbf{B}}$ of the modified quantum group $\dot{\mathbf{U}}$. The positivity property of $\mathbf{B}({^{\omega}\Lambda_{\lambda_1}}\otimes \Lambda_{\lambda_2})$ is intimately related to the positivity property of $\dot{\mathbf{B}}$. 

For finite type case, where the distinction between lowest and highest weight modules disappears, the results of \cite{Webster-2015, Fang-Lan-2023, Fang-Lan-2025} already implies the positivity property of $\mathbf{B}({^{\omega}\Lambda_{\lambda_1}}\otimes \Lambda_{\lambda_2})$. For affine type $A$ case, a series of works by Beilinson-Lusztig-MacPherson \cite{Beilinson-Lusztig-MacPherson-1990}, Ginzburg-Vasserot \cite{Ginzburg-Vasserot-1993} and Lusztig \cite{Lusztig-1999,Lusztig-2000} gave a geometric realization of $\dot{\mathbf{U}}$ by perverse sheaves, and Schiffmann-Vasserot \cite{Schiffmann-Vasserot-2000}, McGerty \cite{McGerty-2012} showed that the simple perverse sheaves realized the canonical basis, while Fu-Shoji \cite{Fu-Shoji-2018} gave a Ringel-Hall algebra approach to these works. These results established the positivity property of $\dot{\mathbf{B}}$ with respect to the multiplication in $\dot{\mathbf{U}}$, and implied the positivity of $\mathbf{B}({^{\omega}\Lambda_{\lambda_1}}\otimes \Lambda_{\lambda_2})$. Fan-Li \cite{Fan-Li-2021} and Fu \cite{Fu-2021} also established the positivity property of $\dot{\mathbf{B}}$ with respect to the comultiplication in $\dot{\mathbf{U}}$.

Beyond finite type and affine type $A$ cases, the question of whether the canonical bases of ${^{\omega}\Lambda_{\lambda_1}}\otimes \Lambda_{\lambda_2}$ and $\dot{\mathbf{U}}$ possess the positivity property remained open.  

\subsection{Main result}
In this paper, we consider the quantum group associated with symmetric Cartan datum.

Our method is different from previous categorification approach. We introduce the {\it thickening realization}: enlarge the quantum group $\mathbf{U}$ to a larger one $\tilde{\mathbf{U}}$, then realize (a suitable approximation of) the tensor product module ${^{\omega}\Lambda_{\lambda_1}}\otimes \Lambda_{\lambda_2}$ of $\mathbf{U}$ as a subquotient of a Verma module of $\tilde{\mathbf{U}}$. This realization is compatible with the canonical bases, and provides us with a relation between the canonical basis of the tensor product and the canonical basis of the negative part $\tilde{\mathbf{U}}^-$ of $\tilde{\mathbf{U}}$.

Our first main result expresses the transition matrix between two bases of the tensor product ${^{\omega}\Lambda_{\lambda_1}}\otimes \Lambda_{\lambda_2}$ by the structure constant of the comultiplication in $\tilde{\mathbf{U}}^-$.

\begin{theorem}[\textbf{Theorem \ref{thm:structure-trans}}]\label{first main result}
For any dominant weights $\lambda_1,\lambda_2$, the entries of the transition matrix from the pure tensor basis $\mathbf{B}({^{\omega}\Lambda_{\lambda_1}})\otimes \mathbf{B}(\Lambda_{\lambda_2})$ to the canonical basis $\mathbf{B}({^{\omega}\Lambda_{\lambda_1}}\otimes \Lambda_{\lambda_2})$ of the tensor product ${^{\omega}\Lambda_{\lambda_1}}\otimes \Lambda_{\lambda_2}$ are given by the structure constants of the comultiplication in $\tilde{\mathbf{U}}^-$ with respect to its canonical basis $\tilde{\mathbf{B}}$.  
\end{theorem}

Choosing the approximation of the tensor product ${^{\omega}\Lambda_{\lambda_1}}\otimes \Lambda_{\lambda_2}$ appropriately, the thickening realization is compatible with the action by the spherical parabolic canonical basis elements of $\dot{\mathbf{U}}$. Further enlarging the quantum group $\tilde{\mathbf{U}}$ to $\tilde{\tilde{\mathbf{U}}}$, our second main result relates the multiplication in $\dot{\mathbf{U}}$ with the multiplication in $\tilde{\tilde{\mathbf{U}}}^-$.

\begin{theorem}[\textbf{Theorem \ref{thm:structure-multiplication}}]\label{second main result}
For any $\dot{b},\dot{b}'\in \dot{\mathbf{B}}$, if $\dot{b}$ is spherical parabolic, then the structure constants of the multiplication $\dot{b}\dot{b}'$ in $\dot{\mathbf{U}}$ with respect to $\dot{\mathbf{B}}$ are given by the structure constants of the multiplication in $\tilde{\tilde{\mathbf{U}}}^-$ with respect to its canonical basis $\tilde{\tilde{\mathbf{B}}}$.
\end{theorem}

Combining with the known positivity property of the canonical bases of $\tilde{\mathbf{B}}$ and $\tilde{\tilde{\mathbf{B}}}$ in \cite[Theorem 14.4.13]{Lusztig-1993}, above two results give the following positivity property:

\begin{theorem}[\textbf{Theorem \ref{thm:trans-pos}, Theorem \ref{thm:mult}}]
The canonical bases of the tensor product and the modified quantum group have the following positivity property:
\begin{enumerate}
\item (Positivity of transition matrix) 
for any dominant weights $\l_1$ and $\l_2$, we have 
$$\mathbf{B}({^{\omega}\Lambda_{\lambda_1}}\otimes \Lambda_{\lambda_2})\subset \mathbb{N}[v^{-1}][\mathbf{B}({^{\omega}\Lambda_{\lambda_1}})\otimes \mathbf{B}(\Lambda_{\lambda_2})];$$
\item (Positivity of multiplication) for any $\dot{b},\dot{b}'\in \dot{\mathbf{B}}$, if one of them is spherical parabolic, then 
$$\dot{b}\dot{b}'\in \mathbb{N}[v,v^{-1}][\dot{\mathbf{B}}].$$
\end{enumerate}
\end{theorem}

The positivity of multiplication in above theorem implies the positivity property of $\mathbf{B}({^{\omega}\Lambda_{\lambda_1}}\otimes \Lambda_{\lambda_2})$ with respect to the action by any spherical parabolic $\dot{b}\in \dot{\mathbf{B}}$, and the positivity property of $\mathbf{B}(\Lambda_{\lambda_1}\otimes \Lambda_{\lambda_2})$ with respect to the action by any $\dot{b}\in \dot{\mathbf{B}}$ (see Theorem \ref{Positivity in general case}). As special cases, we obtain the following important consequences:

\begin{corollary}
For any dominant weights $\l_1,\l_2$ and $i \in I$, we have
$$F_i(\mathbf{B}({^{\omega}\Lambda_{\lambda_1}}\otimes \Lambda_{\lambda_2})),E_i(\mathbf{B}({^{\omega}\Lambda_{\lambda_1}}\otimes \Lambda_{\lambda_2}))\subset \mathbb{N}[v,v^{-1}][\mathbf{B}({^{\omega}\Lambda_{\lambda_1}}\otimes \Lambda_{\lambda_2})].$$
\end{corollary}

As an application (at $v=1$), the joint work of Xie and the second-named author \cite{HX} obtains interesting connections between the geometric total positivity on the double flag varieties with the positivity on the tensor product of simple modules.

\begin{corollary}
For any dominant weight $\lambda$ and $\dot{b}\in \dot{\mathbf{B}}$, we have 
$$\dot{b}(\mathbf{B}(\Lambda_{\lambda}))\subset \mathbb{N}[v,v^{-1}][\mathbf{B}(\Lambda_{\lambda})].$$
\end{corollary}

This result establishes the positivity property of the canonical basis of a single simple highest weight module with respect to the action by the canonical basis of the modified quantum group, which generalizes Lusztig's theorem \cite[Theorem 22.1.7]{Lusztig-1993}.

\begin{corollary}
For any dominant weight $\l_1,\l_2$ and $\dot{b} \in \dot{\mathbf{B}}$, we have $$\dot{b}(\mathbf{B}(\Lambda_{\lambda_1}\otimes \Lambda_{\lambda_2}))\subset \mathbb{N}[v,v^{-1}][\mathbf{B}(\Lambda_{\lambda_1}\otimes \Lambda_{\lambda_2})].$$
\end{corollary}

The case that $\dot{b}=E_i1_{\lambda_2-\lambda_1}$ or $F_i1_{\lambda_2-\lambda_1}$ for any $i\in I$ was established in the joint work of Lan and the first-named author in \cite{Fang-Lan-2023,Fang-Lan-2025}. 

In finite type, all canonical basis elements of $\dot{\mathbf{B}}$ are spherical parabolic, and so 

\begin{corollary}
If $\dot{\mathbf{U}}$ is of finite type, then
\begin{enumerate}
\item (Positivity of multiplication) for any $\dot{b},\dot{b}'\in \dot{\mathbf{B}}$, we have  
$$\dot{b}\dot{b}'\in \mathbb{N}[v,v^{-1}][\dot{\mathbf{B}}];$$
\item (Positivity of action) for any  dominant weights $\lambda_1,\l_2$ and $\dot{b}\in \dot{\mathbf{B}}$, we have 
$$\dot{b}(\mathbf{B}({^{\omega}\Lambda_{\lambda_1}}\otimes \Lambda_{\lambda_2}))\subset \mathbb{N}[v,v^{-1}][\mathbf{B}({^{\omega}\Lambda_{\lambda_1}}\otimes \Lambda_{\lambda_2})].$$
\end{enumerate}
\end{corollary}

This result verifies Lusztig's conjecture \cite[Conjecture 25.4.2]{Lusztig-1993} on the positivity of the multiplication in $\dot{\mathbf{U}}$ in finite type case. In this case, the conjecture has been established by Webster in \cite{Webster-2015} via the categorification approach.

Above positivity property of the canonical bases of the tensor products of two simple modules ${^{\omega}\Lambda_{\lambda_1}}\otimes \Lambda_{\lambda_2}$ or $\Lambda_{\lambda_1}\otimes \Lambda_{\lambda_2}$ are generalized to the arbitrary tensor product ${^{\omega}\Lambda_{\lambda_1}}\otimes \cdots \otimes {^{\omega}\Lambda_{\lambda_m}}\otimes \Lambda_{\lambda_{m+1}}\otimes \cdots \otimes \Lambda_{\lambda_{m+n}}$ in Theorem \ref{Positivity in general case}.

\subsection{Motivation and Strategy: the thickening philosophy}
Our approach is inspired by the theory of total positivity. Introduced by Lusztig for reductive groups and their flag varieties in \cite{Lusztig-1994} and later generalized to the Kac-Moody setting in \cite{Lusztig-2019}, \cite{Bao-He-2021}, this theory reveals a profound connection between geometric positivity and its algebraic counterpart. A highlight of this connection is that the totally nonnegative part of a flag variety consists precisely of those points whose corresponding projective lines in $\mathbb{P}(\Lambda_\lambda)$ are spanned by nonnegative linear combinations of the canonical basis $\mathbf{B}(\Lambda_{\lambda})$ (at $v=1$).

We posit an analogous connection between tensor products of modules and twisted products of flag varieties. The study of the totally nonnegative part of such twisted products was initiated by Webster–Yakimov \cite{WY-2007} for $GL_2$, with the general case resolved recently by Bao and the second-named author \cite{Bao-He-2022} using a thickening method: embedding the twisted product of flag varieties for $G$ into a single flag variety for a larger Kac–Moody group $\tilde{G}$. The key philosophy is that the enhanced symmetry of $\tilde{G}$ provides a powerful tool to unravel the intricate geometry of the twisted product (with, a priori, only the symmetry from $G$). 

In this paper, we apply this philosophy to the quantum group $\mathbf{U}$. The larger quantum group $\tilde{\mathbf{U}}$ arises from a variant of Nakajima's framed quiver construction \cite{Nakajima-1998,Nakajima-2001}, which is also considered in \cite{Li-2014,Fang-Lan-2025}. The thickening realization realizes the tensor product ${^\omega V_w(\lambda_1)}\otimes\Lambda_{\lambda_2}\subset {^\omega \Lambda_{\lambda_1}}\otimes\Lambda_{\lambda_2}$ as a subquotient of a Verma module of $\tilde{\mathbf{U}}$ for any Demazure submodule ${^\omega V_w(\lambda_1)}\subset \Lambda_{\lambda_1}$. This realization is compatible with the canonical basis, and provides the canonical basis of ${^\omega V_w(\lambda_1)}\otimes\Lambda_{\lambda_2}$ with the positivity property from the canonical basis of $\tilde{\mathbf{U}}^-$. By the fact that  ${}^\omega\Lambda_{\lambda_1}$ is the direct limit of its Demazure submodules, we obtain the desired positivity property of the canonical basis of ${^\omega \Lambda_{\lambda_1}}\otimes\Lambda_{\lambda_2}$.

\subsection{Structure of the paper}
The logical structure of our approach is illustrated by the following blueprint:
\begin{equation}\label{*}
\tag{$*$}\xymatrix@C=1cm@R=1cm{
\mathbf{B}((\mathbf{f}\theta_{\lambda_2}\mathbf{f})_{-w\lambda_1\odot\lambda_2})  \ar@<.5ex>[r] \ar@{^{(}->}[d] &\mathbf{B}(M_{-w\lambda_1}\otimes \Lambda_{\lambda_2}) \ar@<.5ex>[l] \ar@{^{(}->}[d] \ar@{->>}[r] &\mathbf{B}({^{\omega}V_w(\lambda_1)}\otimes \Lambda_{\lambda_2})\sqcup\{0\} \ar@{^{(}->}[d]\\
(\mathbf{f}\theta_{\lambda_2}\mathbf{f})_{-w\lambda_1\odot\lambda_2} \ar@<.5ex>[r]^-{\varphi_{-w\lambda_1,\lambda_2}} &M_{-w\lambda_1}\otimes \Lambda_{\lambda_2} \ar@<.5ex>[l]^-{\psi_{-w\lambda_1,\lambda_2}} \ar@{->>}[r]^-{a\otimes \mathrm{Id}} &{^{\omega}V_w(\lambda_1)}\otimes \Lambda_{\lambda_2}.}
\end{equation}
\begin{itemize}
\item \S\ref{sec:thick-module} establishes the key $\mathbf{U}$-module isomorphisms $\varphi_{-w\lambda_1,\lambda_2}$ and $\psi_{-w\lambda_1,\lambda_2}$ in the thickening realization for the tensor product ${^{\omega}V_w(\lambda_1)}\otimes \Lambda_{\lambda_2}\subset {^{\omega}\Lambda_{\lambda_1}}\otimes \Lambda_{\lambda_2}$.
\item \S\ref{Thickening realization of canonical basis} studies the compatibility of the thickening realization with the canonical basis and proves Theorem \ref{first main result} (Theorem \ref{thm:structure-trans}).
\item \S \ref{Structure constant of multiplication} applies the thickening realization to compare the multiplication in $\dot{\mathbf{U}}$ and the multiplication in $\tilde{\mathbf{U}}^-$, and proves Theorem \ref{second main result} (Theorem \ref{thm:structure-multiplication}).
\item \S \ref{Positivity of canonical basis of modified quantum group} verifies that the involution $\omega$ preserves the canonical basis $\dot{\mathbf{B}}$ which was a conjecture by Lusztig, and then applies Theorem \ref{thm:structure-multiplication} to establishes the positivity property of the multiplication in $\dot{\mathbf{U}}$ with respect to $\dot{\mathbf{B}}$ (Theorem \ref{thm:mult}).
\item \S\ref{Positivity of canonical basis of tensor product} uses the positivity property of multiplication in $\dot{\mathbf{U}}$ to establish the positivity property of the canonical bases of the tensor product (Theorem \ref{Positivity in general case}).
\end{itemize}

{\bf Acknowledgements} JF is partially supported by the New Cornerstone Science Foundation through the New Cornerstone Investigator Program awarded to XH and National Natural Science Foundation of China (Grant No. 12471030). XH is partially supported by the New Cornerstone Science Foundation through the New Cornerstone Investigator Program. We thank Yiqiang Li, George Lusztig and Weiqiang Wang for useful comments. JF would like to thank Jie Xiao and Yixin Lan for their long-term discussion and collaboration.

\section{Preliminary}\label{Preliminary}

\subsection{Cartan datum and root datum}

A symmetric Cartan datum $(I,\cdot)$ consists of a finite set $I$ and a symmetric bilinear form $\mathbb{Z}[I]\times \mathbb{Z}[I]\rightarrow \mathbb{Z}$ denoted by $(\nu,\nu')\mapsto \nu\cdot \nu'$ such that $i\cdot i=2$ for any $i\in I$, and $i\cdot j\leqslant 0$ for any $i\not=j$ in $I$.

A $Y$-regular root datum $(Y,X,\langle\,,\,\rangle,\ldots)$ of type $(I,\cdot)$ consists of two finitely generated free abelian groups $Y,X$, a perfect bilinear pairing $\langle\,,\,\rangle:Y\times X\rightarrow \mathbb{Z}$ and two embeddings $I\hookrightarrow Y, I\hookrightarrow X$ such that $\langle i,j\rangle=i\cdot j$ for any $i,j\in I$, and the image of the embedding $I\hookrightarrow Y$ is linearly independent in $Y$.

The embeddings $I\hookrightarrow Y$ and $I\hookrightarrow X$ induce the group homomorphisms $\mathbb{Z}[I]\rightarrow Y$ and $\mathbb{Z}[I]\rightarrow X$ respectively. We denote the images of $\nu\in \mathbb{Z}[I]$ under either of these homomorphisms by the same symbol $\nu$.

Throughout this paper, we fix a symmetric Cartan datum $(I,\cdot)$ and a $Y$-regular root datum $(Y,X,\langle\,,\,\rangle,\ldots)$ of type $(I,\cdot)$.

\subsection{The algebra $\mathbf{f}$}\label{The algebra f}

We follow \cite[Chapter 1]{Lusztig-1993}.

Let $\mathbb{Q}(v)$ be the field of rational functions in $v$ with coefficients in $\mathbb{Q}$. We denote
$$[n]=\frac{v^n-v^{-n}}{v-v^{-1}},\ [m]!=\prod^m_{k=1}[k]\in \mathbb{Q}(v)\ \textrm{for any}\ n\in\mathbb{Z},m\in \mathbb{N}.$$

For the symmetric Cartan datum $(I,\cdot)$, the algebra $\mathbf{f}$ is the $\mathbb{Q}(v)$-algebra with unit generated by $\theta_i$ for any $i\in I$, subject to the quantum Serre relations:
$$\sum_{n=0}^{1-i\cdot j}(-1)^n\theta_i^{(n)}\theta_j\theta_i^{(1-i\cdot j-n)}=0\ \textrm{for any $i\not=j$ in $I$},$$
where $\theta_i^{(n)}=\theta^n_i/[n]!$ for any $i\in I$ and $n\in \mathbb{N}$.

The algebra $\mathbf{f}=\bigoplus_{\nu\in \mathbb{N}[I]}\mathbf{f}_\nu$ is $\mathbb{N}[I]$-graded such that $\theta_i\in \mathbf{f}_i$ for any $i\in I$. For any homogeneous $x\in \mathbf{f}_\nu$, we denote $|x|=\nu$. 

The tensor product $\mathbf{f}\otimes \mathbf{f}$ over $\mathbb{Q}(v)$ is an algebra with the multiplication 
$$(x_1\otimes x_2)(y_1\otimes y_2)=v^{|x_2|\cdot |y_1|}x_1y_1\otimes x_2y_2\ \textrm{for any homogeneous}\ x_1,x_2,y_1,y_2\in \mathbf{f}.$$

The comultiplication of $\mathbf{f}$ is the algebra homomorphism $r:\mathbf{f}\rightarrow \mathbf{f}\otimes \mathbf{f}$ defined by $r(\theta_i)=\theta_i\otimes 1+1\otimes \theta_i$ for any $i\in I$.

\subsection{Quantum group}\label{Quantized enveloping algebra}

We refer to \cite[Chapter 3]{Lusztig-1993} for details.

For the $Y$-regular root datum $(Y,X,\langle\,,\,\rangle,\ldots)$ of type $(I,\cdot)$, the quantum group $\mathbf{U}$ is the $\mathbb{Q}(v)$-algebra with unit generated by $E_i,F_i$ and $K_\mu$ for any $i\in I$ and $\mu\in Y$, subject to the relations in \cite[\S 3.1.1]{Lusztig-1993}.

Let $\mathbf{U}^+$ and $\mathbf{U}^-$ be the subalgebras of $\mathbf{U}$ generated by $E_i$ and $F_i$ for any $i\in I$ respectively. Then there is an algebra automorphism $\omega:\mathbf{U}\rightarrow \mathbf{U}$ defined by $\omega(E_i)=F_i, \omega(F_i)=E_i, \omega(K_{\mu})=K_{-\mu}$ for any $i\in I$ and $\mu\in Y$.
There are algebra isomorphisms $\mathbf{f}\rightarrow \mathbf{U}^+, x\mapsto x^+$ and $\mathbf{f}\rightarrow \mathbf{U}^-, x\mapsto x^-$
such that $E_i=\theta_i^+$ and $F_i=\theta_i^-$ for any $i\in I$. Moreover, $\omega(x^+)=x^-$ and $\omega(x^-)=x^+$ for any $x\in \mathbf{f}$.

Let $\mathbf{U}^0$ be the subalgebra of $\mathbf{U}$ generated by $K_{\mu}$ for any $\mu\in Y$. We denote $\mathbf{U}(\mathfrak{b}^+)=\mathbf{U}^0\otimes \mathbf{U}^+$ and $\mathbf{U}(\mathfrak{b}^-)=\mathbf{U}^0\otimes \mathbf{U}^-$. They are subalgebras of $\mathbf{U}$.

The tensor product $\mathbf{U}\otimes \mathbf{U}$ over $\mathbb{Q}(v)$ is an algebra with the multiplication
$$(u_1\otimes u_2)(u'_1\otimes u'_2)=u_1u'_1\otimes u_2u'_2\ \textrm{for any}\ u_1,u_2,u'_1,u'_2\in \mathbf{U}.$$

The comultiplication of $\mathbf{U}$ is the algebra homomorphism $\Delta:\mathbf{U}\rightarrow \mathbf{U}\otimes \mathbf{U}$ defined by $\Delta(E_i)=E_i\otimes 1+K_i\otimes E_i, \Delta(F_i)=1\otimes F_i+F_i\otimes K_{-i}, \Delta(K_\mu)=K_\mu\otimes K_\mu$ for any $i\in I$ and $\mu\in Y$.

\subsection{Weight module}\label{U-module}

We refer to \cite[\S 3.4]{Lusztig-1993} for details.

A weight $\mathbf{U}$-module over $\mathbb{Q}(v)$ is a $\mathbf{U}$-module whose underlying vector space has a direct sum decomposition into weight spaces. All $\mathbf{U}$-modules considered in this paper are weight modules.

For any $\mathbf{U}$-module $M$, we define a new $\mathbf{U}$-module ${^{\omega}M}$. Its underlying vector space is $M$. For any $m \in M$, we denote by $^{\omega} m\in {^{\omega}M}$ the corresponding element. The module structure is defined by $u ({}^{\omega} m)={}^{\omega}(\omega(u) m)$ for any $u\in \mathbf{U}$ and $m\in M$. 

For any $\zeta\in X$, we denote by $M_{\zeta}$ the Verma module of $\mathbf{U}$ with the highest weight $\zeta$. Its underlying vector space is $\mathbf{f}$, and the module structure is defined by $E_i1=0, F_ix=\theta_ix, K_{\mu}x=v^{\langle\mu,\zeta-|x|\rangle}x$ for any $i\in I,\mu\in Y$ and homogeneous $x\in \mathbf{f}$. We denote by $v_{\zeta}\in M_{\zeta}$ the highest weight vector $1\in \mathbf{f}$.

Let $X^+=\{\l \in X; \langle i,\lambda\rangle \in \mathbb{N} \text{ for any } i\in I\}$ be the set of dominant weights. For any $\lambda\in X^+$, the simple integrable $\mathbf{U}$-module with the highest weight $\lambda$ is  
$$\Lambda_\lambda=M_{\lambda}/\sum_{i\in I}\mathbf{U}^-F_i^{\langle i,\lambda\rangle+1}v_{\lambda}.$$
We denote by $\pi_{\lambda}:M_{\lambda}\rightarrow \Lambda_{\lambda}$ the natural projection, and denote by $\eta_{\lambda}=\pi_{\lambda}(v_{\lambda})$ the highest weight vector.

The simple integrable $\mathbf{U}$-module with the lowest weight $-\lambda$ is ${^{\omega}\Lambda_{\lambda}}$. We denote by $\xi_{-\lambda}={}^{\omega}(\eta_\l)$ the lowest weight vector.

\subsection{Modified quantum group}
We refer to \cite[Chapter 23]{Lusztig-1993} for details.

The modified quantum group $\dot{\mathbf{U}}$ is a $\mathbb{Q}(v)$-algebra defined by Lusztig in \cite[\S 23.1.1]{Lusztig-1993}. We have 
$$\dot{\mathbf{U}}=\bigoplus_{\zeta_1,\zeta_2\in X}{_{\zeta_1}\mathbf{U}_{\zeta_2}},\ \textrm{where}\ {_{\zeta_1}\mathbf{U}_{\zeta_2}}=\mathbf{U}/(\sum_{\mu\in Y}(K_{\mu}-v^{\langle \mu,\zeta_1\rangle})\mathbf{U}+\sum_{\mu\in Y}\mathbf{U}(K_{\mu}-v^{\langle \mu,\zeta_2\rangle})).$$
In the meanwhile, $\dot{\mathbf{U}}$ is a free $(\mathbf{f}\otimes \mathbf{f}^{\mathrm{opp}})$-module with a basis $\{1_{\zeta};\zeta\in X\}$, where $1_\zeta$ is the image of $1\in \mathbf{U}$ under the natural projection $\mathbf{U}\rightarrow{_{\zeta}\mathbf{U}_{\zeta}}$ for any $\zeta\in X$. Comparing with the triangular decomposition $\mathbf{U}=\mathbf{U}^+\otimes \mathbf{U}^0\otimes \mathbf{U}^-$ of the quantum group, $\dot{\mathbf{U}}$ is a modified form of $\mathbf{U}$ in which $\mathbf{U}^0$ is replaced by $\bigoplus_{\zeta\in X}\mathbb{Q}(v)1_{\zeta}$.

A $\dot{\mathbf{U}}$-module $M$ over $\mathbb{Q}(v)$ is called {\it unital}, if for any $m\in M$, we have $1_{\zeta}m=0$ for all but finitely many $\zeta\in X$ and $\sum_{\zeta\in X}1_{\zeta}m=m$. Then giving a unital $\dot{\mathbf{U}}$-module is the same as giving a weight $\mathbf{U}$-module (see \cite[\S  23.1.4]{Lusztig-1993}). For this reason, $\dot{\mathbf{U}}$ is an algebra more appropriate than $\mathbf{U}$ for the study of weight modules.

\subsection{Canonical basis}\label{Canonical basis}

We refer to \cite[Chapter 14]{Lusztig-1993} for details.

Let $\mathcal{A}=\mathbb{Z}[v,v^{-1}]\subset \mathbb{Q}(v)$ be the ring of Laurent polynomials with coefficients in $\mathbb{Z}$. The integral form ${_{\mathcal{A}}\mathbf{f}}$ of $\mathbf{f}$ is the $\mathcal{A}$-subalgebra generated by $\theta_i^{(n)}$ for any $i\in I$ and $n\in \mathbb{N}$.

The bar-involution on $\mathbf{f}$ is the $\mathbb{Q}$-algebra involution $\mathbf{f}\rightarrow \mathbf{f},x\mapsto \overline{x}$ defined by $\overline{\theta_i}=\theta_i, \overline{v^n}=v^{-n}$ for any $i\in I$ and $n\in \mathbb{Z}$. 

By \cite[Proposition 1.2.3]{Lusztig-1993}, there is a unique non-degenerate symmetric bilinear form $(\,,\,)_{\mathbf{f}}:\mathbf{f}\times \mathbf{f}\rightarrow \mathbb{Q}(v)$ such that $(1,1)_{\mathbf{f}}=1,(\theta_i,\theta_j)_{\mathbf{f}}=\delta_{i,j}(1-v^{-2})^{-1}$ for any $i,j\in I$, and $(x_1x_2,x)_{\mathbf{f}}=(x_1\otimes x_2,r(x))_{\mathbf{f}\otimes \mathbf{f}}$ for any $x_1,x_2,x\in \mathbf{f}$, where the bilinear form on $\mathbf{f}\otimes \mathbf{f}$ is given by $(x_1\otimes x_2,x'_1\otimes x'_2)_{\mathbf{f}\otimes \mathbf{f}}=(x_1,x'_1)_{\mathbf{f}}(x_2,x'_2)_{\mathbf{f}}$ for any $x_1,x_2,x'_1,x'_2\in \mathbf{f}$.

The canonical basis $\mathbf{B}$ of $\mathbf{f}$ is defined in \cite[Definition 14.4.6]{Lusztig-1993}. It is a $\mathcal{A}$-basis of ${_{\mathcal{A}}\mathbf{f}}$ such that  $\overline{b}=b$ for any $b\in \mathbf{B}$, and $(b,b')_{\mathbf{f}}\in \delta_{b,b'}+v^{-1}\mathbb{Z}[[v^{-1}]]$ for any $b,b'\in \mathbf{B}$. The canonical basis $\mathbf{B}$ has the following positivity property.

\begin{theorem}[{\cite[Theorem 14.4.13]{Lusztig-1993}}]\label{14.4.13}
Let $b_1,b_2,b\in \mathbf{B}$. Then
$$b_1b_2\in \mathbb{N}[v,v^{-1}][\mathbf{B}],\ r(b)\in \mathbb{N}[v,v^{-1}][\mathbf{B}\otimes \mathbf{B}].$$
\end{theorem}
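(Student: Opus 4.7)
The plan is to follow Lusztig's geometric realization of $\mathbf{f}$ via perverse sheaves on varieties of quiver representations, where positivity becomes a consequence of the decomposition theorem. Fix an orientation $\Omega$ of the graph underlying $(I,\cdot)$. For each $\nu\in\mathbb{N}[I]$, let $E_\nu=\bigoplus_{h\in\Omega}\Hom(V_{s(h)},V_{t(h)})$ where $V_i$ is a fixed $\nu_i$-dimensional vector space, and let $G_\nu=\prod_i\GL(V_i)$ act on $E_\nu$ by change of basis. For every sequence $\bnu=(a_1i_1,\ldots,a_ni_n)\in\mathcal{V}_\nu$, one considers the proper smooth map $\pi_{\bnu}\colon\widetilde{F}_{\bnu}\to E_\nu$ from the variety of $\Omega$-compatible flags of type $\bnu$, and defines $\mathcal{L}_{\bnu}=(\pi_{\bnu})_{!}\mathbf{1}[\dim]$ (with the appropriate shift). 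One then introduces Lusztig's category $\mathcal{Q}_\nu$ generated by the $\mathcal{L}_{\bnu}$ under direct sum, shift and taking summands, and the Grothendieck group $\mathcal{K}_\nu=K(\mathcal{Q}_\nu)\otimes_{\mathbb{Z}[v,v^{-1}]}\mathbb{Q}(v)$, where $v$ acts by cohomological shift $[1]$.

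Next I would identify the algebraic and geometric pictures. The induction diagram $E_{\nu_1}\times E_{\nu_2}\xleftarrow{p_1} E'\xrightarrow{p_2} E''\xrightarrow{p_3} E_{\nu_1+\nu_2}$ classifying short exact sequences yields an induction functor $\mathrm{Ind}\colon \mathcal{Q}_{\nu_1}\times\mathcal{Q}_{\nu_2}\to\mathcal{Q}_{\nu_1+\nu_2}$ via $(A,B)\mapsto (p_3)_{!}(p_2)_{\flat}(p_1)^{*}(A\boxtimes B)$ (with appropriate shifts), and a restriction functor $\mathrm{Res}\colon \mathcal{Q}_\nu\to\bigoplus_{\nu_1+\nu_2=\nu}\mathcal{Q}_{\nu_1}\times\mathcal{Q}_{\nu_2}$ defined via the analogous pullback-pushforward along subrepresentation loci. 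One shows $\mathcal{K}:=\bigoplus_\nu\mathcal{K}_\nu$ is a $\mathbb{Q}(v)$-algebra under $\mathrm{Ind}$ and a coalgebra under $\mathrm{Res}$, and constructs an algebra isomorphism $\mathbf{f}\isoarrow\mathcal{K}$ sending $\theta_i^{(n)}$ to the class of the constant sheaf on $E_{ni}$ (with suitable shift); this matches the quantum Serre relations and the twisted coproduct formula for $r$.

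The heart of the argument is the identification of $\mathbf{B}$ with the set $\mathcal{P}_\nu$ of (isomorphism classes of) simple $G_\nu$-equivariant perverse sheaves that appear as summands of some $\mathcal{L}_{\bnu}$. One verifies that each $L\in\mathcal{P}_\nu$ is bar-invariant (using Verdier self-duality of $\mathcal{L}_{\bnu}$) and that the pairing condition $(L,L')_{\mathbf{f}}\in\delta_{L,L'}+v^{-1}\mathbb{Z}[[v^{-1}]]$ is forced by the almost-orthogonality of simple perverse sheaves in stalk cohomology. By the uniqueness of $\mathbf{B}$, this identifies $\mathbf{B}$ with $\bigsqcup_\nu\mathcal{P}_\nu$.

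With this in hand, positivity is immediate: for $b_1,b_2\in\mathbf{B}$ corresponding to simple perverse sheaves $L_1,L_2$, the product $b_1b_2$ is represented by $\mathrm{Ind}(L_1,L_2)$. Since $p_1$ is smooth, $p_2$ is a principal bundle, and $p_3$ is proper, the Beilinson--Bernstein--Deligne decomposition theorem gives
\[
\mathrm{Ind}(L_1,L_2)\cong\bigoplus_{L\in\mathcal{P}_{\nu_1+\nu_2},\ n\in\mathbb{Z}}L[n]^{\oplus m_{L,n}}\quad\text{with }m_{L,n}\in\mathbb{N},
\]
so $b_1b_2=\sum_{L,n}m_{L,n}v^n L$, yielding the first positivity statement. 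Similarly, $r(b)$ corresponds to $\mathrm{Res}(L)$; the relevant restriction map is the composition of a smooth pullback with a proper pushforward along the inclusion of a subrepresentation locus, and the decomposition theorem again expresses $\mathrm{Res}(L)$ as a nonnegative integer combination of shifts of simple perverse sheaves $L_1\boxtimes L_2$, giving $r(b)\in\mathbb{N}[v,v^{-1}][\mathbf{B}\otimes\mathbf{B}]$. The main obstacle is the third step: verifying that the class of simple perverse sheaves is stable under $\mathrm{Ind}$ and $\mathrm{Res}$ (so that one genuinely lands back in $\mathcal{Q}$), and matching them with the algebraically defined $\mathbf{B}$. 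This requires careful analysis of the semisimplicity of the induction/restriction functors, relying crucially on the properness of $p_3$ (for induction) and on suitable transversality arguments together with Deligne's purity for the restriction functor.
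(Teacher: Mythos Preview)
The paper does not prove this statement; it is quoted as a known result from \cite[Theorem 14.4.13]{Lusztig-1993} and used as a black box throughout. So there is no ``paper's own proof'' to compare against.

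Your sketch is the standard Lusztig argument and is essentially correct in outline: realize $\mathbf{f}$ geometrically via $G_\nu$-equivariant perverse sheaves on $E_\nu$, identify $\mathbf{B}$ with the simple perverse sheaves arising as summands of the $\mathcal{L}_{\bnu}$, and read off positivity of multiplication from the BBD decomposition theorem applied to the proper map $p_3$. One point worth sharpening: the positivity of the comultiplication is not a direct application of the decomposition theorem, since the restriction functor $\kappa_!\iota^*$ involves a pushforward along a map that is not proper. Lusztig's argument here goes through purity (the restriction of a pure complex along $\iota^*$ is mixed, and $\kappa_!$ along a vector bundle preserves purity up to shift), together with Gabber's decomposition theorem for pure complexes, or alternatively via the Fourier--Deligne transform which interchanges induction and restriction. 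Your phrase ``suitable transversality arguments together with Deligne's purity'' gestures at this, but the mechanism is different enough from the induction case that it deserves to be stated explicitly rather than folded into the same sentence.
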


Let $\lambda\in X^+$. The integral form of $\Lambda_{\lambda}$ is ${_{\mathcal{A}}\Lambda_{\lambda}}=\{x^-\eta_{\lambda};x\in {_{\mathcal{A}}\mathbf{f}}\}$. 

The bar-involution on $\mathbf{U}$ is the $\mathbb{Q}$-algebra involution $\mathbf{U}\rightarrow \mathbf{U},u\mapsto \overline{u}$ defined by $\overline{E_i}=E_i, \overline{F_i}=F_i, \overline{K_{\mu}}=K_{-\mu}, \overline{v^n}=v^{-n}$ for any $i\in I, \mu\in Y$ and $n\in \mathbb{Z}$. The bar-involution on $\Lambda_{\lambda}$ is the $\mathbb{Q}$-linear involution $\Lambda_{\lambda}\rightarrow \Lambda_{\lambda},m\mapsto \overline{m}$ defined by $\overline{u\eta_{\lambda}}=\overline{u}\eta_{\lambda}$ for any $u\in \mathbf{U}$. Then $\overline{um}=\bar{u}\bar{m}$ for any $u\in \mathbf{U}$ and $m\in \Lambda_{\lambda}$.

By \cite[Proposition 19.1.2]{Lusztig-1993}, there is a
unique symmetric bilinear form $(\,,\,)_{\Lambda_{\lambda}}:\Lambda_{\lambda}\times \Lambda_{\lambda}\rightarrow \mathbb{Q}(v)$ such that $(\eta_{\lambda},\eta_{\lambda})_{\Lambda_{\lambda}}=1$, and $(um,m')_{\Lambda_{\lambda}}\!\!=\!(m,\rho(u)m')_{\Lambda_{\lambda}}\!$ for any $u\in \mathbf{U}$ and $m,m'\in\Lambda_{\lambda}$, where $\rho:\mathbf{U}\rightarrow \mathbf{U}^{\mathrm{opp}}$ is the algebra isomorphism defined in \cite[\S 19.1.1]{Lusztig-1993}.

The canonical basis $\mathbf{B}(\Lambda_{\lambda})$ of $\Lambda_{\lambda}$ is defined in \cite[Definition 14.4.12]{Lusztig-1993}. By \cite[Theorem 14.4.11]{Lusztig-1993}, $\mathbf{B}(\Lambda_{\lambda})=\{b^-\eta_{\lambda}; b\in \mathbf{B}\}\setminus \{0\}$. It is a $\mathcal{A}$-basis of ${_{\mathcal{A}}\Lambda_{\lambda}}$ such that $\overline{b}=b$ for any $b\in \mathbf{B}(\Lambda_{\lambda})$, and $(b,b')_{\Lambda_{\lambda}}\in \delta_{b,b'}+v^{-1}\mathbb{Z}[v^{-1}]$ for any $b,b'\in \mathbf{B}(\Lambda_{\lambda})$. The canonical basis $\mathbf{B}(\Lambda_{\lambda})$ has the following positivity property.

\begin{theorem}[{\cite[Theorem 22.1.7]{Lusztig-1993}}]\label{22.1.7}
Let $\lambda\in X^+$ and $i\in I$. Then 
$$F_i(\mathbf{B}(\Lambda_{\lambda})), E_i(\mathbf{B}(\Lambda_{\lambda}))\subset \mathbb{N}[v,v^{-1}][\mathbf{B}(\Lambda_{\lambda})].$$
\end{theorem}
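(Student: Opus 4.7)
The plan is to handle the two assertions separately, as the $F_i$ case reduces to the positivity of multiplication in $\mathbf{f}$ (Theorem \ref{14.4.13}) while the $E_i$ case requires additional input.

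For the $F_i$ part, I would use directly the description of $\mathbf{B}(\Lambda_\lambda)$ as the set of nonzero elements in $\{b^-\eta_\lambda ; b\in \mathbf{B}\}$. Given any $b \in \mathbf{B}$ with $b^-\eta_\lambda \neq 0$, the action
$$F_i(b^-\eta_\lambda) = \theta_i^- b^- \eta_\lambda = (\theta_i b)^-\eta_\lambda$$
combined with Theorem \ref{14.4.13} gives $\theta_i b = \sum_{b'\in \mathbf{B}} c_{b',b}^i\, b'$ with coefficients $c_{b',b}^i \in \mathbb{N}[v,v^{-1}]$. Projecting to $\Lambda_\lambda$, the terms with $b'^-\eta_\lambda = 0$ simply drop out, and the remaining $b'^-\eta_\lambda$ lie in $\mathbf{B}(\Lambda_\lambda)$, yielding the claim. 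The subtle point here is only to check that this projection does not create any cancellation between positive and negative coefficients — and it does not, since all $c_{b',b}^i$ are already nonnegative.

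The $E_i$ part is the main obstacle, since there is no analogous formula expressing $E_i$ as left multiplication on $\mathbf{f}$. My strategy would be to invoke a geometric realization of $\mathbf{B}(\Lambda_\lambda)$ as (shifts of) simple perverse sheaves on a suitable variety parameterizing flagged or framed quiver representations (for instance Nakajima's quiver varieties, or Lusztig's and Zheng's framed-representation construction mentioned in the introduction). Under such a realization the action of $E_i$ is induced by the proper pushforward along a natural Hecke-type correspondence, whose fibers parameterize choices of a codimension-$1$ $i$-subrepresentation. By the Beilinson–Bernstein–Deligne decomposition theorem, the pushforward of a simple perverse sheaf along a proper morphism is a direct sum of shifts of simple perverse sheaves, with multiplicities in $\mathbb{N}[v,v^{-1}]$ (via the Frobenius trace / cohomology grading interpretation of $v$). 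Translating this back to the algebraic side gives exactly $E_i\mathbf{B}(\Lambda_\lambda) \subset \mathbb{N}[v,v^{-1}][\mathbf{B}(\Lambda_\lambda)]$.

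The main technical obstacle is setting up this geometric picture rigorously: one must identify the simple perverse sheaves on the relevant variety with the canonical basis of $\Lambda_\lambda$ (matching the characterization via bar-invariance and almost orthonormality from Section \ref{Canonical basis}), and then verify that the Hecke correspondence encoding $E_i$ is proper so that the decomposition theorem applies. An alternative, purely categorical route would be to use the Kang–Kashiwara categorification via cyclotomic KLR algebras, where $\mathbf{B}(\Lambda_\lambda)$ corresponds to indecomposable self-dual graded projectives and $E_i$ corresponds to an exact functor sending projectives to projectives, so that decomposition multiplicities are automatically in $\mathbb{N}[v,v^{-1}]$. Either route ultimately replaces the algebraic computation by a statement about positivity of multiplicities in a categorical or geometric decomposition, which is the conceptual source of positivity.
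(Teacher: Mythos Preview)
Your $F_i$ argument is correct and matches exactly what the paper does (see the opening of the proof of Corollary~\ref{Positivity of b+ action}). The paper, however, does not give its own proof of Theorem~\ref{22.1.7}; it simply imports Lusztig's result. Your proposed $E_i$ argument via a geometric realization (perverse sheaves on framed quiver varieties, decomposition theorem for the Hecke correspondence) is essentially a sketch of Lusztig's original proof, so in that sense you are reproducing the cited source rather than deviating from it.

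What is worth pointing out is that the paper later develops a genuinely different, algebraic route to the $E_i$ case (in fact to the stronger statement $b^+(\mathbf{B}(\Lambda_\lambda))\subset\mathbb{N}[v,v^{-1}][\mathbf{B}(\Lambda_\lambda)]$ for all $b\in\mathbf{B}$, Corollary~\ref{Positivity of b+ action}). The idea is: any canonical basis element $b'^-\eta_\lambda$ lies in some Demazure module $V_w(\lambda)$, and Proposition~\ref{action map is a based map*} shows that $\mathbf{B}(V_w(\lambda))=\{b^+\eta_{w\lambda};\,b\in\mathbf{B}\}\setminus\{0\}$. So one can write $b'^-\eta_\lambda=b''^+\eta_{w\lambda}$ for some $b''\in\mathbf{B}$, and then $E_i(b'^-\eta_\lambda)=(\theta_i b'')^+\eta_{w\lambda}$ reduces again to the multiplication positivity of Theorem~\ref{14.4.13}. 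This avoids the geometric machinery entirely, at the cost of needing the based-module-homomorphism input from \cite{Kashiwara-1994} and \cite{Bao-Wang-2021} used in Proposition~\ref{action map is a based map*}. Your geometric approach is self-contained but heavy; the paper's Demazure approach is lighter once those based-module results are available, and it immediately gives the stronger $b^+$ statement rather than just $E_i$.
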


Similarly, the simple lowest weight module ${^{\omega}\Lambda_{\lambda}}$ has the integral form, the bar-involution, and the symmetric bilinear form which satisfy similar property. The canonical basis $\mathbf{B}({^{\omega}\Lambda_{\lambda}})=\{b^+\xi_{-\lambda}; b\in \mathbf{B}\}\setminus \{0\}$ of ${^{\omega}\Lambda_{\lambda}}$ satisfies similar property.

\subsection{Based $\mathbf{U}$-module}\label{Based module}

A based $\mathbf{U}$-module $(M,B)$ is an integrable $\mathbf{U}$-module $M$ with a basis $B$ satisfying the conditions in \cite[\S 2.1]{Bao-Wang-2016} which is a generalization of \cite[\S 27.1.2]{Lusztig-1993}. One condition is that the $\mathbb{Q}$-linear involution $M\rightarrow M,m\mapsto \overline{m}$ defined by $\overline{b}=b,\overline{v^n}=v^{-n}$ for any $b\in B$ and $n\in \mathbb{Z}$, is compatible with the bar-involution and the action of $\mathbf{U}$, that is, 
\begin{equation}\label{27.1.2c}
\overline{um}=\bar{u}\bar{m}\ \textrm{for any}\ u\in \mathbf{U}, m\in M.   
\end{equation}
For any based $\mathbf{U}$-modules $(M,B)$ and $(M',B')$, a based $\mathbf{U}$-module homomorphism from $(M,B)$ to $(M',B')$ is a $\mathbf{U}$-module homomorphism $f:M\rightarrow M'$ such that $f(B)\subset B'\sqcup\{0\}$, and $B\cap \mathrm{ker}\,f$ is a basis of $\mathrm{ker}\,f$.

For any $\lambda\in X^+$, there are based $\mathbf{U}$-modules $(\Lambda_{\lambda},\mathbf{B}(\Lambda_{\lambda}))$ and $({^{\omega}\Lambda_{\lambda}},\mathbf{B}({^{\omega}\Lambda_{\lambda}}))$ (see \cite[\S  27.1.4]{Lusztig-1993}).

\subsection{Canonical basis of tensor product}\label{Canonical basis of tensor product}
For any $\mathbf{U}$-modules $M_1$ and $M_2$, the tensor product $M_1\otimes M_2$ over $\mathbb{Q}(v)$ is naturally a $\mathbf{U}\otimes \mathbf{U}$-module. We regard it as a $\mathbf{U}$-module via the comultiplication $\Delta:\mathbf{U}\rightarrow \mathbf{U}\otimes \mathbf{U}$.

For subsets $B_1\subset M_1,B_2\subset M_2$, we denote $B_1 \otimes B_2=\{b_1 \otimes b_2; b_1 \in B_1, b_2 \in B_2\}$. If the elements $b_1\diamondsuit b_2\in M_1\otimes M_2$ have been defined for any $b_1\in B_1$ and $b_2\in B_2$, then we simply denote $B_1\diamondsuit B_2=\{b_1\diamondsuit b_2;b_1\in B_1,b_2\in B_2\}$.
 
Let $\lambda_1,\lambda_2\in X^+$. The tensor products ${^{\omega}\Lambda_{\lambda_1}}\otimes \Lambda_{\lambda_2}$ and $\Lambda_{\lambda_1}\otimes \Lambda_{\lambda_2}$ have bases $\mathbf{B}({^{\omega}\Lambda_{\lambda_1}})\otimes \mathbf{B}(\Lambda_{\lambda_2})$ and $\mathbf{B}(\Lambda_{\lambda_1})\otimes \mathbf{B}(\Lambda_{\lambda_2})$ respectively. The bar-involutions on them are defined by
$\overline{m_1\otimes m_2}=\overline{m_1}\otimes \overline{m_2}$ for any $m_1\in {^{\omega}\Lambda_{\lambda_1}}$ or $\Lambda_{\lambda_1}$ and $m_2\in \Lambda_{\lambda_2}$. Note that in general $\Delta(\overline{u})\not=\overline{\Delta(u)}$ for $u \in \mathbf{U}$, and so these bar-involutions do not satisfy \eqref{27.1.2c}. Hence $({^{\omega}\Lambda_{\lambda_1}}\otimes \Lambda_{\lambda_2},\mathbf{B}({^{\omega}\Lambda_{\lambda_1}})\otimes \mathbf{B}(\Lambda_{\lambda_2}))$ and $(\Lambda_{\lambda_1}\otimes \Lambda_{\lambda_2},\mathbf{B}(\Lambda_{\lambda_1})\otimes \mathbf{B}(\Lambda_{\lambda_2}))$
are not based $\mathbf{U}$-modules.

Lusztig defined a new $\mathbb{Q}$-linear involution on the tensor products satisfying \eqref{27.1.2c}. By \cite[\S  14.2.5]{Lusztig-1993}, we have $\mathbf{B}=\bigsqcup_{\nu\in \mathbb{N}[I]}\mathbf{B}_{\nu}$, where $\mathbf{B}_{\nu}=\mathbf{B}\cap \mathbf{f}_{\nu}$ is a basis of $\mathbf{f}_{\nu}$. For any $\nu=\sum_{i\in I}\nu_ii 
\in \mathbb{N}[I]$, we denote $\mathrm{tr}\,\nu=\sum_{i\in I}\nu_i\in \mathbb{N}$. Let $\{b^*; b\in \mathbf{B}_{\nu}\}$ be the basis of $\mathbf{f}_{\nu}$ dual to the basis $\mathbf{B}_{\nu}$ with respect to $(\,,\,)_{\mathbf{f}}$. The quasi-$\mathcal{R}$-matrices on ${^{\omega}\Lambda_{\lambda_1}}\otimes \Lambda_{\lambda_2}$ and $\Lambda_{\lambda_1}\otimes \Lambda_{\lambda_2}$ are the linear transformations on them defined by 
$$\Theta(m_1\otimes m_2)=\sum_{\nu\in \mathbb{N}[I]}(-v)^{\mathrm{tr}\,\nu}\sum_{b\in \mathbf{B}_{\nu}}b^-m_1\otimes b^{*+}m_2\ \textrm{for any}\ m_1\in {^{\omega}\Lambda_{\lambda_1}}\ \textrm{or}\ \Lambda_{\lambda_1}, m_2\in \Lambda_{\lambda_2}.$$
Note that there are always only finitely many non-zero terms in the summation. By \cite[Lemma 24.1.2]{Lusztig-1993}, $\Delta(\overline{u})\Theta(m_1\otimes m_2)=\Theta\overline{\Delta(u)}(m_1\otimes m_2)$ for any $u\in \mathbf{U}$, $m_1\in {^{\omega}\Lambda_{\lambda_1}}$ or $\Lambda_{\lambda_1}$ and $m_2\in \Lambda_{\lambda_2}$. The quasi-$\mathcal{R}$-matrix is an intertwiner between the composition of the comultiplication and the bar-involution taken in different orders, which is similar to the universal $\mathcal{R}$-matrix introduced by Drinfeld as an intertwiner between the comultiplication and its transpose. The $\Psi$-involutions on ${^{\omega}\Lambda_{\lambda_1}}\otimes \Lambda_{\lambda_2}$ and $\Lambda_{\lambda_1}\otimes \Lambda_{\lambda_2}$ are the $\mathbb{Q}$-linear transformations on them defined by
\begin{equation}\label{Phi-involution}
\Psi(m_1\otimes m_2)=\Theta(\overline{m_1}\otimes \overline{m_2})\ \textrm{for any}\ m_1\in {^{\omega}\Lambda_{\lambda_1}}\ \textrm{or}\ \Lambda_{\lambda_1}, m_2\in \Lambda_{\lambda_2}.
\end{equation}
Then $\Psi(u(m_1\otimes m_2))=\overline{u}\Psi(m_1\otimes m_2)$ for any $u\in \mathbf{U}$, $m_1\in {^{\omega}\Lambda_{\lambda_1}}$ or $\Lambda_{\lambda_1}, m_2\in \Lambda_{\lambda_2}$, that is, the $\Psi$-involutions satisfy \eqref{27.1.2c}.

The canonical basis of the tensor product ${^{\omega}\Lambda_{\lambda_1}}\otimes \Lambda_{\lambda_2}$ was established by Lusztig in \cite[Theorem 24.3.3]{Lusztig-1993}.

\begin{theorem}\label{24.3.3} 
Let $\l_1, \l_2 \in X^+$. Then for any $b_1\in \mathbf{B}({^{\omega}\Lambda_{\lambda_1}})$ and $b_2\in \mathbf{B}(\Lambda_{\lambda_2})$, there exists a unique $b_1 \diamondsuit b_2 \in \mathbb{Z}[v^{-1}][\mathbf{B}({^{\omega}\Lambda_{\lambda_1}})\otimes \mathbf{B}(\Lambda_{\lambda_2})]$ such that
\begin{align*}
\Psi(b_1 \diamondsuit b_2)=b_1 \diamondsuit b_2 \textrm{ and } b_1 \diamondsuit b_2-b_1 \otimes b_2 \in v^{-1}\mathbb{Z}[v^{-1}][\mathbf{B}({^{\omega}\Lambda_{\lambda_1}})\otimes \mathbf{B}(\Lambda_{\lambda_2})].
\end{align*}
Moreover, the set $\mathbf{B}({^{\omega}\Lambda_{\lambda_1}}\otimes \Lambda_{\lambda_2})=:\mathbf{B}({^{\omega}\Lambda_{\lambda_1}}) \diamondsuit \mathbf{B}(\Lambda_{\lambda_2})$ is a basis of ${^{\omega}\Lambda_{\lambda_1}}\otimes \Lambda_{\lambda_2}$. The transition matrix from $\mathbf{B}(^{\omega}\Lambda_{\lambda_1})\otimes  \mathbf{B}(\Lambda_{\lambda_2})$ to $\mathbf{B}({^{\omega}\Lambda_{\lambda_1}}\otimes \Lambda_{\lambda_2})$ is unitriangular with off-diagonal entries in $v^{-1}\mathbb{Z}[v^{-1}]$.
\end{theorem}

The basis $\mathbf{B}({^{\omega}\Lambda_{\lambda_1}}\otimes \Lambda_{\lambda_2})$ is the canonical basis of the tensor product ${^{\omega}\Lambda_{\lambda_1}}\otimes \Lambda_{\lambda_2}$. Moreover, $({^{\omega}\Lambda_{\lambda_1}}\otimes \Lambda_{\lambda_2},\mathbf{B}({^{\omega}\Lambda_{\lambda_1}}\otimes \Lambda_{\lambda_2}))$ is a based $\mathbf{U}$-module.

The canonical basis of the tensor product $\Lambda_{\lambda_1}\otimes \Lambda_{\lambda_2}$ was established by Bao and Wang in \cite[Theorem 2.7]{Bao-Wang-2016}.
\begin{theorem}\label{Bao-Wang-2.7}
Let $\l_1, \l_2 \in X^+$. Then for any $b_1\in \mathbf{B}(\Lambda_{\lambda_1})$ and $b_2\in \mathbf{B}(\Lambda_{\lambda_2})$, there exists a unique $b_1\diamondsuit b_2\in \mathbb{Z}[v^{-1}][\mathbf{B}(\Lambda_{\lambda_1})\otimes \mathbf{B}(\Lambda_{\lambda_2})]$ such that
\begin{align*}
\Psi(b_1 \diamondsuit b_2)=b_1 \diamondsuit b_2 \textrm{ and } b_1 \diamondsuit b_2-b_1 \otimes b_2 \in v^{-1}\mathbb{Z}[v^{-1}][\mathbf{B}(\Lambda_{\lambda_1})\otimes \mathbf{B}(\Lambda_{\lambda_2})].
\end{align*}
Moreover, the set $\mathbf{B}(\Lambda_{\lambda_1}\otimes \Lambda_{\lambda_2}):=\mathbf{B}({\Lambda_{\lambda_1}}) \diamondsuit \mathbf{B}(\Lambda_{\lambda_2})$ is a basis of $\Lambda_{\lambda_1}\otimes \Lambda_{\lambda_2}$. The transition matrix from $\mathbf{B}(\Lambda_{\lambda_1})\otimes  \mathbf{B}(\Lambda_{\lambda_2})$ to $\mathbf{B}(\Lambda_{\lambda_1}\otimes \Lambda_{\lambda_2})$ is unitriangular with off-diagonal entries in $v^{-1}\mathbb{Z}[v^{-1}]$
\end{theorem}

The basis $\mathbf{B}(\Lambda_{\lambda_1}\otimes \Lambda_{\lambda_2})$ is the canonical basis of the tensor product ${\Lambda_{\lambda_1}}\otimes \Lambda_{\lambda_2}$. Moreover, $({\Lambda_{\lambda_1}}\otimes \Lambda_{\lambda_2},\mathbf{B}({\Lambda_{\lambda_1}}\otimes \Lambda_{\lambda_2}))$ is a based $\mathbf{U}$-module.

\subsection{Canonical basis of modified quantum group}

The integral form ${_{\mathcal{A}}\dot{\mathbf{U}}}$ of $\dot{\mathbf{U}}$ is the $\mathcal{A}$-subalgebra generated by $E_i^{(n)}1_{\zeta},F_i^{(n)}1_{\zeta}$ for any $i\in I,n\in \mathbb{N}$ and $\zeta\in X$.
 
The canonical basis $\mathbf{B}({^{\omega}\Lambda_{\lambda_1}}\otimes \Lambda_{\lambda_2})$ of the tensor product ${^{\omega}\Lambda_{\lambda_1}}\otimes \Lambda_{\lambda_2}$ for any $\lambda_1,\lambda_2\in X^+$ has very nice stability property in the sense of \cite[\S  25.1]{Lusztig-1993}. The canonical basis of $\dot{\mathbf{U}}$ was established by Lusztig in \cite[Theorem 25.2.1]{Lusztig-1993} by gluing $\mathbf{B}({^{\omega}\Lambda_{\lambda_1}}\otimes \Lambda_{\lambda_2})$ for any $\lambda_1,\lambda_2\in X^+$.

\begin{theorem}\label{25.2.1}
Let $\zeta\in X$ and $b_1,b_2\in \mathbf{B}$. Then 
\begin{enumerate}
\item there exists a unique $b_1\diamondsuit_{\zeta}b_2\in {_{\mathcal{A}}\dot{\mathbf{U}}1_{\zeta}}$ such that 
$$(b_1\diamondsuit_{\zeta}b_2)(\xi_{-\lambda_1}\otimes \eta_{\lambda_2})=b_1^+\xi_{-\lambda_1}\diamondsuit b_2^-\eta_{\lambda_2}\in \mathbf{B}({^{\omega}\Lambda_{\lambda_1}}\otimes \Lambda_{\lambda_2})$$
for any $\lambda_1,\lambda_2\in X^+$ such that $\zeta=\lambda_2-\lambda_1$ and $b_1^+\xi_{-\lambda_1}\neq0, b_2^-\eta_{\lambda_2}\neq0$;
\item we have $(b_1\diamondsuit_{\zeta}b_2)(\xi_{-\lambda_1}\otimes \eta_{\lambda_2})=0$
for any $\lambda_1,\lambda_2\in X^+$ such that $\zeta=\lambda_2-\lambda_1$ and either $b_1^+\xi_{-\lambda_1}=0$ or $b_2^-\eta_{\lambda_2}=0$;
\item the set $\{b_1\diamondsuit_{\zeta}b_2;\zeta\in X,b_1,b_2\in \mathbf{B}\}$ is a basis of $\dot{\mathbf{U}}$.
\end{enumerate}
\end{theorem}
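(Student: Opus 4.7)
The plan is to follow Lusztig's stability strategy: construct $b_1\diamondsuit_\zeta b_2$ as a coherent limit of the canonical basis elements $b_1^+\xi_{-\lambda_1}\diamondsuit b_2^-\eta_{\lambda_2}\in\mathbf{B}({}^\omega\Lambda_{\lambda_1}\otimes\Lambda_{\lambda_2})$ over all pairs $\lambda_1,\lambda_2\in X^+$ satisfying $\lambda_2-\lambda_1=\zeta$, exploiting the fact that these elements vary in a highly rigid way as $\lambda_1,\lambda_2$ grow.

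First I would establish cyclicity: for any $\lambda_1,\lambda_2\in X^+$, the weight-$\zeta$ vector $\xi_{-\lambda_1}\otimes\eta_{\lambda_2}$ generates ${}^\omega\Lambda_{\lambda_1}\otimes\Lambda_{\lambda_2}$ as a $\mathbf{U}$-module. The key observations are that $F_i\xi_{-\lambda_1}=0$ (since under the $\omega$-twist $\xi_{-\lambda_1}$ corresponds to the highest weight vector $\eta_{\lambda_1}$ of $\Lambda_{\lambda_1}$) and $E_i\eta_{\lambda_2}=0$, which make $\Delta(F_i)=1\otimes F_i+F_i\otimes K_{-i}$ and $\Delta(E_i)=E_i\otimes 1+K_i\otimes E_i$ act cleanly enough to run a weight-induction argument. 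Cyclicity gives a surjection $\dot{\mathbf{U}}1_\zeta\twoheadrightarrow{}^\omega\Lambda_{\lambda_1}\otimes\Lambda_{\lambda_2}$, $u1_\zeta\mapsto u(\xi_{-\lambda_1}\otimes\eta_{\lambda_2})$, so by choosing preimages and expanding in the triangular-decomposition basis $\{b_1'^+1_\zeta b_2'^-\}$ of $\dot{\mathbf{U}}1_\zeta$ we obtain an expression $\sum_{b_1',b_2'}c_{b_1',b_2'}(\lambda_1,\lambda_2)\,b_1'^+1_\zeta b_2'^-$ whose action on $\xi_{-\lambda_1}\otimes\eta_{\lambda_2}$ is the prescribed canonical basis element of Theorem \ref{24.3.3} (or $0$ in the degenerate case).

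The crux is the stability step: for each fixed $(b_1',b_2')$, the coefficient $c_{b_1',b_2'}(\lambda_1,\lambda_2)$ should be independent of $\lambda_1,\lambda_2$ once they are sufficiently dominant, and should take values in $\mathcal{A}$. The argument combines the $\Psi$-invariance and the $v^{-1}\mathcal{L}_{-\lambda_1,\lambda_2}$-congruence defining $\diamondsuit$ (Theorem \ref{24.3.3}) with an analysis of the action of the quasi-$\mathcal{R}$-matrix $\Theta$ on $\xi_{-\lambda_1}\otimes\eta_{\lambda_2}$: the dependence on $\lambda_1,\lambda_2$ enters only through weight pairings $v^{\langle\mu,\lambda_i\rangle}$ produced by $K_\mu$'s, and for each individual summand in $\Theta$ these pairings cancel out of the canonical-basis construction once $\lambda_1,\lambda_2$ are large enough relative to the weight of the summand. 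One then sets $b_1\diamondsuit_\zeta b_2:=\sum_{b_1',b_2'}c_{b_1',b_2'}\,b_1'^+1_\zeta b_2'^-\in{}_{\mathcal{A}}\dot{\mathbf{U}}1_\zeta$; this verifies (1), and extending the stability argument into the degenerate range (where some $b_i^{\pm}\cdot$ target vanishes) yields (2).

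Uniqueness in (1) and linear independence in (3) both reduce to the separation statement that an element of $\dot{\mathbf{U}}1_\zeta$ vanishes iff it acts as $0$ on $\xi_{-\lambda_1}\otimes\eta_{\lambda_2}$ for all sufficiently dominant pairs — which follows from cyclicity together with the fact that every integrable weight module is a quotient of a direct sum of such tensor products. Spanning in (3) then follows because the transition matrix from $\{b_1\diamondsuit_\zeta b_2\}$ to $\{b_1^+1_\zeta b_2^-\}$ is unitriangular with $\mathcal{A}$-entries with respect to the partial order induced by the $v^{-1}$-congruence in the stable range. The principal technical obstacle is the stability of the coefficients $c_{b_1',b_2'}(\lambda_1,\lambda_2)$: combinatorially controlling the $\Theta$-sum and verifying weight-independence in the stable range is the heart of the matter, and is essentially the content of \cite[\S 25.1]{Lusztig-1993}.
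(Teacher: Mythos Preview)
The paper does not prove this theorem: it is stated in \S1.9 as a citation of \cite[Theorem 25.2.1]{Lusztig-1993}, with no proof given. Your sketch is a faithful outline of Lusztig's original argument from \cite[Chapter 25]{Lusztig-1993}, so there is nothing to compare against in the present paper.
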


The set $\dot{\mathbf{B}}:=\{b_1\diamondsuit_{\zeta}b_2;\zeta\in X,b_1,b_2\in \mathbf{B}\}$ is the canonical basis of $\dot{\mathbf{U}}$. 

By \cite[Proposition 25.2.6]{Lusztig-1993}, $b^-1_{\zeta}=1\diamondsuit_{\zeta}b,\ b^+1_{\zeta}=b\diamondsuit_{\zeta}1\in \dot{\mathbf{B}}$ for any $\zeta\in X$ and $b\in \mathbf{B}$. Hence $\dot{\mathbf{B}}$ is a generalization of $\mathbf{B}$.

\subsection{Canonical basis of Demazure module}

For the symmetric Cartan datum $(I,\cdot)$, the Weyl group $W$ is generated by $s_i$ for any $i\in I$, subject to the relations in \cite[\S  2.1.1]{Lusztig-1993}. The group $W$ acts on $Y$ and $X$ (see \cite[\S  2.2.6]{Lusztig-1993}).

Let $\lambda\in X^+$ and $w\in W$. By \cite[Proposition 5.2.7]{Lusztig-1993}, the $w\lambda$-weight subspace $\Lambda_{\lambda}$ is one-dimensional. Let $\eta_{w\lambda}\in \mathbf{B}(\Lambda_{\lambda})$ be the unique canonical basis element of $\Lambda_{\lambda}$ of weight $w\lambda$. The {\it Demazure module} $V_{w}(\lambda)$ is the $\mathbf{U}(\mathfrak{b}^+)$-submodule of $\Lambda_{\lambda}$ generated by $\eta_{w\lambda}$. 

By \cite[Proposition 3.2.3]{Kashiwara-1993}, the set $\mathbf{B}(V_{w}(\lambda)):=\mathbf{B}(\L_\l)\cap V_w(\l)$ is a basis of $V_{w}(\lambda)$. We call it the {\it canonical basis} of $V_w(\l)$. 

By \cite[Corollary 3.2.2]{Kashiwara-1993}, we have $V_{w}(\lambda)\subset V_{w'}(\lambda)$ for any $w,w'\in W$ with $w\leqslant w'$ under the Bruhat order. Let $\ast$ be the Demazure product on $W$ (see \cite[\S 1.2]{He-Nie-2024}). Then $w, w' \le w \ast w'$ for any $w, w'\in W$. Thus $(W, \le)$ is a directed set, and $\{V_w(\lambda);w\in W\}$ forms a direct system. We have $\varinjlim_{w\in W} V_{w}(\lambda)=\Lambda_{\lambda}$, and so $\mathbf{B}(\Lambda_{\lambda})=\bigcup_{w\in W}\mathbf{B}(V_{w}(\lambda))$. This enables us to approximate the canonical basis of simple modules by their Demazure submodules.

Similarly, let $\xi_{-w\lambda}={^{\omega}\eta_{w\lambda}}\in \mathbf{B}({^{\omega}\Lambda_{\lambda}})$ be the unique canonical basis element of ${^{\omega}\Lambda_{\lambda}}$ of weight $-w\lambda$. The {\it Demazure module} ${^{\omega}V_{w}(\lambda)}$ is the $\mathbf{U}(\mathfrak{b}^-)$-submodule of ${^{\omega}\Lambda_{\lambda}}$ generated by $\xi_{-w\lambda}$. It has the {\it canonical basis} $\mathbf{B}({^{\omega}V_{w}(\lambda)}):=\mathbf{B}({}^\o \L_\l)\cap {}^\o V_w(\l)$. We have $\varinjlim_{w\in W}{^{\omega}V_{w}(\lambda)}={^{\omega}\Lambda_{\lambda}}$ and $\mathbf{B}({^{\omega}\Lambda_{\lambda}})=\bigcup_{w\in W}\mathbf{B}({^{\omega}V_{w}(\lambda)})$.

Recall that $M_{-w\lambda}$ is the Verma module with the highest weight $-w\lambda$. There is a surjective $\mathbf{U}(\mathfrak{b}^-)$-module homomorphism $a: M_{-w \l} \to {^{\omega}V_{w}(\lambda)},x^-v_{-w\lambda}\mapsto x^-\xi_{-w\lambda}$. We call it the {\it projection map}.

The underlying vector space of $M_{-w\lambda}$ is $\mathbf{f}$, thus the canonical basis $\mathbf{B}$ of $\mathbf{f}$ can be carried over to $M_{-w\lambda}$. We denote it by $\mathbf{B}(M_{-w\lambda}):=\{b^-v_{-w\lambda_1};b\in \mathbf{B}\}$. The following proposition follows from \cite[Lemma 8.2.1]{Kashiwara-1994}. 

\begin{proposition}\label{action map is a based map}
Let $\l \in X^+,w \in W$. Then $a(\mathbf{B}(M_{-w\lambda}))\subset \mathbf{B}({^{\omega}V_{w}(\lambda)})\sqcup \{0\}$, and the set $\mathbf{B}(M_{-w\lambda})\cap \mathrm{ker}\,a$ is a basis of $\mathrm{ker}\,a$.
\end{proposition}

\begin{proposition}\label{canonical basis of demazure otimes highest weight}
Let $\lambda_1,\lambda_2\in X^+,w\in W$. Then the subset $\mathbf{B}({^{\omega}V_{w}(\lambda_1)}\otimes \Lambda_{\lambda_2}):=\mathbf{B}(^\o \L_{\l_1}) \diamondsuit \mathbf{B}(\Lambda_{\lambda_2})$ of $\mathbf{B}({^{\omega}\Lambda_{\lambda_1}}\otimes \Lambda_{\lambda_2})$ is a basis of ${^{\omega}V_{w}(\lambda_1)}\otimes \Lambda_{\lambda_2}$. 
\end{proposition}
\begin{proof}
Note that the $\Psi$-involution on the tensor product ${^{\omega}\Lambda_{\lambda_1}}\otimes \Lambda_{\lambda_2}$ preserves the subspace ${^{\omega}V_{w}(\lambda_1)}\otimes \Lambda_{\lambda_2}$. Indeed, for any $b_1\in \mathbf{B}({^{\omega}V_{w}(\lambda_1)})$ and $b_2\in \mathbf{B}(\Lambda_{\lambda_2})$, by \eqref{Phi-involution}, we have 
\begin{align*}
\Psi(b_1\otimes b_2)=\sum_{\nu\in \mathbb{N}[I]}(-v)^{\mathrm{tr}\,\nu}\sum_{b\in \mathbf{B}_{\nu}}b^-b_1\otimes {b^*}^+ b_2\in {^{\omega}V_{w}(\lambda_1)}\otimes \Lambda_{\lambda_2}.
\end{align*}
By the proof of Theorem \ref{24.3.3}, $b_1\diamondsuit b_2\in {^{\omega}V_{w}(\lambda_1)}\otimes \Lambda_{\lambda_2}$, and the transition matrix from the basis $\mathbf{B}({^{\omega}V_{w}(\lambda_1)})\otimes \mathbf{B}(\Lambda_{\lambda_2})$ to the set $\mathbf{B}({^{\omega}V_{w}(\lambda_1)}\otimes \Lambda_{\lambda_2})$ is unitriangular. So $\mathbf{B}({^{\omega}V_{w}(\lambda_1)}\otimes \Lambda_{\lambda_2})$ is also a basis of ${}^\o V_w(\l_1))\otimes \L_{\l_2}$.
\end{proof}

We call $\mathbf{B}({^{\omega}V_{w}(\lambda_1)}\otimes \Lambda_{\lambda_2})$ the {\it canonical bases} of the tensor product ${^{\omega}V_{w}(\lambda_1)}\otimes \Lambda_{\lambda_2}$. By definition, we have $\mathbf{B}({^{\omega}V_{w}(\lambda_1)}\otimes \Lambda_{\lambda_2})=\mathbf{B}({^{\omega}\Lambda_{\lambda_1}}\otimes \Lambda_{\lambda_2})\cap ({^{\omega}V_{w}(\lambda_1)}\otimes \Lambda_{\lambda_2})$.

\section{Thickening realization of tensor product}\label{sec:thick-module}

\subsection{Thickening product}\label{The thickening product}

For the symmetric Cartan datum $(I,\cdot)$, we define its {\it thickening Cartan datum} to be the symmetric Cartan datum $(\tilde{I},\cdot)$, where $\tilde{I}=I\sqcup I'$ is obtained from $I$ by adding a copy $I'=\{i';i\in I\}$, and the symmetric bilinear form $\mathbb{Z}[\tilde{I}]\times \mathbb{Z}[\tilde{I}]\rightarrow \mathbb{Z}$ is extended from that in $(I,\cdot)$ such that 
$$i\cdot j'=-\delta_{i,j},\ i'\cdot j'=2\delta_{i,j}\ \textrm{for any}\ i,j\in I.$$

For the $Y$-regular root datum $(Y,X,\langle\,,\,\rangle,\ldots)$ of type $(I,\cdot)$, we define its {\it thickening root datum} to be the $\tilde{Y}$-regular root datum $(\tilde{Y},\tilde{X},\langle\,,\,\rangle,\ldots)$ of type $(\tilde{I},\cdot)$, where $\tilde{Y}=Y\oplus \mathbb{Z}[I'],\tilde{X}=X\oplus \mathrm{Hom}_{\mathbb{Z}}(\mathbb{Z}[I'],\mathbb{Z})$, the bilinear paring $\tilde{Y}\times \tilde{X}\rightarrow \mathbb{Z}$ and the embeddings $\tilde{I}\hookrightarrow \tilde{Y},\tilde{I}\hookrightarrow \tilde{X}$ are extended from those in $(Y,X,\langle\,,\,\rangle,\ldots)$ respectively (see \cite[\S 3.1]{Fang-Lan-2025}).

Let $\tilde{\mathbf{f}}$ and $\tilde{\mathbf{U}}$ be the algebra and the quantum group defined in \S \ref{The algebra f} and \S \ref{Quantized enveloping algebra} associated with the thickening data $(\tilde{I},\cdot)$ and $(\tilde{Y},\tilde{X},\langle\,,\,\rangle,\ldots)$ respectively. Then the natural embedding $I\hookrightarrow \tilde{I}$ induces subalgebra embeddings $\mathbf{f}\hookrightarrow\tilde{\mathbf{f}}$ and $\mathbf{U}\hookrightarrow\tilde{\mathbf{U}}$.

Let $\lambda\in X^+$. Since $i'\cdot j'=0,\theta_{i'}\theta_{j'}=\theta_{j'}\theta_{i'}$ for any $i\not=j$ in $I$, the element 
$$\theta_{\lambda}=\prod_{i\in I}\theta_{i'}^{(\langle i,\lambda\rangle)}\in \tilde{\mathbf{f}}$$
is independent of the order in the product. 

For any $\zeta\in X$ and $\lambda\in X^+$, we regard them as elements of $\tilde{X}$ via the natural embedding $X\hookrightarrow \tilde{X}$, and define $\zeta\odot\lambda\in \tilde{X}$ such that
\begin{equation}\label{definition of odot}
\begin{aligned}
&\langle \mu,\zeta\odot\lambda\rangle=\langle\mu,\zeta+\lambda+|\theta_{\lambda}|\rangle\ \textrm{for any}\ \mu\in Y;\\
&\langle i',\zeta\odot\lambda\rangle=\langle i,\lambda\rangle\ \textrm{for any}\ i\in I.
\end{aligned}
\end{equation}
We call $\zeta\odot\lambda$ the {\it thickening product} of $\zeta$ and $\lambda$. This product ``thickens'' the weight $\zeta$ by incorporating the data of $\lambda$ into the new $I'$-components. By definition, $\langle i,\lambda+|\theta_{\lambda}|\rangle=0,\langle i,\zeta\odot\lambda\rangle=\langle i,\zeta\rangle$ for any $i\in I$. So $\zeta\odot\lambda\in \tilde{X}^+$ when $\zeta,\lambda\in X^+$.

\subsection{Realization of tensor product}\label{ftheta_lambdaf}

In this subsection, we establish the thickening realization for the tensor product of a Verma module with a simple highest weight module. In particular, we have the following $\mathbf{U}$-module isomorphisms 
$$\xymatrix@C=2cm{(\mathbf{f}\theta_{\lambda_2}\mathbf{f})_{-w\lambda_1\odot\lambda_2} \ar@<.5ex>[r]^{\varphi_{-w\lambda_1,\lambda_2}} &M_{-w\lambda_1}\otimes \Lambda_{\lambda_2} \ar@<.5ex>[l]^{\psi_{-w\lambda_1,\lambda_2}}}$$
in the diagram \eqref{*}.

Let $\lambda\in X^+$. We define $\mathbf{f}\theta_{\lambda}\mathbf{f}$ to be the subspace of $\tilde{\mathbf{f}}$ spanned by $x\theta_{\lambda}y$ for any $x,y\in \mathbf{f}$. Let $\tilde{\zeta}\in \tilde{X}$, and $M_{\tilde{\zeta}}$ be the Verma module of $\tilde{\mathbf{U}}$ with the highest weight $\tilde{\zeta}$. Then $M_{\tilde{\zeta}}$ is $\mathbf{U}$-module via the embedding $\mathbf{U}\hookrightarrow \tilde{\mathbf{U}}$. We define $$(\mathbf{f}\theta_{\lambda}\mathbf{f})_{\tilde{\zeta}}=\{x^-v_{\tilde{\zeta}};x\in \mathbf{f}\theta_{\lambda}\mathbf{f}\}.$$ 
Similar to \cite[Lemma 3.1]{Fang-Lan-2025}, $(\mathbf{f}\theta_{\lambda}\mathbf{f})_{\tilde{\zeta}}$ is a $\mathbf{U}$-submodule of $M_{\tilde{\zeta}}$. In particular, for $\zeta\in X$ and $\zeta\odot\lambda\in \tilde{X}$, we have the $\mathbf{U}$-module $(\mathbf{f}\theta_{\lambda}\mathbf{f})_{\zeta\odot\lambda}$ of $M_{\zeta\odot\lambda}$.

Analogously to \cite[\S  3.2, (18)]{Li-2014}, we construct the $\mathbf{U}$-module homomorphism $\psi_{\zeta,\lambda}:M_{\zeta}\otimes \Lambda_{\lambda}\rightarrow (\mathbf{f}\theta_{\lambda}\mathbf{f})_{\zeta\odot\lambda}$. Let $\mathbb{Q}(v)_{\lambda}$ be the one-dimensional $\mathbf{U}(\mathfrak{b^+})$-module with the basis $\{\rho_{\lambda}\}$ and the module structure defined by $E_i\rho_{\lambda}=0,K_{\mu}\rho_{\lambda}=v^{\langle \mu,\lambda\rangle}\rho_{\lambda}$ for any $i\in I$ and $\mu\in Y$. Then $M_{\lambda}=\mathbf{U}\otimes_{\mathbf{U}(\mathfrak{b^+})}\mathbb{Q}(v)_{\lambda}$. We can check that the map $M_{\zeta}\otimes \mathbb{Q}(v)_{\lambda}\rightarrow (\mathbf{f}\theta_{\lambda}\mathbf{f})_{\zeta\odot\lambda},x^-v_{\zeta}\otimes \rho_{\lambda}\mapsto \rho_{\lambda}\theta_{\lambda}^-x^-v_{\zeta\odot\lambda}$ for any $x\in \mathbf{f}$, is a $\mathbf{U}(\mathfrak{b^+})$-module homomorphism. By the tensor identity $M_{\zeta}\otimes M_{\lambda}\cong \mathbf{U}\otimes_{\mathbf{U}(\mathfrak{b}^+)}(M_{\zeta}\otimes \mathbb{Q}(v)_{\lambda})$ and the Frobenius reciprocity 
$$\mathrm{Hom}_{\mathbf{U}}(M_{\zeta}\otimes M_{\lambda},(\mathbf{f}\theta_{\lambda}\mathbf{f})_{\zeta\odot\lambda})\cong \mathrm{Hom}_{\mathbf{U}(\mathfrak{b^+})}(M_{\zeta}\otimes \mathbb{Q}(v)_{\lambda},(\mathbf{f}\theta_{\lambda}\mathbf{f})_{\zeta\odot\lambda}),$$
we obtain a $\mathbf{U}$-module homomorphism $\psi_0:M_{\zeta}\otimes M_{\lambda}\rightarrow (\mathbf{f}\theta_{\lambda}\mathbf{f})_{\zeta\odot\lambda}$ such that 
$$\psi_0(x^-v_{\zeta}\otimes v_{\lambda})=\theta_{\lambda}^-x^-v_{\zeta\odot\lambda}\ \textrm{for any}\ x\in \mathbf{f}.$$
For any $i\in I$, we can prove by induction on $n\in \mathbb{N}$ that $\psi_0(x^-v_{\zeta}\otimes F_i^{(n)}v_{\lambda})=\sum_{k=0}^{n} (-1)^k v^{-k(\langle i,\lambda\rangle+1-n)}F_i^{(n-k)}\theta_{\lambda}^-F_i^{(k)}x^-v_{\zeta\odot\lambda}$ for any $x\in \mathbf{f}$. Then we have 
$$\psi_0(x^-v_{\zeta}\otimes F_i^{(\langle i,\lambda\rangle+1)}v_{\lambda})=\!\!\!\sum_{k=0}^{\langle i,\lambda\rangle+1}\!\! (-1)^k F_i^{(\langle i,\lambda\rangle+1-k)}F_{i'}^{(\langle i,\lambda\rangle)}F_i^{(k)}\!\prod_{j\in I;j\not=i}F_{j'}^{(\langle j
,\lambda\rangle)}x^-v_{\zeta\odot\lambda}=0$$
by the higher order quantum Serre relations in \cite[Proposition 7.1.5]{Lusztig-1993}. Moreover, we can to prove by induction on $\mathrm{tr}\,|y|\in \mathbb{N}$ that $\psi_0(x^-v_{\zeta}\otimes y^-F_i^{(\langle i,\lambda\rangle+1)}v_{\lambda})=0$ for any homogeneous $x,y\in \mathbf{f}$. As a result, $\psi_0(M_{\zeta}\otimes \sum_{i\in I}\mathbf{U}^-F_i^{\langle i,\lambda\rangle+1}v_{\lambda})=0$, and so $\psi_0$ induces a $\mathbf{U}$-module homomorphism $\psi_{\zeta,\lambda}:M_{\zeta}\otimes \Lambda_{\lambda}\rightarrow (\mathbf{f}\theta_{\lambda}\mathbf{f})_{\zeta\odot\lambda}$ such that 
\begin{equation}\label{im psi}
\psi_{\zeta,\lambda}(x^-v_{\zeta}\otimes \eta_{\lambda})=\theta_{\lambda}^-x^-v_{\zeta\odot\lambda}\ \textrm{for any}\ x\in \mathbf{f} 
\end{equation}

Next we introduce the map $\varphi_{\zeta,\lambda}:(\mathbf{f}\theta_{\lambda}\mathbf{f})_{\zeta\odot\lambda}\rightarrow M_{\zeta}\otimes \Lambda_{\lambda}$.

\begin{lemma}\label{linear map varphi}
There exists a linear map $\varphi_{\zeta,\lambda}:(\mathbf{f}\theta_{\lambda}\mathbf{f})_{\zeta\odot\lambda}\rightarrow M_{\zeta}\otimes \Lambda_{\lambda}$ such that for any $x,y\in \mathbf{f}$, write $r(x)=\sum x_1\otimes x_2$ with homogeneous $x_1,x_2\in \mathbf{f}$, then 
\begin{equation}\label{im varphi}
\varphi_{\zeta,\lambda}(x^-\theta_{\lambda}^-y^-v_{\zeta\odot\lambda})=\sum v^{|x_2|\cdot |\theta_{\lambda}|}x_2^-y^-v_{\zeta}\otimes x_1^-\eta_{\lambda}.
\end{equation}
\end{lemma}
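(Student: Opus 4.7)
The only delicate point in this lemma is well-definedness of the formula for $\varphi_{\zeta,\lambda}$: the same element of $(\mathbf{f}\theta_\lambda\mathbf{f})_{\zeta\odot\lambda}\subset\tilde{\mathbf{f}}$ admits many presentations $\sum c_i x_i\theta_\lambda y_i$, and we must check that the right-hand side of \eqref{im varphi} depends only on the product, not on the chosen decomposition.

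My plan is to introduce the auxiliary bilinear map
$$\tilde\varphi:\mathbf{f}\otimes\mathbf{f}\to M_\zeta\otimes\Lambda_\lambda,\quad \tilde\varphi(x\otimes y)=\sum v^{|x_2|\cdot|\theta_\lambda|}x_2y\otimes x_1^-\eta_\lambda,$$
and the surjective multiplication map
$$\mu:\mathbf{f}\otimes\mathbf{f}\to(\mathbf{f}\theta_\lambda\mathbf{f})_{\zeta\odot\lambda},\quad \mu(x\otimes y)=x\theta_\lambda y.$$
The lemma is equivalent to the inclusion $\ker\mu\subseteq\ker\tilde\varphi$, in which case $\varphi_{\zeta,\lambda}$ is the unique linear map satisfying $\tilde\varphi=\varphi_{\zeta,\lambda}\circ\mu$. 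The principal intermediate identity is $\psi_{\zeta,\lambda}\circ\tilde\varphi=\mu$, which I would establish by induction on $\tr\,|x|$. The base case $x=1$ is immediate from \eqref{im psi}: $\tilde\varphi(1\otimes y)=y\otimes\eta_\lambda$ and $\psi_{\zeta,\lambda}(y\otimes\eta_\lambda)=\theta_\lambda y=\mu(1\otimes y)$. For the step $x=\theta_ix'$, combine the coproduct $r(\theta_ix')=(\theta_i\otimes 1+1\otimes\theta_i)r(x')$ in the twisted tensor algebra, expand $\tilde\varphi(\theta_ix'\otimes y)$, and match the factor $v^{|x_2|\cdot|\theta_\lambda|}$ (with the crucial observation $i\cdot|\theta_\lambda|=-\langle i,\lambda\rangle$) against the $K_{-i}$-eigenvalue on $x'_1{}^-\eta_\lambda$ appearing in $\Delta(F_i)=1\otimes F_i+F_i\otimes K_{-i}$. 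A direct calculation then yields $\tilde\varphi(\theta_ix'\otimes y)=F_i\,\tilde\varphi(x'\otimes y)$; the $\mathbf{U}$-equivariance of $\psi_{\zeta,\lambda}$ together with the induction hypothesis give $\psi_{\zeta,\lambda}(\tilde\varphi(\theta_ix'\otimes y))=F_i(x'\theta_\lambda y)=\mu(\theta_ix'\otimes y)$.

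With $\psi_{\zeta,\lambda}\circ\tilde\varphi=\mu$ in hand, the desired containment $\ker\mu\subseteq\ker\tilde\varphi$ reduces to the injectivity of $\psi_{\zeta,\lambda}$, which I expect to be the main obstacle. My approach is a weight-space dimension comparison: $\psi_{\zeta,\lambda}$ is surjective by \S\ref{ftheta_lambdaf}, so it suffices to check $\dim (M_\zeta\otimes\Lambda_\lambda)^{\zeta'}=\dim (\mathbf{f}\theta_\lambda\mathbf{f})_{\zeta\odot\lambda}^{\zeta'}$ for every weight $\zeta'$. The left side is the classical convolution $\sum_{\zeta_1+\zeta_2=\zeta'}\dim\mathbf{f}_{\zeta-\zeta_1}\cdot\dim\Lambda_\lambda^{\zeta_2}$. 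For the right side, iterating the commutation $\theta_i\theta_\lambda=v^{-\langle i,\lambda\rangle}\theta_\lambda\theta_i+\psi_0(1\otimes\theta_i)$ derived in \S\ref{ftheta_lambdaf} together with the relations $\psi_0(M_\zeta\otimes\sum_i\mathbf{f}\theta_i^{\langle i,\lambda\rangle+1})=0$ produces a spanning set of $(\mathbf{f}\theta_\lambda\mathbf{f})_{\zeta\odot\lambda}$ parametrized by $\mathbf{f}\otimes\Lambda_\lambda$; the required linear independence of this spanning set follows from the fact that $\tilde{\mathbf{f}}=\tilde{\mathbf{U}}^-$ is an integral domain, so $\theta_\lambda$ is a non-zero-divisor and $\mathbf{f}\to\theta_\lambda\mathbf{f}$ is injective on each graded piece. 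The resulting bijectivity of $\psi_{\zeta,\lambda}$ then completes the proof.
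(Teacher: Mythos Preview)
Your identity $\psi_{\zeta,\lambda}\circ\tilde\varphi=\mu$ is correct, and the inductive computation you outline is exactly the one the paper uses later (in the proof that $\psi\varphi=\mathrm{id}$). So the first half of your strategy is sound.

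The gap is in the last step: your justification of the injectivity of $\psi_{\zeta,\lambda}$ does not hold up. The integral-domain property of $\tilde{\mathbf{f}}$ tells you only that $y\mapsto\theta_\lambda y$ is injective, i.e.\ that the family $\{\theta_\lambda y\}=\psi(\mathbf{f}\otimes\eta_\lambda)$ is linearly independent. But your ``spanning set parametrized by $\mathbf{f}\otimes\Lambda_\lambda$'' is nothing other than the image under $\psi_{\zeta,\lambda}$ of a basis of $M_\zeta\otimes\Lambda_\lambda$; e.g.\ $\psi(x\otimes\theta_i^-\eta_\lambda)=\theta_i\theta_\lambda x-v^{-\langle i,\lambda\rangle}\theta_\lambda\theta_i x$ is not of the form $\theta_\lambda\cdot(\text{something})$, and the non-zero-divisor argument says nothing about linear independence of such elements. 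Asserting that this image family is linearly independent \emph{is} the injectivity of $\psi_{\zeta,\lambda}$, so the reasoning is circular. Nor is there an obvious independent dimension count for $(\mathbf{f}\theta_\lambda\mathbf{f})_{\zeta\odot\lambda}$; it requires knowing something about the structure of $\tilde{\mathbf{f}}$ as an $\mathbf{f}$-bimodule.

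The paper sidesteps this entirely by building $\varphi_{\zeta,\lambda}$ as a composition of manifestly well-defined linear maps on $\tilde{\mathbf{f}}$: the comultiplication $r:\tilde{\mathbf{f}}\to\tilde{\mathbf{f}}\otimes\tilde{\mathbf{f}}$, a projection $p\otimes\mathrm{Id}$ onto tensors whose first factor has $I'$-degree equal to $|\theta_\lambda|$, the swap $\tau$, the quotient $\mathrm{Id}\otimes\pi_{0\odot\lambda}$ to $\tilde{\mathbf{f}}\otimes\Lambda_{0\odot\lambda}$, and finally the known isomorphism $\phi_\lambda^{-1}:\Lambda_{0,\lambda}\to\Lambda_\lambda$ (from \cite[Proposition~3.4]{Fang-Lan-2025}). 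Well-definedness is automatic, and a short computation recovers formula~\eqref{im varphi}. Injectivity of $\psi_{\zeta,\lambda}$ is then a \emph{consequence} (Lemma~\ref{isomorphism varphi}), not an input. If you want to rescue your route, you would need an independent proof that $\psi_{\zeta,\lambda}$ is injective---for instance via a PBW-type basis of $\tilde{\mathbf{f}}$ adapted to the inclusion $\mathbf{f}\hookrightarrow\tilde{\mathbf{f}}$---but that is substantially more work than the paper's direct construction.
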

\begin{proof}
Consider the composition of linear maps
\begin{align*}
\varphi_0:\mathbf{f}\theta_{\lambda}\mathbf{f}\hookrightarrow \tilde{\mathbf{f}}\xrightarrow{r}\tilde{\mathbf{f}}\otimes \tilde{\mathbf{f}}\xrightarrow{p\otimes \mathrm{Id}}\bigoplus_{\nu\in \mathbb{N}[I]}\tilde{\mathbf{f}}_{\nu+|\theta_{\lambda}|}\otimes \tilde{\mathbf{f}}\hookrightarrow \tilde{\mathbf{f}}\otimes \tilde{\mathbf{f}}\xrightarrow{\tau}\tilde{\mathbf{f}}\otimes \tilde{\mathbf{f}},
\end{align*}
where $r$ is the comultiplication of $\tilde{\mathbf{f}}$, $p$ is the natural projection from $\tilde{\mathbf{f}}$ to the direct sum of its certain homogeneous components, $\tau$ is the natural isomorphism defined by $\tau(x\otimes y)=y\otimes x$. For any $x,y\in \mathbf{f}$, write $r(x)=\sum x_1\otimes x_2,r(y)=\sum y_1\otimes y_2$ with homogeneous $x_1,x_2,y_1,y_2\in \mathbf{f}$, by definitions, we have
$$\varphi_0(x\theta_{\lambda}y)=\sum v^{|x_2|\cdot|\theta_{\lambda}|+|x_2|\cdot|y_1|}x_2y_2\otimes x_1\theta_{\lambda}y_1\in \mathbf{f}\otimes \mathbf{f}\theta_{\lambda}\mathbf{f},$$
and so $\mathrm{im}\,\varphi_0\subset \mathbf{f}\otimes\mathbf{f}\theta_{\lambda}\mathbf{f}$.

Recall that $0\odot\lambda\in \tilde{X}^+$ is dominant, and we have the Verma module $M_{0\odot\lambda}$, the simple highest weight module $\Lambda_{0\odot\lambda}$ of $\tilde{\mathbf{U}}$ and the natural projection $\pi_{0\odot\lambda}:M_{0\odot\lambda}\rightarrow \Lambda_{0\odot\lambda}$. Identifying the vector spaces $\mathbf{f}\theta_{\lambda}\mathbf{f}\cong (\mathbf{f}\theta_{\lambda}\mathbf{f})_{\zeta\odot\lambda}$ and $\mathbf{f}\otimes\mathbf{f}\theta_{\lambda}\mathbf{f}\cong M_{\zeta}\otimes (\mathbf{f}\theta_{\lambda}\mathbf{f})_{0\odot\lambda}$, we obtain a linear map 
$$(\mathbf{f}\theta_{\lambda}\mathbf{f})_{\zeta\odot\lambda}\xrightarrow{\varphi_0}M_{\zeta}\otimes (\mathbf{f}\theta_{\lambda}\mathbf{f})_{0\odot\lambda}\xrightarrow{\mathrm{Id}\otimes \pi_{0\odot\lambda}} M_{\zeta}\otimes \pi_{0\odot\lambda}((\mathbf{f}\theta_{\lambda}\mathbf{f})_{0\odot\lambda})\subset M_{\zeta}\otimes \Lambda_{0\odot\lambda}$$
such that $(\mathrm{Id}\otimes \pi_{0\odot\lambda})\varphi_0(x^-\theta_{\lambda}^-y^-v_{\zeta\odot\lambda})=\sum v^{|x_2|\cdot|\theta_{\lambda}|+|x_2|\cdot|y_1|}x_2^-y_2^-v_{\zeta}\otimes x_1^-\theta_{\lambda}^-y_1^-\eta_{0\odot\lambda}$.
Since $\langle i,0\odot\lambda\rangle=0$ for any $i\in I$ and $y_1^-\eta_{0\odot\lambda}=0$ whenever $|y_1|\not=0$, we have
$$(\mathrm{Id}\otimes \pi_{0\odot\lambda})\varphi_0(x\theta_{\lambda}y)=\sum v^{|x_2|\cdot |\theta_{\lambda}|}x_2^-y^-v_{\zeta}\otimes x_1^-\theta_{\lambda}^-\eta_{0\odot\lambda}.$$
By \cite[Proposition 3.4]{Fang-Lan-2025}, the right multiplication by $\theta_{\lambda}$ induces a $\mathbf{U}$-module isomorphism $\phi_{\lambda}:\Lambda_{\lambda}\rightarrow \pi_{0\odot\lambda}((\mathbf{f}\theta_{\lambda}\mathbf{f})_{0\odot\lambda})$ such that $x^-\eta_{\lambda}\mapsto x^-\theta_{\lambda}^-\eta_{0\odot\lambda}$ for any $x\in \mathbf{f}$. Then the composition of linear maps 
$$\varphi_{\zeta,\lambda}:(\mathbf{f}\theta_{\lambda}\mathbf{f})_{\zeta\odot\lambda}\xrightarrow{(\mathrm{Id}\otimes \pi_{0\odot\lambda})\varphi_0}M_{\zeta}\otimes\pi_{0\odot\lambda}((\mathbf{f}\theta_{\lambda}\mathbf{f})_{0\odot\lambda})\xrightarrow{\mathrm{Id}\otimes \phi_{\lambda}^{-1}}M_{\zeta}\otimes \Lambda_{\lambda}$$ 
satisfies the condition.
\end{proof}


\begin{lemma}\label{isomorphism varphi}
Let $\zeta\in X$ and $\lambda\in X^+$. Then both $\varphi_{\zeta, \l}$ and $\psi_{\zeta,\l}$ are $\mathbf{U}$-module isomorphisms, and they inverse to each other.
\end{lemma}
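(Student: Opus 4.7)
The plan is to prove $\varphi_{\zeta,\lambda} \circ \psi_{\zeta,\lambda} = \mathrm{Id}_{M_\zeta \otimes \Lambda_\lambda}$ via a spanning-set argument based on $\mathbf{U}^-$-equivariance; combined with the (easy) surjectivity of $\psi_{\zeta,\lambda}$, this forces $\psi_{\zeta,\lambda}$ and $\varphi_{\zeta,\lambda}$ to be mutually inverse.

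First I verify the identity on the slice $\mathbf{f} \otimes \eta_\lambda$. From \eqref{im psi}, $\psi_{\zeta,\lambda}(x \otimes \eta_\lambda) = \theta_\lambda x$, and applying \eqref{im varphi} to $\theta_\lambda x = 1\cdot\theta_\lambda\cdot x$ the sum collapses because $r(1) = 1 \otimes 1$ leaves only $x_1 = x_2 = 1$, yielding $\varphi_{\zeta,\lambda}(\theta_\lambda x) = x \otimes \eta_\lambda$.

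Next I establish three parallel facts. (i) The subspace $(\mathbf{f}\theta_\lambda\mathbf{f})_{\zeta\odot\lambda} \subset M_{\zeta\odot\lambda} = \tilde{\mathbf{f}}$ carries the $\mathbf{U}^-$-action via $\mathbf{U}^- \hookrightarrow \tilde{\mathbf{U}}^-$, i.e.\ by left multiplication by $\mathbf{f}$; thus $\psi_{\zeta,\lambda}$ is automatically $\mathbf{U}^-$-equivariant. (ii) The linear map $\varphi_{\zeta,\lambda}$ is also $\mathbf{U}^-$-equivariant: for $y_0 \in \mathbf{f}$ with $r(y_0) = \sum y_{0,1} \otimes y_{0,2}$, I compare $\varphi_{\zeta,\lambda}(y_0 x \theta_\lambda y)$ (expanded via multiplicativity of $r$ in the twisted tensor algebra) with $y_0^-\cdot \varphi_{\zeta,\lambda}(x\theta_\lambda y)$ (expanded via the coproduct $\Delta(y_0^-) = \sum y_{0,2}^- \otimes y_{0,1}^- K_{-|y_{0,2}|}$, which follows inductively from $\Delta(F_i) = 1\otimes F_i + F_i\otimes K_{-i}$). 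The tensor entries match term by term, and the $v$-coefficients coincide thanks to the arithmetic identity $|y_{0,2}|\cdot |\theta_\lambda| = -\langle |y_{0,2}|, \lambda\rangle$, which is immediate from the thickening pairing rule $j \cdot i' = -\delta_{i,j}$. (iii) The subset $\mathbf{f} \otimes \eta_\lambda$ generates $M_\zeta \otimes \Lambda_\lambda$ as a $\mathbf{U}^-$-module: the identity $y^-(x \otimes \eta_\lambda) = \sum v^{-\langle |y_2|, \lambda\rangle} y_2 x \otimes y_1^- \eta_\lambda$ exhibits $x \otimes y^- \eta_\lambda$ (the $y_1 = y$, $y_2 = 1$ term) modulo elements with strictly smaller $|y_1|$, and induction on $|y|$ concludes. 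Putting the pieces together, $\varphi_{\zeta,\lambda} \circ \psi_{\zeta,\lambda}$ is a $\mathbf{U}^-$-equivariant endomorphism agreeing with the identity on a $\mathbf{U}^-$-generating subset, hence it is the identity throughout; the identity $\psi_{\zeta,\lambda}(y^-(x \otimes \eta_\lambda)) = y\theta_\lambda x$ simultaneously produces every spanning element $y\theta_\lambda x$ of the codomain in the image, so $\psi_{\zeta,\lambda}$ is surjective, and with $\varphi_{\zeta,\lambda} \circ \psi_{\zeta,\lambda} = \mathrm{Id}$ we conclude that $\psi_{\zeta,\lambda}$ is bijective with inverse $\varphi_{\zeta,\lambda}$.

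The main obstacle I expect is the $\mathbf{U}^-$-equivariance of $\varphi_{\zeta,\lambda}$: this map is assembled from the coproduct of $\tilde{\mathbf{f}}$, a projection onto certain homogeneous components, a flip, and a quotient, none of which are $\mathbf{U}$-equivariant individually, so equivariance is not formal and must be verified by a direct coefficient-by-coefficient check. It is precisely the arithmetic of the thickening pairing $j \cdot i' = -\delta_{i,j}$ that makes the two exponents collapse to the same expression, which is the whole point of introducing the thickening construction here.
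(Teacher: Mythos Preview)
Your proof is correct and takes essentially the same approach as the paper: both hinge on verifying the $\mathbf{U}^-$-equivariance of $\varphi_{\zeta,\lambda}$ (the paper checks only the generators $F_i$, which suffices) and the identity $\varphi_{\zeta,\lambda}\psi_{\zeta,\lambda}=\mathrm{Id}$ on the $\mathbf{U}^-$-generating slice $\mathbf{f}\otimes\eta_\lambda$. The only organizational difference is that the paper also proves $\psi_{\zeta,\lambda}\varphi_{\zeta,\lambda}=\mathrm{Id}$ by a parallel induction on $\mathrm{tr}\,|x|$, whereas you short-circuit this via the evident surjectivity of $\psi_{\zeta,\lambda}$---a minor economy.
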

\begin{proof}
We simply denote $\varphi=\varphi_{\zeta,\lambda}$ and $\psi=\psi_{\zeta,\lambda}$. We prove that $\varphi$ commutes with the actions of $K_{\mu}$ and $F_i$ for $\mu\in Y$ and $i\in I$. For any homogeneous $x, y\in \mathbf{f}$, write $r(x)=\sum x_1\otimes x_2$ with homogeneous $x_1,x_2\in \mathbf{f}$. Then $|x|=|x_1|+|x_2|$. By \eqref{definition of odot} and \eqref{im varphi}, we have
\begin{align*}
\varphi(K_{\mu}x^-\theta_{\lambda}^-y^-v_{\zeta\odot\lambda})=&\varphi(v^{\langle\mu,\zeta\odot\lambda-|x\theta_{\lambda}y|\rangle}x^-\theta_{\lambda}^-y^-v_{\zeta\odot\lambda})\\
=&v^{\langle\mu,\zeta+\lambda+|\theta_{\lambda}|-|x\theta_{\lambda}y|\rangle}\sum v^{|x_2|\cdot |\theta_{\lambda}|}x_2^-y^-v_{\zeta}\otimes x_1^-\eta_{\lambda}\\
=&\sum v^{|x_2|\cdot |\theta_{\lambda}|} v^{\langle\mu,\zeta-|x_2y|\rangle}x_2^-y^-v_{\zeta}\otimes v^{\langle\mu,\lambda-|x_1|\rangle}x_1^-\eta_{\lambda}\\
=&\sum v^{|x_2|\cdot |\theta_{\lambda}|} K_{\mu}x_2^-y^-v_{\zeta}\otimes K_{\mu}x_1^-\eta_{\lambda}=K_{\mu}\varphi(x^-\theta_{\lambda}^-y^-v_{\zeta\odot\lambda}).
\end{align*}
By $r(\theta_ix)=\sum \theta_i x_1\otimes x_2+\sum v^{i\cdot |x_1|}x_1\otimes \theta_ix_2$ and \eqref{im varphi}, we have
\begin{align*}
&\varphi(F_i x^-\theta_{\lambda}^-y^-v_{\zeta\odot\lambda})=\varphi((\theta_ix)^-\theta_{\lambda}^-y^-v_{\zeta\odot\lambda})\\
=&\sum v^{|x_2|\cdot |\theta_{\lambda}|}x_2^-y^-v_{\zeta}\otimes (\theta_ix_1)^-\eta_{\lambda}+\sum v^{i\cdot |x_1|} v^{|\theta_i x_2|\cdot |\theta_{\lambda}|}(\theta_ix_2)^-y^-v_{\zeta}\otimes x_1^-\eta_{\lambda}\\
=&\sum v^{|x_2|\cdot |\theta_{\lambda}|}x_2^-y^-v_{\zeta}\otimes F_ix_1^-\eta_{\lambda}+\sum v^{|x_2|\cdot |\theta_{\lambda}|}F_ix_2^-y^-v_{\zeta}\otimes v^{\langle-i,\lambda-|x_1|\rangle}x_1^-\eta_{\lambda}\\
=&\sum v^{|x_2|\cdot |\theta_{\lambda}|}x_2^-y^-v_{\zeta}\otimes F_ix_1^-\eta_{\lambda}+\sum v^{|x_2|\cdot |\theta_{\lambda}|}F_ix_2^-y^-v_{\zeta}\otimes K_{-i}x_1^-\eta_{\lambda}\\
=&F_i\varphi(x^-\theta_{\lambda}^-y^-v_{\zeta\odot\lambda}).
\end{align*}

We prove that $\psi\varphi=\mathrm{Id}:(\mathbf{f}\theta_{\lambda}\mathbf{f})_{\zeta\odot\lambda}\rightarrow (\mathbf{f}\theta_{\lambda}\mathbf{f})_{\zeta\odot\lambda}$. Notice that $(\mathbf{f}\theta_{\lambda}\mathbf{f})_{\zeta\odot\lambda}$ is spanned by $x^-\theta_{\lambda}^-y^-v_{\zeta\odot\lambda}$ for any homogeneous $x,y\in \mathbf{f}$. We prove by induction on $\mathrm{tr}\,|x|\in \mathbb{N}$ that $\psi\varphi(x^-\theta_{\lambda}^-y^-v_{\zeta\odot\lambda})=x^-\theta_{\lambda}^-y^-v_{\zeta\odot\lambda}$. If $\mathrm{tr}\,|x|=0$, then $x\in \mathbb{Q}(v)$. By \eqref{im varphi} and \eqref{im psi}, $\psi\varphi(\theta_{\lambda}^-y^-v_{\zeta\odot\lambda})=\psi(y^-v_{\zeta}\otimes \eta_{\lambda})=\theta_{\lambda}^-y^-v_{\zeta\odot\lambda}$. If $\mathrm{tr}\,|x|>0$, then it is enough to prove the statement for any $x=\theta_ix'$ with $i\in I$ and homogeneous $x'\in \mathbf{f}$. Notice that $\mathrm{tr}\,|x'|<\mathrm{tr}\,|x|$. Since both $\varphi$ and $\psi$ commute with the actions of $F_i$, by the inductive hypothesis, we have 
\begin{align*}
\psi\varphi(x^-\theta_{\lambda}^-y^-v_{\zeta\odot\lambda})=&\psi\varphi(F_i{x'}^-\theta_{\lambda}^-y^-v_{\zeta\odot\lambda})=F_i\psi\varphi({x'}^-\theta_{\lambda}^-y^-v_{\zeta\odot\lambda})\\
=&F_i{x'}^-\theta_{\lambda}^-y^-v_{\zeta\odot\lambda}=x^-\theta_{\lambda}^-y^-v_{\zeta\odot\lambda}.
\end{align*}

We prove that $\varphi\psi=\mathrm{Id}:M_{\zeta}\otimes \Lambda_{\lambda}\rightarrow M_{\zeta}\otimes \Lambda_{\lambda}$. Notice that $M_{\zeta}\otimes \Lambda_{\lambda}$ is spanned by $x^-v_{\zeta}\otimes y^-\eta_{\lambda}$ for any homogeneous $x,y\in \mathbf{f}$. We prove by induction on $\mathrm{tr}\,|y|\in \mathbb{N}$ that $\varphi\psi(x^-v_{\zeta}\otimes y^-\eta_{\lambda})=x^-v_{\zeta}\otimes y^-\eta_{\lambda}$. If $\mathrm{tr}\,|y|=0$, then $y\in \mathbb{Q}(v)$. By \eqref{im psi} and \eqref{im varphi}, $\varphi\psi(x^-v_{\zeta}\otimes \eta_{\lambda})=\varphi(\theta_{\lambda}^-x^-v_{\zeta\odot\lambda})=x^-v_{\zeta}\otimes \eta_{\lambda}$. If $\mathrm{tr}\,|y|>0$, then it is enough to prove the statement for any $y=\theta_iy'$ with $i\in I$ and homogeneous $y'\in \mathbf{f}$. Notice that $\mathrm{tr}\,|y'|<\mathrm{tr}\,|y|$. Since both $\varphi$ and $\psi$ commute with the actions of $F_i$, by 
\begin{align*}
F_i(x^-v_{\zeta}\otimes y'^-\eta_{\lambda})=&x^-v_{\zeta}\otimes F_iy'^-\eta_{\lambda}+F_ix^-v_{\zeta}\otimes K_{-i}y'^-\eta_{\lambda}\\
=&x^-v_{\zeta}\otimes y^-\eta_{\lambda}+(\theta_ix)^-v_{\zeta}\otimes v^{\langle -i,\lambda-|y'|\rangle}y'^-\eta_{\lambda}
\end{align*}
and the inductive hypothesis, we have 
\begin{align*}
\varphi\psi(x^-v_{\zeta}\otimes y^-\eta_{\lambda})=&\varphi\psi(F_i(x^-v_{\zeta}\otimes y'^-\eta_{\lambda}))-\varphi\psi((\theta_ix)^-v_{\zeta}\otimes v^{\langle -i,\lambda-|y'|\rangle}y'^-\eta_{\lambda})\\
=&F_i(x^-v_{\zeta}\otimes y'^-\eta_{\lambda})-(\theta_ix)^-v_{\zeta}\otimes v^{\langle -i,\lambda-|y'|\rangle}y'^-\eta_{\lambda}=x^-v_{\zeta}\otimes y^-\eta_{\lambda}.
\end{align*}
Hence $\varphi$ and $\psi$ are inverse to each other. Since $\psi$ is a $\mathbf{U}$-module homomorphism, both $\varphi$ and $\psi$ are $\mathbf{U}$-module isomorphisms.
\end{proof}


\section{Thickening realization of canonical basis of tensor product}\label{Thickening realization of canonical basis}

\subsection{Canonical basis of tensor product of Verma module and simple module}\label{Canonical basis of Verma otimes highest weight}

Let $\zeta\in X$ and $\lambda\in X^+$. The underlying vector space of the Verma module $M_{\zeta}$ is $\mathbf{f}$, thus the canonical basis of $\mathbf{f}$ can be carried over to $M_{\zeta}$. We denote it by $\mathbf{B}(M_\zeta)=\{b^-v_{\zeta};b\in \mathbf{B}\}$. Note that $M_{\zeta}$ is not integrable, and $(M_{\zeta},\mathbf{B}(M_\zeta))$ does not satisfy the definition of the based module in \cite[\S  27.1.2]{Lusztig-1993} or \cite[\S  2.1]{Bao-Wang-2016}. Nevertheless, a canonical basis for $M_{\zeta}\otimes \Lambda_{\lambda}$ still exists.

We use the same formula in \eqref{Phi-involution} to define the $\Psi$-involution on $M_{\zeta}\otimes \Lambda_{\lambda}$:
\begin{equation}\label{Theta and Psi}
\Psi(m_1\otimes m_2)=\sum_{\nu\in \mathbb{N}[I]}(-v)^{\mathrm{tr}\,\nu}\sum_{b\in \mathbf{B}_{\nu}}b^-\overline{m_1}\otimes b^{*+}\overline{m_2}\ \textrm{for any} \ m_1\in M_{\zeta},m_2\in \Lambda_{\lambda}.
\end{equation}
Note that there are still only finitely many non-zero terms in the summation. 

\begin{lemma}\label{canonical basis of Verma otime highest weight}
Let $\zeta\in X$ and $\lambda\in X^+$. Then for any $b_1\in \mathbf{B}(M_\zeta)$ and $b_2\in \mathbf{B}(\L_\lambda)$, there exists a unique $b_1\diamondsuit b_2\in \mathbb{Z}[v^{-1}][\mathbf{B}(M_\zeta)\otimes \mathbf{B}(\Lambda_{\lambda})]$ such that
$$\Psi(b_1\diamondsuit b_2)=b_1\diamondsuit b_2 \text{ and } b_1\diamondsuit b_2-b_1\otimes b_2 \in v^{-1}\mathbb{Z}[v^{-1}][\mathbf{B}(M_\zeta)\otimes \mathbf{B}(\Lambda_{\lambda})].$$
Moreover, the set $\mathbf{B}(M_\zeta \otimes \L_\l):=\mathbf{B}(M_\zeta) \diamondsuit \mathbf{B}(\L_\l)$ is a basis of $M_{\zeta}\otimes \Lambda_{\lambda}$. The transition matrix from $\mathbf{B}(M_\zeta)\otimes \mathbf{B}(\Lambda_{\lambda})$ to $\mathbf{B}(M_\zeta \otimes \L_\l)$ is unitriangular with off-diagonal entries in $v^{-1}\mathbb{Z}[v^{-1}]$.
\end{lemma}
\begin{proof} 
By the same proof as \cite[Proposition 2.4]{Bao-Wang-2016}, both $\Theta$ and $\Psi$ preserve the $\mathcal{A}$-submodule of $M_{\zeta}\otimes \Lambda_{\lambda}$ generated by $\mathbf{B}(M_\zeta)\otimes \mathbf{B}(\Lambda_{\lambda})$. Then the statement follows from the same proof of \cite[Theorem 27.3.2]{Lusztig-1993}. 
\end{proof}

The basis $\mathbf{B}(M_\zeta \otimes \L_\l)$ is the {\it canonical basis} of the tensor product $M_{\zeta}\otimes \Lambda_{\lambda}$.

\begin{lemma}\label{a otimes Id is a based map}
Let $\lambda_1,\lambda_2\in X^+,w\in W$ and $a: M_{-w\l_1} \to {^{\omega}V_w(\lambda_1)}$ be the projection map. Then $(a\otimes \mathrm{Id})(\mathbf{B}(M_{-w\lambda_1}\otimes \Lambda_{\lambda_2})) \subset \mathbf{B}({^{\omega}V_{w}(\lambda_1)}\otimes \Lambda_{\lambda_2})\sqcup \{0\}$, and the set $\mathbf{B}(M_{-w\lambda_1}\otimes \Lambda_{\lambda_2})\cap \mathrm{ker}\,(a\otimes \mathrm{Id})$ is a basis of $\mathrm{ker}\,(a\otimes \mathrm{Id})$.
\end{lemma}
\begin{proof}
By Proposition \ref{action map is a based map}, $a(\mathbf{B}(M_{-w\lambda_1}))\subset \mathbf{B}({^{\omega}V_w(\lambda_1)})\sqcup\{0\}$. Let $b_1 \in \mathbf{B}(M_{-w\lambda_1})$ and $b_2 \in \mathbf{B}(\L_{\l_2})$. We prove that
$$(a\otimes \mathrm{Id})(b_1\diamondsuit b_2)=\begin{cases}a(b_1) \diamondsuit b_2,\ &\textrm{if}\ a(b_1)\not=0;\\
0,\ &\textrm{if}\ a(b_1)=0.
\end{cases}$$
Let $\Psi$ and $\Psi'$ be the $\Psi$-involutions on ${^{\omega}\Lambda_{\lambda_1}}\otimes \Lambda_{\lambda_2}$ and $M_{-w\lambda_1}\otimes \Lambda_{\lambda_2}$ respectively. By definitions, we have $\Psi(a\otimes \mathrm{Id})=(a\otimes \mathrm{Id})\Psi':M_{-w \l_1} \otimes \L_{\l_2}\rightarrow {^{\omega}\Lambda_{\lambda_1}}\otimes \Lambda_{\lambda_2}$. By Lemma \ref{canonical basis of Verma otime highest weight}, we have $\Psi'(b_1\diamondsuit b_2)=b_1\diamondsuit b_2\in \mathbb{Z}[v^{-1}][\mathbf{B}(M_{-w\lambda_1})\otimes \mathbf{B}(\Lambda_{\lambda_2})]$ and $b_1\diamondsuit b_2-b_1\otimes b_2\in v^{-1}\mathbb{Z}[v^{-1}][\mathbf{B}(M_{-w\lambda_1})\otimes \mathbf{B}(\Lambda_{\lambda_2})]$.
By Proposition \ref{action map is a based map}, 
\begin{align*}
&\Psi(a\otimes \mathrm{Id})(b_1\diamondsuit b_2)
=(a\otimes \mathrm{Id})(b_1\diamondsuit b_2)\in \mathbb{Z}[v^{-1}][\mathbf{B}({^{\omega}\Lambda_{\lambda_1}})\otimes \mathbf{B}(\Lambda_{\lambda_2})],\\
&(a\otimes \mathrm{Id})(b_1\diamondsuit b_2)-a(b_1)\otimes b_2\in v^{-1}\mathbb{Z}[v^{-1}][\mathbf{B}({^{\omega}\Lambda_{\lambda_1}})\otimes \mathbf{B}(\Lambda_{\lambda_2})].
\end{align*}

If $a(b_1) \neq 0$, then $a(b_1) \in \mathbf{B}({^{\omega}\Lambda_{\lambda_1}})$, and $(a\otimes \mathrm{Id})(b_1\diamondsuit b_2^-\eta_{\lambda_2})$ satisfies the characterizing conditions in Theorem \ref{24.3.3}. Hence $(a\otimes \mathrm{Id})(b_1\diamondsuit b_2)=a(b_1)\diamondsuit b_2$. 

If $a(b_1)=0$, then $(a\otimes \mathrm{Id})(b_1\diamondsuit b_2)\in v^{-1}\mathbb{Z}[v^{-1}][\mathbf{B}({^{\omega}\Lambda_{\lambda_1}})\otimes \mathbf{B}(\Lambda_{\lambda_2})]$ is $\Psi$-invariant. By Theorem \ref{24.3.3}, $\mathbf{B}({^{\omega}\Lambda_{\lambda_1}})\otimes \mathbf{B}(\Lambda_{\lambda_2})\subset \mathbb{Z}[v^{-1}][\mathbf{B}({^{\omega}\Lambda_{\lambda_1}}\otimes\Lambda_{\lambda_2})]$, and $0$ is the unique $\Psi$-invariant element in $v^{-1}\mathbb{Z}[v^{-1}][\mathbf{B}({^{\omega}\Lambda_{\lambda_1}}\otimes\Lambda_{\lambda_2})]$. Hence $(a\otimes \mathrm{Id})(b_1\diamondsuit b_2)=0$.

Then the statement follows from Proposition \ref{action map is a based map} and Proposition \ref{canonical basis of demazure otimes highest weight}. 
\end{proof}

\subsection{Canonical basis of $(\mathbf{f}\theta_{\lambda}\mathbf{f})_{\zeta\odot\lambda}$}

Let $\tilde{\mathbf{B}}$ be the canonical basis of $\tilde{\mathbf{f}}$.

Let $\lambda\in X^+$. Recall that $\mathbf{f}\theta_{\lambda}\mathbf{f}$ is a subspace of $\tilde{\mathbf{f}}$ defined in \S \ref{ftheta_lambdaf}. We define
$$\mathbf{B}(\mathbf{f}\theta_{\lambda}\mathbf{f}):=\tilde{\mathbf{B}}\cap (\mathbf{f}\theta_{\lambda}\mathbf{f})\subset \tilde{\mathbf{B}}.$$
By \cite[Proposition 4.3]{Fang-Lan-2025}, the set $\mathbf{B}(\mathbf{f}\theta_{\lambda}\mathbf{f})$ is a basis of $\mathbf{f}\theta_{\lambda}\mathbf{f}$. We call it the {\it canonical basis} of $\mathbf{f}\theta_{\lambda}\mathbf{f}$. 

Let $\mathbf{B}(\lambda)=\bigcap_{i\in I}(\mathbf{B}\setminus \mathbf{f}\theta_i^{\langle i,\lambda\rangle+1})$. By \cite[Theorem 14.4.11]{Lusztig-1993}, we have
\begin{equation}\label{14.4.11}
\mathbf{B}(\lambda)=\{b\in \mathbf{B};b^-\eta_{\lambda}\neq0\}. 
\end{equation}
By \cite[\S 5.1, (14)]{Fang-Lan-2025}, the right multiplication by $\theta_{\lambda}$ gives a bijection
\begin{equation}\label{Fang-Lan-2025-5.3*}
\mathbf{B}(\lambda)\rightarrow \bigcap_{i\in I}(\tilde{\mathbf{B}}\setminus \tilde{\mathbf{f}}\theta_i)\cap \mathbf{B}(\mathbf{f}\theta_{\lambda}\mathbf{f}), b\mapsto b\theta_{\lambda}.
\end{equation}

Let $\zeta\in X$ and $\lambda\in X^+$. We define 
$$\mathbf{B}((\mathbf{f}\theta_{\lambda}\mathbf{f})_{\zeta\odot\lambda}):=\{b^-v_{\zeta\odot\lambda};b\in \mathbf{B}(\mathbf{f}\theta_{\lambda}\mathbf{f})\}.$$
Then it is a basis of $(\mathbf{f}\theta_{\lambda}\mathbf{f})_{\zeta\odot\lambda}$, and we call it the {\it canonical basis} of $(\mathbf{f}\theta_{\lambda}\mathbf{f})_{\zeta\odot\lambda}$. 

\subsection{Involution and bilinear form}\label{sec:technical}

Let $\zeta\in X$ and $\lambda\in X^+$. By Lemma \ref{isomorphism varphi}, there is a $\mathbf{U}$-module isomorphism $\varphi_{\zeta,\lambda}:(\mathbf{f}\theta_{\lambda}\mathbf{f})_{\zeta\odot\lambda}\rightarrow M_{\zeta}\otimes \Lambda_{\lambda}$. In this subsection, we study the relations between $\varphi_{\zeta,\lambda}$ and the involutions, the symmetric bilinear forms on two sides. 

The underlying vector space of the Verma module $M_{\zeta}$ is $\mathbf{f}$, thus the bar-involution on $\mathbf{f}$ can be carried over to $M_{\zeta}$, that is, $\overline{x^-v_{\zeta}}=(\overline{x})^-v_{\zeta}$ for any $x\in \mathbf{f}$. We identify $M_{\zeta}$ with $\mathbf{U}/(\sum_{i\in I}\mathbf{U}E_i+\sum_{\mu\in Y}\mathbf{U}(K_{\mu}-v^{\langle \mu,\zeta\rangle}))$. Under this identification, the bar-involution of $M_{\zeta}$ is induced by the bar-involution of $\mathbf{U}$, and so $\overline{um}=\bar{u}\bar{m}$ for any $u\in \mathbf{U}$ and $m\in M_{\zeta}$. Similarly, the bar-involution of $M_{\zeta\odot\lambda}$ satisfies the same property. It restricts to a bar-involution on $(\mathbf{f}\theta_{\lambda}\mathbf{f})_{\zeta\odot\lambda}$. Recall that the tensor product $M_{\zeta}\otimes \Lambda_{\lambda}$ has a $\Psi$-involution defined by \eqref{Theta and Psi}.

\begin{lemma}\label{varphi and involution}
Let $\zeta\in X,\lambda\in X^+$ and $m\in (\mathbf{f}\theta_{\lambda}\mathbf{f})_{\zeta\odot\lambda}$. Then $\Psi\varphi_{\zeta,\lambda}(m)=\varphi_{\zeta,\lambda}(\overline{m})$.
\end{lemma}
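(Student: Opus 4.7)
The plan is to show that both $\Psi \circ \varphi_{\zeta,\lambda}$ and $\varphi_{\zeta,\lambda} \circ \overline{\phantom{z}}$ are bar-semi-linear maps intertwining the $\mathbf{U}$-action, and then to verify their agreement on a convenient generating family. Concretely, both maps send $cz$ to $\overline{c} \cdot (\text{image of } z)$ for $c \in \mathbb{Q}(v)$, and both satisfy $f(uz) = \overline{u} f(z)$ for any $u \in \mathbf{U}$: for $\Psi\varphi_{\zeta,\lambda}$ this follows from the $\mathbf{U}$-linearity of $\varphi_{\zeta,\lambda}$ (Lemma \ref{isomorphism varphi}) combined with the standard property $\Psi(um) = \overline{u}\Psi(m)$ of the $\Psi$-involution (derivable for $M_\zeta \otimes \L_\l$ exactly as in the simple case via the quasi-$\mathcal{R}$-matrix identity of \cite[Lemma 24.1.2]{Lusztig-1993}); for $\varphi_{\zeta,\lambda} \circ \overline{\phantom{z}}$ it follows from \eqref{bar-involution of fthetaf} and the $\mathbf{U}$-linearity of $\varphi_{\zeta,\lambda}$. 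Consequently the equalizer
\[
A := \bigl\{ z \in (\mathbf{f}\theta_\lambda\mathbf{f})_{\zeta\odot\lambda} \,;\, \Psi\varphi_{\zeta,\lambda}(z) = \varphi_{\zeta,\lambda}(\overline{z}) \bigr\}
\]
is a $\mathbf{U}$-submodule (and a $\mathbb{Q}(v)$-subspace) of $(\mathbf{f}\theta_\lambda\mathbf{f})_{\zeta\odot\lambda}$.

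Next, I will verify $\theta_\lambda y \in A$ for every $y \in \mathbf{f}$ by direct computation. Taking $x = 1$ in \eqref{im varphi} gives $\varphi_{\zeta,\lambda}(\theta_\lambda y) = y \otimes \eta_\lambda$. Because $\eta_\lambda$ is a highest weight vector, $b^{*+}\eta_\lambda = 0$ for every $b \in \mathbf{B}$ of positive weight, so in the definition \eqref{Theta and Psi} of $\Theta$ only the $\nu = 0$ term survives; hence $\Psi(y \otimes \eta_\lambda) = \Theta(\overline{y} \otimes \eta_\lambda) = \overline{y} \otimes \eta_\lambda$. On the other side, $\theta_\lambda$ is bar-invariant (being a product of divided powers of the bar-invariant elements $\theta_{i'}$), so $\overline{\theta_\lambda y} = \theta_\lambda \overline{y}$, and another application of \eqref{im varphi} yields $\varphi_{\zeta,\lambda}(\overline{\theta_\lambda y}) = \overline{y} \otimes \eta_\lambda$. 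The two sides match, so $\theta_\lambda y \in A$.

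To conclude, I will close $A$ under the $\mathbf{U}$-action. Since $F_i$ acts on $(\mathbf{f}\theta_\lambda\mathbf{f})_{\zeta\odot\lambda} \subset M_{\zeta \odot \lambda} = \tilde{\mathbf{f}}$ as left multiplication by $\theta_i$, iterating $F_i$'s on $\theta_\lambda y$ produces $\theta_{i_1}\cdots \theta_{i_n}\theta_\lambda y$ for any $i_1,\ldots,i_n \in I$ and $y \in \mathbf{f}$. Taking $\mathbb{Q}(v)$-linear combinations, $A$ contains every element $x \theta_\lambda y$ with $x, y \in \mathbf{f}$, which by the definition recalled in \S \ref{ftheta_lambdaf} spans all of $(\mathbf{f}\theta_\lambda\mathbf{f})_{\zeta\odot\lambda}$; therefore $A = (\mathbf{f}\theta_\lambda\mathbf{f})_{\zeta\odot\lambda}$.

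The main subtlety is the choice of generating family. The a priori natural single cyclic candidate $\theta_\lambda$ does \emph{not} suffice: via $\varphi_{\zeta,\lambda}$ it corresponds to $1 \otimes \eta_\lambda$, a highest-weight vector of weight $\zeta + \lambda$, so $\mathbf{U} \cdot \theta_\lambda = \mathbf{f}\theta_\lambda$ is only a proper $\mathbf{U}$-submodule (a homomorphic image of $M_{\zeta+\lambda}$), well short of the whole $(\mathbf{f}\theta_\lambda\mathbf{f})_{\zeta\odot\lambda} \cong M_\zeta \otimes \L_\lambda$. Passing to the larger family $\{\theta_\lambda y;y\in \mathbf{f}\}$, which sits on the Verma side $M_\zeta \otimes \eta_\lambda$ where $\Theta$ degenerates to its leading term, is precisely what makes the collapse of $\Psi$ on the image possible and what closes the argument under left multiplication by $\mathbf{f}$.
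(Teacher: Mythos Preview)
Your proof is correct and follows essentially the same approach as the paper's: verify the identity directly on the elements $\theta_\lambda y$ (where $\Theta$ collapses to the identity because $b^{*+}\eta_\lambda = 0$ for $|b|\neq 0$), and then propagate using the $F_i$-action together with the intertwining properties $\Psi(F_i m)=F_i\Psi(m)$ and $\overline{F_i z}=F_i\overline{z}$. The paper phrases this as an explicit induction on $\mathrm{tr}\,|x|$ for $z=x\theta_\lambda y$, whereas you package the same induction into the statement that the equalizer $A$ is a $\mathbf{U}$-submodule containing $\{\theta_\lambda y\}$; these are the same argument.
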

\begin{proof}
We simply denote $\varphi_{\zeta,\lambda}=\varphi$. Note that both $\Psi\varphi$ and $\varphi(\bar{\cdot})$ are $\mathbb{Q}$-linear exchanging $v$ and $v^{-1}$.

Notice that $(\mathbf{f}\theta_{\lambda}\mathbf{f})_{\zeta\odot\lambda}$ is spanned by $x^-\theta_{\lambda}^-y^-v_{\zeta\odot\lambda}$ for any homogeneous $x,y\in \mathbf{f}$. We prove by induction on $\mathrm{tr}\,|x|\in \mathbb{N}$ that $\Psi\varphi(x^-\theta_{\lambda}^-y^-v_{\zeta\odot\lambda})=\varphi(\overline{x^-\theta_{\lambda}^-y^-v_{\zeta\odot\lambda}})$. If $\mathrm{tr}\,|x|=0$, then $x\in \mathbb{Q}(v)$. By \eqref{im varphi} and \eqref{Theta and Psi}, we have 
\begin{align*}
\Psi\varphi(\theta_{\lambda}^-y^-v_{\zeta\odot\lambda})=&\Psi(y^-v_{\zeta}\otimes \eta_{\lambda})=\Theta(\overline{y^-v_{\zeta}}\otimes \overline{\eta_{\lambda}})\\
=&(\overline{y})^-v_{\zeta}\otimes \eta_{\lambda}=\varphi(\theta_{\lambda}^-(\overline{y})^-v_{\zeta\odot\lambda})=\varphi(\overline{\theta_{\lambda}^-y^-v_{\zeta\odot\lambda}})   . 
\end{align*}
If $\mathrm{tr}\,|x|>0$, then it suffices to prove the statement for $x=\theta_ix'$ with $i\in I$ and homogeneous $x'\in \mathbf{f}$. Note that $\mathrm{tr}\,|x'|<\mathrm{tr}\,|x|$. By \cite[Lemma 24.1.2]{Lusztig-1993}, $\Psi(um)=\overline{u}\Psi(m)$ for any $u\in \mathbf{U}$ and $m\in M_{\zeta}\otimes \Lambda_{\lambda}$. By Lemma \ref{isomorphism varphi}, the inductive hypothesis, we have
\begin{align*}
\Psi\varphi(x^-\theta_{\lambda}^-y^-v_{\zeta\odot\lambda})=&\Psi\varphi(F_i{x'}^-\theta_{\lambda}^-y^-v_{\zeta\odot\lambda})=\Psi(F_i\varphi({x'}^-\theta_{\lambda}^-y^-v_{\zeta\odot\lambda}))\\
=&\overline{F_i}\Psi\varphi({x'}^-\theta_{\lambda}^-y^-v_{\zeta\odot\lambda})=F_i\varphi(\overline{{x'}^-\theta_{\lambda}^-y^-v_{\zeta\odot\lambda}})\\
=&\varphi(F_i\overline{{x'}^-\theta_{\lambda}^-y^-v_{\zeta\odot\lambda}})=\varphi(\overline{F_i{x'}^-\theta_{\lambda}^-y^-v_{\zeta\odot\lambda}})=\varphi(\overline{x^-\theta_{\lambda}^-y^-v_{\zeta\odot\lambda}}),
\end{align*}
as desired.
\end{proof}

We define a symmetric bilinear form 
$(\,,\,)_{\otimes}:(M_{\zeta}\otimes \Lambda_{\lambda})\times (M_{\zeta}\otimes \Lambda_{\lambda})\rightarrow \mathbb{Q}(v)$
by
\begin{equation}\label{definition the bilinear form on tensor product}
(x_1^-v_{\zeta}\otimes m_2,{x'_1}^-v_{\zeta}\otimes m'_2)_{\otimes}=(x_1,x'_1)_{\mathbf{f}}(m_2,m'_2)_{\Lambda_{\lambda}}\textrm{for}\ x_1,x'_1\in \mathbf{f},m_2,m'_2\in \Lambda_{\lambda}.
\end{equation}
For any $i\in I$, let ${_{i}r}:\tilde{\mathbf{f}}\rightarrow \tilde{\mathbf{f}}$ be the linear map defined in \cite[\S 1.2.13]{Lusztig-1993}. For any homogeneous $x,y\in \mathbf{f}$, we have 
\begin{equation}\label{irxthetalambday}
{_{i}r}(x\theta_{\lambda}y)={_{i}r}(x)\theta_{\lambda}y+v^{\langle i,|x|-\lambda\rangle}x\theta_{\lambda}\,{_{i}r}(y)\in \mathbf{f}\theta_{\lambda}\mathbf{f}.
\end{equation}
Hence ${_{i}r}:\tilde{\mathbf{f}}\rightarrow \tilde{\mathbf{f}}$ can restrict to ${_{i}r}:\mathbf{f}\theta_{\lambda}\mathbf{f}\rightarrow \mathbf{f}\theta_{\lambda}\mathbf{f}$. Identifying the $\mathbb{Q}(v)$ vector spaces $\mathbf{f}\theta_{\lambda}\mathbf{f}\cong (\mathbf{f}\theta_{\lambda}\mathbf{f})_{\zeta\odot\lambda}$, we have a linear map ${_{i}r}:(\mathbf{f}\theta_{\lambda}\mathbf{f})_{\zeta\odot\lambda}\rightarrow (\mathbf{f}\theta_{\lambda}\mathbf{f})_{\zeta\odot\lambda}$ such that ${_ir}(x^-v_{\zeta\odot\lambda})=({_ir(x)})^-v_{\zeta\odot\lambda}$ for any $x\in \mathbf{f}\theta_{\lambda}\mathbf{f}$. Let $\epsilon_i:M_{\zeta}\otimes \Lambda_{\lambda}\rightarrow M_{\zeta}\otimes \Lambda_{\lambda}$ be the linear map defined in \cite[\S 18.1.3]{Lusztig-1993}, that is, for any $x_1\in \mathbf{f}$ and $m_2\in \Lambda_{\lambda}$,
\begin{equation}\label{definition of epsiloni}
\epsilon_i(x_1^-v_{\zeta}\otimes m_2)=({_{i}r}(x_1))^-v_{\zeta}\otimes K_{-i}m_2+(v-v^{-1})x_1^-v_{\zeta}\otimes K_{-i}E_i m_2.
\end{equation}

\begin{lemma}\label{varphi and inner product}
Let $\zeta\in X,\lambda\in X^+$ and $i\in I$. Then
\begin{enumerate}
\item $\epsilon_i\varphi_{\zeta,\lambda}=\varphi_{\zeta,\lambda}\,{_{i}r}:(\mathbf{f}\theta_{\lambda}\mathbf{f})_{\zeta\odot\lambda}\rightarrow M_{\zeta}\otimes \Lambda_{\lambda}$;
\item $(F_i(x_1^-v_{\zeta}\otimes m_2),{x'_1}^-v_{\zeta}\otimes m'_2)_{\otimes}=(1-v^{-2})^{-1}(x_1^-v_{\zeta}\otimes m_2,\epsilon_i({x'_1}^-v_{\zeta}\otimes m'_2))_{\otimes}$ for any $x_1,x'_1\in \mathbf{f}$ and $m_2,m'_2\in \Lambda_{\lambda}$;
\item $(\varphi_{\zeta,\lambda}(z^-v_{\zeta\odot\lambda}),\varphi_{\zeta,\lambda}({z'}^-v_{\zeta\odot\lambda}))_{\otimes}=(\theta_{\lambda},\theta_{\lambda})_{\tilde{\mathbf{f}}}^{-1}(z,z')_{\tilde{\mathbf{f}}}$ for any $z,z'\in \mathbf{f}\theta_{\lambda}\mathbf{f}$.
\end{enumerate}
\end{lemma}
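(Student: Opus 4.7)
The plan is to prove the three statements in order, with (3) building on (1) and (2). For (1), I would verify the identity on a spanning element $x\theta_\lambda y$ by direct computation using \eqref{im varphi}, \eqref{irxthetalambday}, and \eqref{definition of epsiloni}. Expanding the right-hand side ${_i r}(x\theta_\lambda y) = {_i r}(x)\theta_\lambda y + v^{\langle i,|x|-\lambda\rangle} x\theta_\lambda\,{_i r}(y)$ and applying $\varphi_{\zeta,\lambda}$, and expanding $\epsilon_i\varphi_{\zeta,\lambda}(x\theta_\lambda y)$ via \eqref{definition of epsiloni}, one reorganizes $v$-powers using the standard compatibility between $r$ and ${_i r}$ on $\mathbf{f}$ to see both reduce to the same expression; crucially, ${_i r}(\theta_{j'}) = 0$ for $i \in I, j' \in I'$ prevents any new cross terms. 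Alternatively, check (1) on $\theta_\lambda$ (both sides vanish, since ${_i r}(\theta_\lambda) = 0$ and $\epsilon_i(1\otimes\eta_\lambda) = 0$ because $E_i\eta_\lambda = 0$) and then propagate via compatibility with left multiplication by $\theta_j$.

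For (2), write $F_i(m_1\otimes m_2) = F_i m_1\otimes K_{-i}m_2 + m_1\otimes F_i m_2$ from $\Delta(F_i) = 1\otimes F_i + F_i\otimes K_{-i}$. Apply on the $\mathbf{f}$-factor the adjointness $(\theta_i z, z')_{\mathbf{f}} = (1-v^{-2})^{-1}(z,{_i r}(z'))_{\mathbf{f}}$ built into the defining characterization of $(\cdot,\cdot)_{\mathbf{f}}$ from \S \ref{Canonical basis}, and on the $\Lambda_\lambda$-factor the defining relation $(um,m')_{\Lambda_\lambda} = (m,\rho(u)m')_{\Lambda_\lambda}$ with $\rho(F_i) = vK_{-i}E_i$. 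Expanding $(m_1\otimes m_2,\epsilon_i(m'_1\otimes m'_2))_\otimes$ via \eqref{definition of epsiloni} and matching terms, the two sides agree once the scalar identity $(1-v^{-2})^{-1}(v-v^{-1}) = v$ is applied.

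For (3), set $B(z,z') = (\varphi_{\zeta,\lambda}(z),\varphi_{\zeta,\lambda}(z'))_\otimes$ and $B'(z,z') = (\theta_\lambda,\theta_\lambda)_{\tilde{\mathbf{f}}}^{-1}(z,z')_{\tilde{\mathbf{f}}}$; both are symmetric bilinear forms on $(\mathbf{f}\theta_\lambda\mathbf{f})_{\zeta\odot\lambda}$. Combining (1), (2), and the $\mathbf{U}$-equivariance of $\varphi_{\zeta,\lambda}$ (Lemma \ref{isomorphism varphi}) with the fact that $F_i$ acts on $(\mathbf{f}\theta_\lambda\mathbf{f})_{\zeta\odot\lambda}\subset\tilde{\mathbf{f}}$ by left multiplication by $\theta_i$, one obtains
$$B(\theta_i z, z') = (1-v^{-2})^{-1}B(z,{_i r}(z')) \quad \text{for all } i\in I,$$
and the same recursion for $B'$ follows from the adjointness of $(\cdot,\cdot)_{\tilde{\mathbf{f}}}$. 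Now induct on $\mathrm{tr}|x|$ where $z = x\theta_\lambda y$: when $\mathrm{tr}|x|>0$, write $x = \sum_j\theta_j x^{(j)}$ and apply the recursion (noting ${_i r}$ preserves the space by \eqref{irxthetalambday}); by symmetry of the forms, apply the same reduction to $z'$. This leaves the base case $z = \theta_\lambda y$, $z' = \theta_\lambda y'$.

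In the base case, \eqref{im varphi} with $x=1$ gives $\varphi_{\zeta,\lambda}(\theta_\lambda y) = y\otimes\eta_\lambda$, so $B(\theta_\lambda y,\theta_\lambda y') = (y,y')_{\mathbf{f}}$. For $B'$, expand $(\theta_\lambda y,\theta_\lambda y')_{\tilde{\mathbf{f}}} = (\theta_\lambda\otimes y, r(\theta_\lambda y'))_{\tilde{\mathbf{f}}\otimes\tilde{\mathbf{f}}}$; since $\theta_\lambda$ has weight supported on $I'$ while $y,y'\in\mathbf{f}$ have weights supported on $I$, only the term $\theta_\lambda\otimes y'$ in the expansion of $r(\theta_\lambda y')$ contributes, yielding $B'(\theta_\lambda y,\theta_\lambda y') = (y,y')_{\mathbf{f}}$ as well. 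The main obstacle is precisely this base-case weight analysis of $r(\theta_\lambda y')$, which crucially uses the orthogonality between the original generators $\theta_j$ ($j\in I$) and the thickening generators $\theta_{j'}$ ($j'\in I'$) -- exactly the feature that the enlargement from $(I,\cdot)$ to $(\tilde{I},\cdot)$ was designed to provide.
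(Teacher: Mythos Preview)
Your plan for (2) and (3) matches the paper's proof essentially exactly. For (3), the paper inducts only on $\mathrm{tr}|x|$ for $z = x\theta_\lambda y$, keeping $z'$ arbitrary throughout and handling it directly in the base case (the key observation being $(\eta_\lambda,{x'_1}^-\eta_\lambda)_{\Lambda_\lambda}=0$ unless $|x'_1|=0$); your double reduction via symmetry works equally well and makes the base-case weight analysis a little simpler.

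For (1) there is a genuine gap in your \emph{alternative} approach. Starting from $\theta_\lambda$ and propagating by left multiplication with $\theta_j$ ($j\in I$) only reaches $\mathbf{f}\theta_\lambda$, and in general $\mathbf{f}\theta_\lambda\subsetneq(\mathbf{f}\theta_\lambda\mathbf{f})_{\zeta\odot\lambda}$: under $\varphi_{\zeta,\lambda}$ this would say $M_\zeta\otimes\Lambda_\lambda$ is generated over $\mathbf{U}^-$ by $1\otimes\eta_\lambda$, which fails by a weight-space dimension count as soon as $\lambda\ne 0$. The paper's proof of (1) is exactly your alternative but with the correct base case $\theta_\lambda y$ for \emph{all} $y\in\mathbf{f}$, where the verification is still one line:
\[
\epsilon_i(y\otimes\eta_\lambda)={_i r}(y)\otimes K_{-i}\eta_\lambda=v^{-\langle i,\lambda\rangle}{_i r}(y)\otimes\eta_\lambda=\varphi_{\zeta,\lambda}\bigl(v^{-\langle i,\lambda\rangle}\theta_\lambda\,{_i r}(y)\bigr)=\varphi_{\zeta,\lambda}\bigl({_i r}(\theta_\lambda y)\bigr).
\]
The inductive step uses the twisted-derivation identities $\epsilon_i F_j = v^{i\cdot j}F_j\epsilon_i + \delta_{i,j}$ and ${_i r}(\theta_j z)=v^{i\cdot j}\theta_j\,{_i r}(z)+\delta_{i,j}z$, together with $\varphi_{\zeta,\lambda}F_j=F_j\varphi_{\zeta,\lambda}$. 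Your primary ``direct computation'' could also be made to work, but it forces you through the commutator formula for $E_i x_1^-$ on $\eta_\lambda$ (Lusztig~3.1.6) to unwind the $K_{-i}E_i x_1^-\eta_\lambda$ term in $\epsilon_i$, and then to recombine with the compatibility between $r$ and ${_i r}$ --- considerably messier than the two-line induction.
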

\begin{proof}
We simply denote $\varphi_{\zeta,\lambda}=\varphi$.

(1) Notice that $(\mathbf{f}\theta_{\lambda}\mathbf{f})_{\zeta\odot\lambda}$ is spanned by $x^-\theta_{\lambda}^-y^-v_{\zeta\odot\lambda}$ for any homogeneous $x,y\in \mathbf{f}$. We prove by induction on $\mathrm{tr}\,|x|\in \mathbb{N}$ that $\epsilon_i\varphi(x^-\theta_{\lambda}^-y^-v_{\zeta\odot\lambda})=\varphi\,{_{i}r}(x^-\theta_{\lambda}^-y^-v_{\zeta\odot\lambda})$. If $\mathrm{tr}\,|x|=0$, then $x\in \mathbb{Q}(v)$. By definitions, \eqref{im varphi}, \eqref{definition of epsiloni} and \eqref{irxthetalambday}, we have 
\begin{align*}
\epsilon_i\varphi(\theta_{\lambda}^-y^-v_{\zeta\odot\lambda})\!=&\epsilon_i(y^-v_{\zeta}\otimes \eta_{\lambda})=({_{i}r}(y))^-v_{\zeta}\otimes K_{-i}\eta_{\lambda}+(v-v^{-1})y^-v_{\zeta}\otimes K_{-i}E_i\eta_{\lambda}\\
=&v^{-\langle i,\lambda\rangle}({_{i}r}(y))^-v_{\zeta}\otimes \eta_{\lambda}\!=\!\varphi(v^{-\langle i,\lambda\rangle}\theta_{\lambda}^-({_{i}r}(y))^-v_{\zeta\odot\lambda})\!=\!\varphi\,{_{i}r}(\theta_{\lambda}^-y^-v_{\zeta\odot\lambda}).
\end{align*}
If $\mathrm{tr}\,|x|>0$, then it suffices to prove the statement for $x=\theta_jx'$ with $i\in I$ and homogeneous $x'\in \mathbf{f}$. Note that $\mathrm{tr}\,|x'|<\mathrm{tr}\,|x|$. By Lemma \ref{isomorphism varphi}, \cite[\S 18.1.3 \& \S 15.1.2 \& Lemma 15.1.4]{Lusztig-1993} and the inductive hypothesis, we have 
\begin{align*}
&\epsilon_i\varphi(x^-\theta_{\lambda}^-y^-v_{\zeta\odot\lambda})=\epsilon_i\varphi(F_j{x'}^-\theta_{\lambda}^-y^-v_{\zeta\odot\lambda})=\epsilon_i(F_j\varphi({x'}^-\theta_{\lambda}^-y^-v_{\zeta\odot\lambda}))\\
=&v^{i\cdot j}F_j\epsilon_i\varphi({x'}^-\theta_{\lambda}^-y^-v_{\zeta\odot\lambda})+\delta_{i,j}\varphi({x'}^-\theta_{\lambda}^-y^-v_{\zeta\odot\lambda})\\
=&v^{i\cdot j}F_j\varphi\,{_{i}r}({x'}^-\theta_{\lambda}^-y^-v_{\zeta\odot\lambda})+\delta_{i,j}\varphi({x'}^-\theta_{\lambda}^-y^-v_{\zeta\odot\lambda})\\
=&v^{i\cdot j}\varphi(F_j\,{_{i}r}({x'}^-\theta_{\lambda}^-y^-v_{\zeta\odot\lambda}))+\delta_{i,j}\varphi({x'}^-\theta_{\lambda}^-y^-v_{\zeta\odot\lambda})\\
=&\varphi(v^{i\cdot j}\theta_j\,{_{i}r}({x'}^-\theta_{\lambda}^-y^-v_{\zeta\odot\lambda})+\delta_{i,j}{x'}^-\theta_{\lambda}^-y^-v_{\zeta\odot\lambda})=\varphi\,{_{i}r}({x'}^-\theta_{\lambda}^-y^-v_{\zeta\odot\lambda}),
\end{align*}
as desired.

(2) By definitions and \cite[Proposition 19.1.2 \& \S 1.2.13]{Lusztig-1993}, we have
\begin{align*}
&(F_i(x_1^-v_{\zeta}\otimes m_2),{x'_1}^-v_{\zeta}\otimes m'_2)_{\otimes}=\!(x_1^-v_{\zeta}\otimes F_im_2+F_ix_1^-v_{\zeta}\otimes K_{-i}m_2,{x'_1}^v_{\zeta}\otimes m'_2)_{\otimes}\\
=&(x_1,x'_1)_{\mathbf{f}}(F_im_2,m'_2)_{\Lambda_{\lambda}}+(\theta_ix_1,x'_1)_{\mathbf{f}}(K_{-i}m_2,m'_2)_{\Lambda_{\lambda}}\\
=&(x_1,x'_1)_{\mathbf{f}}(m_2,vK_{-i}E_im'_2)_{\Lambda_{\lambda}}+(1-v^{-2})^{-1}(x_1,{_{i}}r(x'_1))_{\mathbf{f}}(m_2,K_{-i}m'_2)_{\Lambda_{\lambda}}\\
=&(1-v^{-2})^{-1}(x_1^-v_{\zeta}\otimes m_2,(v-v^{-1}){x'_1}^-v_{\zeta}\otimes K_{-i}E_im'_2+({_{i}}r(x'_1))^-v_{\zeta}\otimes K_{-i}m'_2)_{\otimes}\\
=&(1-v^{-2})^{-1}(x_1^-v_{\zeta}\otimes m_2,\epsilon_i({x'_1}^-v_{\zeta}\otimes m'_2))_{\otimes}.
\end{align*}

(3) Notice that $\mathbf{f}\theta_{\lambda}\mathbf{f}$ is spanned by $x\theta_{\lambda}y$ for any homogeneous $x,y\in \mathbf{f}$. We prove by induction on $\mathrm{tr}\,|x|\in \mathbb{N}$ the statement for any $z=x\theta_{\lambda}y$ and $z'=x'\theta_{\lambda}y'$. If $\mathrm{tr}\,|x|=0$, then $x\in \mathbb{Q}(v)$. On the one hand, write $r(x')=\sum x'_1\otimes x'_2$ with homogeneous $x'_1,x'_2\in \mathbf{f}$, by \eqref{im varphi} and \cite[Proposition 19.1.2]{Lusztig-1993}, we have
\begin{align*}
&(\varphi(\theta_{\lambda}^-y^-v_{\zeta\odot\lambda}),\varphi({x'}^-\theta_{\lambda}^-{y'}^-v_{\zeta\odot\lambda}))_{\otimes}=(y^-v_{\zeta}\otimes \eta_{\lambda},\sum v^{|x'_2|\cdot |\theta_{\lambda}|}{x'_2}^-{y'}^-v_{\zeta}\otimes {x'_1}^-\eta_{\lambda})_{\otimes}\\
=&\sum v^{|x'_2|\cdot |\theta_{\lambda}|}(y,x'_2y')_{\mathbf{f}}(\eta_{\lambda},{x'_1}^-\eta_{\lambda})_{\Lambda_{\lambda}}=v^{|x'|\cdot |\theta_{\lambda}|}(y,x'y')_{\mathbf{f}}.
\end{align*}
On the other hand, by \cite[Proposition 1.2.3]{Lusztig-1993} and the fact that $(\tilde{\mathbf{f}}_{\nu},\tilde{\mathbf{f}}_{\nu'})_{\tilde{\mathbf{f}}}=0$ for any $\nu\not=\nu'$ in $\mathbb{N}[\tilde{I}]$ (see the proof of \cite[Proposition 1.2.3]{Lusztig-1993}), we have 
\begin{align*}
(\theta_{\lambda}y,x'\theta_{\lambda}y')_{\tilde{\mathbf{f}}}=&(\theta_{\lambda}\otimes y,r(x'\theta_{\lambda}y'))_{\tilde{\mathbf{f}}\otimes\tilde{\mathbf{f}}}=(\theta_{\lambda}\otimes y,v^{|x'|\cdot|\theta_{\lambda}|}\theta_{\lambda}\otimes x'y')_{\tilde{\mathbf{f}}\otimes\tilde{\mathbf{f}}}\\
=&v^{|x'|\cdot|\theta_{\lambda}|}(\theta_{\lambda},\theta_{\lambda})_{\tilde{\mathbf{f}}}(y,x'y')_{\tilde{\mathbf{f}}}.
\end{align*}
It is clear that $(y,x'y')_{\mathbf{f}}=(y,x'y')_{\tilde{\mathbf{f}}}$, and so $$(\varphi(\theta_{\lambda}^-y^-v_{\zeta\odot\lambda}),\varphi({x'}^-\theta_{\lambda}^-{y'}^-v_{\zeta\odot\lambda}))_{\otimes}=(\theta_{\lambda},\theta_{\lambda})_{\tilde{\mathbf{f}}}^{-1}(\theta_{\lambda}y,x'\theta_{\lambda}y')_{\tilde{\mathbf{f}}}.$$
If $\mathrm{tr}\,|x|>0$, then it suffices to prove the statement for any $x=\theta_ix''$ with $i\in I$ homogeneous $x''\in \mathbf{f}$. Note that $\mathrm{tr}\,|x''|<\mathrm{tr}\,|x|$. By Lemma \ref{isomorphism varphi}, (2), (1), the inductive hypothesis and \cite[\S  1.2.13]{Lusztig-1993}, we have 
\begin{align*}
&(\varphi(x^-\theta_{\lambda}^-y^-v_{\zeta\odot\lambda}),\varphi({x'}^-\theta_{\lambda}^-{y'}^-v_{\zeta\odot\lambda}))_{\otimes}=(\varphi(F_i{x''}^-\theta_{\lambda}^-y^-v_{\zeta\odot\lambda}),\varphi({x'}^-\theta_{\lambda}^-{y'}^-v_{\zeta\odot\lambda}))_{\otimes}\\
=&(F_i\varphi({x''}^-\theta_{\lambda}^-y^-v_{\zeta\odot\lambda}),\varphi({x'}^-\theta_{\lambda}^-{y'}^-v_{\zeta\odot\lambda}))_{\otimes}\\
=&(1-v^{-2})^{-1}(\varphi({x''}^-\theta_{\lambda}^-y^-v_{\zeta\odot\lambda}),\epsilon_i\varphi({x'}^-\theta_{\lambda}^-{y'}^-v_{\zeta\odot\lambda}))_{\otimes}\\
=&(1-v^{-2})^{-1}(\varphi({x''}^-\theta_{\lambda}^-y^-v_{\zeta\odot\lambda}),\varphi\,{_{i}r}({x'}^-\theta_{\lambda}^-{y'}^-v_{\zeta\odot\lambda}))_{\otimes}\\
=&(1-v^{-2})^{-1}(\theta_{\lambda},\theta_{\lambda})_{\tilde{\mathbf{f}}}^{-1}(x''\theta_{\lambda}y,\,{_{i}r}(x'\theta_{\lambda}y'))_{\tilde{\mathbf{f}}}
=(\theta_{\lambda},\theta_{\lambda})_{\tilde{\mathbf{f}}}^{-1}(\theta_ix''\theta_{\lambda}y,x'\theta_{\lambda}y')_{\tilde{\mathbf{f}}}\\
=&(\theta_{\lambda},\theta_{\lambda})_{\tilde{\mathbf{f}}}^{-1}(x\theta_{\lambda}y,x'\theta_{\lambda}y')_{\tilde{\mathbf{f}}},
\end{align*}
as desired.
\end{proof}

\subsection{Realization of canonical basis of tensor product}

We prove that the thickening realization of the tensor product established in \S \ref{ftheta_lambdaf} is compatible with the canonical basis.

\begin{theorem}\label{positivity of transition matrix}
Let $\zeta\in X,\lambda\in X^+$. Then 
\begin{enumerate}
\item there are bijections
$$\xymatrix@C=1.5cm{\mathbf{B}((\mathbf{f}\theta_{\lambda}\mathbf{f})_{\zeta \odot\lambda}) \ar@<.5ex>[r]^{\varphi_{\zeta,\lambda}} &\mathbf{B}(M_{\zeta}\otimes \Lambda_{\lambda}); \ar@<.5ex>[l]^{\psi_{\zeta,\lambda}}}$$
\item let $\tilde{b}\in \mathbf{B}(\mathbf{f}\theta_{\lambda}\mathbf{f})$ with $r(\tilde{b})=\sum_{\tilde{b}_1,\tilde{b}_2\in \tilde{\mathbf{B}}}c_{\tilde{b}_1,\tilde{b}_2}^{\tilde{b}}\tilde{b}_1\otimes\tilde{b}_2$, then
$$\varphi_{\zeta,\lambda}(\tilde{b}^-v_{\zeta\odot\lambda})=\sum_{b_1\in \mathbf{B},b_2^-\eta_{\lambda}\in \mathbf{B}(\Lambda_{\lambda})}c_{b_2\theta_{\lambda_2},b_1}^{\tilde{b}}b_1^-v_{\zeta}\otimes b_2^-\eta_{\lambda}.$$
\end{enumerate}
\end{theorem}

\begin{remark}\label{structure constant remark}
A similar result to part (1) that $\psi_{\zeta,\lambda}$ is a bijection was established by Li in \cite[Theorem 7.17]{Li-2014} using perverse sheaves. Our approach is different and leads to the comparison of the entries of the transition matrix with the structure constants of the comultiplication in part (2). 
\end{remark}

\begin{proof}
We simply denote $\varphi_{\zeta,\lambda}=\varphi$. For any $\tilde{b}\in \mathbf{B}(\mathbf{f}\theta_{\lambda}\mathbf{f})\subset \tilde{\mathbf{B}}$, we write $r(\tilde{b})=\sum_{\tilde{b}_1,\tilde{b}_2\in \tilde{\mathbf{B}}}c_{\tilde{b}_1,\tilde{b}_2}^{\tilde{b}} \tilde{b}_1\otimes \tilde{b}_2$. Then $c_{\tilde{b}_1,\tilde{b}_2}^{\tilde{b}}\in \mathbb{N}[v,v^{-1}]$ by the positivity property of $\tilde{\mathbf{B}}$ with respect to the comultiplication in Theorem \ref{14.4.13}. By the definition of $\varphi$, we have $$\varphi(\tilde{b}^-v_{\zeta\odot\lambda})=\sum c_{\tilde{b}_1,\tilde{b}_2}^{\tilde{b}}\tilde{b}_2^-v_{\zeta}\otimes \phi_{\lambda}^{-1}(\tilde{b}_1^-\eta_{0\odot\lambda}),$$
where the summation is taken over $\tilde{b}_1\in \mathbf{B}(\mathbf{f}\theta_{\lambda}\mathbf{f})$ and $\tilde{b}_2\in \mathbf{B}$ with $\tilde{b}_1^-\eta_{0\odot\lambda}\not=0$. Since $\tilde{b}_1^-\eta_{0\odot\lambda}\not=0$, by \eqref{14.4.11}, we have $\tilde{b}_1\in \tilde{\mathbf{B}}(0\odot\lambda)\subset \bigcap_{i\in I}(\tilde{\mathbf{B}}\setminus \tilde{\mathbf{f}}\theta_i)$. Then by \eqref{Fang-Lan-2025-5.3*}, there exists a unique $b_1\in\mathbf{B}(\lambda)$ such that $\tilde{b}_1=b_1\theta_{\lambda}$. Hence
\begin{align}\label{varphib integral}
\varphi(\tilde{b}^-v_{\zeta\odot\lambda})=\sum c_{b_1\theta_{\lambda},b_2}^{\tilde{b}}b_2^-v_{\zeta}\otimes b_1^-\eta_{\lambda}\in \mathbb{N}[v,v^{-1}][\mathbf{B}(M_{\zeta})\otimes \mathbf{B}(\Lambda_{\lambda})].   
\end{align}

Let $\mathbf{A}=\mathbb{Q}[[v^{-1}]]\cap \mathbb{Q}(v)$ and $L(M_{\zeta}\otimes \Lambda_{\lambda})$ be the $\mathbf{A}$-submodule of $M_{\zeta}\otimes \Lambda_{\lambda}$ generated by $\mathbf{B}(M_{\zeta})\otimes \mathbf{B}(\Lambda_{\lambda})$. By \cite[Theorem 14.2.3 \& Proposition 19.3.3]{Lusztig-1993}, the basis $\mathbf{B}(M_{\zeta})\otimes \mathbf{B}(\Lambda_{\lambda})$ of $M_{\zeta}\otimes \Lambda_{\lambda}$ is almost orthonormal (in the sense of \cite[\S  14.2.1]{Lusztig-1993}) with respect to the bilinear form $(\,,\,)_{\otimes}$ defined in \eqref{definition the bilinear form on tensor product}. By \cite[Theorem 14.2.3]{Lusztig-1993} and Lemma \ref{varphi and inner product}, we have $(\varphi(\tilde{b}^-v_{\zeta\odot\lambda}),\varphi(\tilde{b}^-v_{\zeta\odot\lambda}))_{\otimes}=(\theta_{\lambda},\theta_{\lambda})_{\tilde{\mathbf{f}}}^{-1}(\tilde{b},\tilde{b})_{\tilde{\mathbf{f}}}\in 1+v^{-1}\mathbf{A}$. By \cite[Lemma 14.2.2]{Lusztig-1993}, there exists ${b'_1}^-v_{\zeta}\otimes {b'_2}^-\eta_{\lambda}\in \mathbf{B}(M_{\zeta})\otimes \mathbf{B}(\Lambda_{\lambda})$ such that 
$$\varphi(\tilde{b}^-v_{\zeta\odot\lambda})=\pm {b'_1}^-v_{\zeta}\otimes {b'_2}^-\eta_{\lambda}\ \mathrm{mod}\,v^{-1}L(M_{\zeta}\otimes \Lambda_{\lambda}).$$
By \eqref{varphib integral} and $\mathbb{N}[v,v^{-1}]\cap\mathbf{A}=\mathbb{N}[v^{-1}]$, we have 
\begin{align*}
\varphi(\tilde{b}^-v_{\zeta\odot\lambda})={b'_1}^-v_{\zeta}\otimes {b'_2}^-\eta_{\lambda}\ \mathrm{mod}\,v^{-1}\mathbb{N}[v^{-1}][\mathbf{B}(M_{\zeta})\otimes \mathbf{B}(\Lambda_{\lambda})].
\end{align*}
By \cite[Theorem 14.2.3]{Lusztig-1993} and Lemma \ref{varphi and involution}, we have $$\overline{\tilde{b}^-v_{\zeta\odot\lambda}}=(\overline{\tilde{b}})^-v_{\zeta\odot\lambda}=\tilde{b}^-v_{\zeta\odot\lambda},\ \Psi\varphi(\tilde{b}^-v_{\zeta\odot\lambda})=\varphi(\overline{\tilde{b}^-v_{\zeta\odot\lambda}})=\varphi(\tilde{b}^-v_{\zeta\odot\lambda}).$$ Hence $\varphi(\tilde{b}^-v_{\zeta\odot\lambda})$ satisfies the characterizing conditions in Lemma \ref{canonical basis of Verma otime highest weight}, and so $\varphi(\tilde{b}^-v_{\zeta\odot\lambda})={b'_1}^-v_{\zeta}\diamondsuit {b'_2}^-\eta_{\lambda}\in \mathbf{B}(M_{\zeta}\otimes \Lambda_{\lambda})$. Hence $\varphi(\mathbf{B}((\mathbf{f}\theta_{\lambda}\mathbf{f})_{\zeta\odot\lambda}))\subset \mathbf{B}(M_{\zeta}\otimes \Lambda_{\lambda})$. Now part (1) follows from Lemma \ref{isomorphism varphi}, and part (2) follows from \eqref{varphib integral}.
\end{proof}

Combining with Lemma \ref{a otimes Id is a based map}, we obtain

\begin{theorem}\label{thm:structure-trans}
Let $\lambda_1,\lambda_2\in X^+$ and $w\in W$. Then
\begin{enumerate}
\item there is a bijection
$$\mathbf{B}((\mathbf{f}\theta_{\lambda_2}\mathbf{f})_{-w\lambda_1\odot\lambda_2})\setminus\mathrm{ker}\,(a\otimes \mathrm{Id})\varphi_{-w\lambda_1,\lambda_2}\xrightarrow{(a\otimes \mathrm{Id})\varphi_{-w\lambda_1,\lambda_2}}\mathbf{B}({^{\omega}V_w(\lambda_1)}\otimes \Lambda_{\lambda_2});$$
\item let $\tilde{b}\in \mathbf{B}(\mathbf{f}\theta_{\lambda_2}\mathbf{f})$ with $r(\tilde{b})=\sum_{\tilde{b}_1,\tilde{b}_2\in \tilde{\mathbf{B}}}c_{\tilde{b}_1,\tilde{b}_2}^{\tilde{b}}\tilde{b}_1\otimes\tilde{b}_2$, then 
$$(a\otimes \mathrm{Id})\varphi_{-w\lambda_1,\lambda_2}(\tilde{b}^-v_{-w\lambda_1\odot\lambda_2})=\sum_{b_1^+\xi_{-\lambda_1}\in \mathbf{B}({^{\omega}V_w(\lambda_1)}),b_2^-\eta_{\lambda_2}\in \mathbf{B}(\Lambda_{\lambda_2})}c_{b_2\theta_{\lambda_2},b'_1}^{\tilde{b}}b_1^+\xi_{-\lambda_1}\otimes b_2^-\eta_{\lambda_2},$$
where $b'_1\in \mathbf{B}$ is the unique element such that $b_1^+\xi_{-\lambda_1}={b'_1}^- \xi_{-w \l_1}$.
\end{enumerate}
\end{theorem}

By the positivity property of $\tilde{\mathbf{B}}$ with respect to the comultiplication in Theorem \ref{14.4.13} and \cite[Lemma 14.2.2]{Lusztig-1993}, $\mathbf{B}({^{\omega}V_w(\lambda_1)}\otimes \Lambda_{\lambda_2})\subset \mathbb{N}[v^{-1}][\mathbf{B}({^{\omega}V_w(\lambda_1)})\otimes \mathbf{B}(\Lambda_{\lambda_2})]$. Moreover, by $\mathbf{B}({^{\omega}\Lambda_{\lambda}})=\bigcup_{w\in W}\mathbf{B}({^{\omega}V_{w}(\lambda)})$ and Proposition \ref{canonical basis of demazure otimes highest weight}, we have the following positivity property on the transition matrix from the basis $\mathbf{B}({^{\omega}\Lambda_{\lambda_1}})\otimes \mathbf{B}(\Lambda_{\lambda_2})$ to the canonical basis $\mathbf{B}({^{\omega}\Lambda_{\lambda_1}}\otimes \Lambda_{\lambda_2})$.  

\begin{theorem}\label{thm:trans-pos}
Let $\lambda_1,\lambda_2\in X^+$. Then 
$$\mathbf{B}({^{\omega}\Lambda_{\lambda_1}}\otimes \Lambda_{\lambda_2})\subset \mathbb{N}[v^{-1}][\mathbf{B}({^{\omega}\Lambda_{\lambda_1}})\otimes \mathbf{B}(\Lambda_{\lambda_2})].$$
\end{theorem}

\subsection{Thickening map}

Let $\lambda_1,\lambda_2\in X^+$ and $w\in W$. We define the {\it thickening map}
$$\mathrm{th}_{\lambda_1,\lambda_2,w}:\dot{\mathbf{B}}\rightarrow \tilde{\mathbf{B}}\sqcup\{0\},\dot{b}\mapsto \begin{cases}\tilde{b},\ &\textrm{if}\ \dot{b}(\xi_{-\lambda_1}\otimes \eta_{\lambda_2})\in \mathbf{B}({^{\omega}V_w(\lambda_1)\otimes \Lambda_{\lambda_2}});\\0, &\textrm{otherwise},\end{cases}$$
where in the first case, $\tilde{b}$ is the unique element in $\mathbf{B}(\mathbf{f}\theta_{\lambda_2}\mathbf{f})$ satisfying 
\begin{equation}\label{definition of thickening map}
\dot{b}(\xi_{-\lambda_1}\otimes \eta_{\lambda_2})=(a\otimes \mathrm{Id})\varphi_{-w\lambda_1,\lambda_2}(\tilde{b}^-v_{-w\lambda_1\odot\lambda_2}).   
\end{equation}
The existence and uniqueness of such $\tilde{b}$ follow from Theorem \ref{thm:structure-trans}.

For any fixed $\dot{b}\in\dot{\mathbf{B}}$, by Theorem \ref{25.2.1}, we have $\dot{b}(\xi_{-\lambda_1}\otimes \eta_{\lambda_2})\in \mathbf{B}({^{\omega}\Lambda_{\lambda_1}}\otimes \Lambda_{\lambda_2})$ for any sufficiently regular $\lambda_1,\lambda_2\in X^+$, that is, $\langle i,\lambda_1\rangle,\langle i,\lambda_2\rangle\in \mathbb{N}$ are sufficiently large for any $i\in I$. Then by $\mathbf{B}({^{\omega}\Lambda_{\lambda}})=\bigcup_{w\in W}\mathbf{B}({^{\omega}V_{w}(\lambda)})$ and Proposition \ref{canonical basis of demazure otimes highest weight}, $\dot{b}(\xi_{-\lambda_1}\otimes \eta_{\lambda_2})\in \mathbf{B}({}^{\omega}V_w(\lambda_1)\otimes\Lambda_{\lambda_2})$ for some $w\in W$. In this way, every element of the canonical basis $\dot{\mathbf{B}}$ can be realized (via the thickening map) as an element of the canonical basis $\tilde{\mathbf{B}}$ after choosing appropriate parameters $\lambda_1,\lambda_2,w$.

\subsection{Example}
We present the example in rank $1$.

Let $(I,\cdot)$ be the symmetric Cartan datum of type $A_1$ and $(Y,X,\langle,\rangle,...)$ be the simply connected root datum of type $(I,\cdot)$, that is, $I=\{i\}, i\cdot i=2$ and $Y=X=\mathbb{Z}$. Then the canonical basis of $\mathbf{f}$ is 
$$\mathbf{B}=\{\theta_i^{(k)}; k\in \mathbb{N}\}.$$ 
For dominant weights $m,n\in \mathbb{N}$, the canonical bases of ${^{\omega}\Lambda_m}$ and $\Lambda_{n}$ are 
$$\mathbf{B}({^{\omega}\Lambda_m})=\{E_i^{(k)}\xi_{-m}; 0\leqslant k\leqslant m\},\ \mathbf{B}(\Lambda_n)=\{F_i^{(l)}\eta_n; 0\leqslant l\leqslant n\}$$
respectively. The canonical basis of ${^{\omega}\Lambda_m}\otimes \Lambda_n$ is 
\begin{align*}
\mathbf{B}({^{\omega}\Lambda_m}\otimes \Lambda_n)=\{E_i^{(k)}\xi_{-m}\diamondsuit F_i^{(l)}\eta_n; 0\leqslant k\leqslant m,0\leqslant l\leqslant n\},
\end{align*}
where $E_i^{(k)}\xi_{-m}\diamondsuit F_i^{(l)}\eta_n$ is given by 
$$\begin{cases}
\sum_{s\in \mathbb{N}} v^{s(k-m-s)}\frac{[n-l+s]!}{[s]![n-l]!}E_i^{(k-s)}\xi_{-m}\otimes F_i^{(l-s)}\eta_n, &\textrm{if}\ n-m\leqslant l-k;\\
\sum_{s\in \mathbb{N}} v^{s(l-n-s)}\frac{[m-k+s]!}{[s]![m-k]!}E_i^{(k-s)}\xi_{-m}\otimes F_i^{(l-s)}\eta_n, &\textrm{if}\ n-m\geqslant l-k
\end{cases}$$
with the identification of two expressions for $n-m=l-k$ (see \cite[\S 6]{Lusztig-1992}). The canonical basis of $\dot{\mathbf{U}}$ is 
$$\dot{\mathbf{B}}=\{\theta_i^{(k)}\diamondsuit_t\,\theta_i^{(l)};t\in \mathbb{Z},k,l\in \mathbb{N}\},\ \theta_i^{(k)}\diamondsuit_t\,\theta_i^{(l)}=\begin{cases}
E_i^{(k)}1_{t-2l}F_i^{(l)},\ &\textrm{if}\ t\leqslant l-k;\\
F_i^{(l)}1_{t+2k}E_i^{(k)},\ &\textrm{if}\ t\geqslant l-k
\end{cases}$$
with the identification $E_i^{(k)}1_{t-2l}F_i^{(l)}=F_i^{(l)}1_{t+2k}E_i^{(k)}$ for $t=l-k$ (see \cite[\S 25.3.1]{Lusztig-1993}).

The thickening Cartan datum $(\tilde{I},\cdot)$ is of type $A_2$. The canonical basis of $\tilde{\mathbf{f}}$ is
$$\tilde{\mathbf{B}}=\{\theta_i^{(p)}\theta_{i'}^{(q)}\theta_i^{(r)},\theta_{i'}^{(r)}\theta_i^{(q)}\theta_{i'}^{(p)}; p,q,r\in \mathbb{N},q\geqslant p+r\}$$
with the identification $\theta_i^{(p)}\theta_{i'}^{(q)}\theta_i^{(r)}=\theta_{i'}^{(r)}\theta_i^{(q)}\theta_{i'}^{(p)}$ for $q=p+r$ (see \cite[\S 14.5.4]{Lusztig-1993}). Let $w=-1$. Then we have $(a\otimes \mathrm{Id})\varphi_{m,n}:(\mathbf{f}\theta_n\mathbf{f})_{m\odot n}\rightarrow {^{\omega}\Lambda_m}\otimes \Lambda_n$ such that 
$$E_i^{(k)}\xi_{-m}\diamondsuit F_i^{(l)}\eta_n=\begin{cases}
(a\otimes \mathrm{Id})\varphi_{m,n}(F_{i'}^{(n-l)}F_i^{(m-k+l)}F_{i'}^{(l)}v_{m\odot n}),\ &\textrm{if}\ n-m\leqslant l-k;\\
(a\otimes \mathrm{Id})\varphi_{m,n}(F_i^{(l)}F_{i'}^{(n)}F_i^{(m-k)})v_{m\odot n},\ &\textrm{if}\ n-m\geqslant l-k
\end{cases}$$
for any $0\leqslant k\leqslant m$ and $0\leqslant l\leqslant n$ (see \cite[Proposition 4.9]{Fang-Lan-2025}).

The thickening map $\mathrm{th}_{m,n,-1}:\dot{\mathbf{B}}\rightarrow \tilde{\mathbf{B}}\sqcup\{0\}$ is given by $\mathrm{th}_{m,n,-1}(\theta_i^{(k)}\diamondsuit_t\,\theta_i^{(l)})=0$ unless $t=n-m,0\leqslant k\leqslant m,0\leqslant l\leqslant n$, and 
$$\mathrm{th}_{m,n,-1}(\theta_i^{(k)}\diamondsuit_{n-m}\,\theta_i^{(l)})=\begin{cases}
\theta_{i'}^{(n-l)}\theta_i^{(m-k+l)}\theta_{i'}^{(l)},\ &\textrm{if}\ n-m\leqslant l-k;\\
\theta_i^{(l)}\theta_{i'}^{(n)}\theta_i^{(m-k)},\ &\textrm{if}\ n-m\geqslant l-k.
\end{cases}$$

\section{Structure constant of multiplication}\label{Structure constant of multiplication}

By using the thickening realizations, we compare the structure constants of the multiplication in the modified quantum group and the multiplication in the negative part of a larger quantum group with respect to their canonical bases.

\subsection{Spherical parabolic subalgebra}
Let $J \subset I$, and $\mathbf{U}_J^+,\mathbf{U}_J^-$ be the subalgebras of $\mathbf{U}$ generated by $E_j,F_j$ for any $j\in J$ respectively. By \cite[\S 23.2.1]{Lusztig-1993}, $\dot{\mathbf{U}}$ is a free $\mathbf{U}^+\otimes (\mathbf{U}^-)^{\mathrm{opp}}$-module with a basis $\{1_{\zeta};\zeta\in X\}$. Let $\dot{\mathbf{U}}_J$ be the $\mathbf{U}_J^+\otimes (\mathbf{U}^-)^{\mathrm{opp}}$-submodule generated by $\{1_{\zeta};\zeta\in X\}$, and ${^{\omega}\dot{\mathbf{U}}_J}$ be the $\mathbf{U}^+\otimes (\mathbf{U}_J^-)^{\mathrm{opp}}$-submodule generated by $\{1_{\zeta};\zeta\in X\}$. By \cite[\S 23.1.3]{Lusztig-1993}, both $\dot{\mathbf{U}}_J$ and ${^{\omega}\dot{\mathbf{U}}_J}$ are subalgebras of $\dot{\mathbf{U}}$. We call them the {\it parabolic subalgebra} and the {\it opposite parabolic subalgebra} of $\dot{\mathbf{U}}$ associated with $J$ respectively.  

A subset $J \subset I$ is called spherical if the subgroup $W_J$ of $W$ generated by $s_j$ for any $j \in J$ is finite. A spherical parabolic subalgebra of $\dot{\mathbf{U}}$ is either $\dot{\mathbf{U}}_J$ or ${}^{\omega}\dot{\mathbf{U}}_J$ for a spherical subset $J \subset I$.

For any $w \in W$, the set $J(w) := \{j \in I \mid s_j w < w\}$ is spherical. Conversely, given a spherical subset $J \subset I$, one can choose $w \in W$ sufficiently large (for example, the longest element of $W_J$ times a suitable element) such that $J = J(w)$.

\begin{lemma}\label{lem:Demazure-UJ}
Let $\l \in X^+,w \in W$ and $J=J(w)$. Then the projection map $a: M_{-w \l} \to {^{\omega}V_{w}(\lambda)}$ commutes with the action of $\dot{\mathbf{U}}_J$.
\end{lemma}
\begin{proof}
The projection map is already a $\mathbf{U}(\mathfrak{b}^-)$-module homomorphism. It remains to prove that it commutes with the action of $E_j$ for any $j\in J$. Let $r_j:\mathbf{f}\rightarrow \mathbf{f}$ and ${_jr}:\mathbf{f}\rightarrow \mathbf{f}$ be the linear maps defined in \cite[\S 1.2.13]{Lusztig-1993}. For any homogeneous $x\in \mathbf{f}$, by \cite[Proposition 3.1.6]{Lusztig-1993}, we have 
$$E_jx^--x^-E_j=\frac{K_j({_jr(x)})^--(r_j(x))^-K_{-j}}{v-v^{-1}}\in \mathbf{U}.$$
Note that $E_jv_{-w\lambda}=0$ in $M_{-w\lambda}$ and $E_j\xi_{-w\lambda}=0$ in ${^{\omega}V_w(\lambda)}$. So both $a(E_jx^-v_{-w\lambda})$ and $E_ja(x^-v_{-w\lambda})$ are equal to
$(v-v^{-1})^{-1}(K_j({_jr(x)})^-\xi_{-w\lambda}-(r_j(x))^-K_{-j}\xi_{-w\lambda})$.
\end{proof}

\subsection{A vanishing lemma}
Let $J\subset I$, and $\mathbf{f}_J$ be the subalgebra of $\mathbf{f}$ generated by $\theta_j$ for any $j\in J$. We regard $\mathbf{f}_J$ as the algebra defined in \S \ref{The algebra f} associated with the sub-Cartan datum $(J,\cdot)$ of $(I,\cdot)$, and let $\mathbf{B}_J$ be the canonical basis of $\mathbf{f}_J$. Then $\mathbf{B}_J\subset \mathbf{B}$. Similar to \cite[\S 23.2.1]{Lusztig-1993}, $\dot{\mathbf{U}}_J$ has a basis $\{b_1^+1_{\zeta}b_2^-;\zeta\in X,b_1\in \mathbf{B}_J,b_2\in \mathbf{B}\}$. Then by the proof of Theorem \ref{25.2.1}, $\{b_1\diamondsuit_{\zeta}b_2\in \dot{\mathbf{B}};\zeta\in X,b_1\in \mathbf{B}_J,b_2\in \mathbf{B}\}$ is a basis of $\dot{\mathbf{U}}_J$. Then by definition, it is equal to $\dot{\mathbf{B}}\cap\dot{\mathbf{U}}_J$. Similarly, ${^{\omega}\dot{\mathbf{U}}_J}$ has a basis $\dot{\mathbf{B}}\cap{^{\omega}\dot{\mathbf{U}}_J}=\{b_1\diamondsuit_{\zeta}b_2\in \dot{\mathbf{B}};\zeta\in X,b_1\in \mathbf{B},b_2\in \mathbf{B}_J\}$. 

Let $\dot{\tilde{\mathbf{U}}}$ be the modified quantum group corresponding to $\tilde{\mathbf{U}}$ associated with the thickening root datum $(\tilde{Y},\tilde{X},\langle\,,\,\rangle,\ldots)$, and $\dot{\tilde{\mathbf{B}}}$ be its canonical basis. Then there are natural embeddings $\dot{\mathbf{U}}\subset \dot{\tilde{\mathbf{U}}}$ and $\dot{\mathbf{B}}\subset \dot{\tilde{\mathbf{B}}}$.

\begin{lemma}\label{canonical basis of U dot acts on Verma}
Let $\lambda_1,\lambda_2\in X^+,w\in W$ and $J=J(w)$. Then for any $b_1\in \mathbf{B}_J$ with $b_1 \neq 1$ and $b_2\in \mathbf{B}(\mathbf{f}\theta_{\lambda_2}\mathbf{f})$, we have 
$$(a\otimes \mathrm{Id})\varphi_{-w\lambda_1,\lambda_2}\bigl((b_1\diamondsuit_{-w\lambda_1\odot\lambda_2}b_2)v_{-w\lambda_1\odot\lambda_2}\bigr)=0.$$
\end{lemma}
\begin{proof}
We define two auxiliary dominant weights $\tilde{\lambda}_1,\tilde{\lambda}_2\in \tilde{X}^+$ by
\begin{align*}
\langle i,\tilde{\lambda}_1\rangle=\begin{cases}
0, &\textrm{if}\ s_iw<w;\\
\langle i,w\lambda_1\rangle, &\textrm{if}\ s_iw>w,
\end{cases}\ &\langle i,\tilde{\lambda}_2\rangle=\begin{cases}
\langle i,-w\lambda_1\rangle, &\textrm{if}\ s_iw<w;\\
0, &\textrm{if}\ s_iw>w,\end{cases}\ \textrm{for}\ i\in I;\\
\langle i',\tilde{\lambda}_1\rangle=0,\ &\langle i',\tilde{\lambda}_2\rangle=\langle i,\lambda_2\rangle\ \textrm{for}\ i'\in I'.
\end{align*}
A direct verification using \eqref{definition of odot} shows that $-w\lambda_1\odot\lambda_2=\tilde{\lambda}_2-\tilde{\lambda}_1$. 

Consider the following $\tilde{\mathbf{U}}$-module homomorphisms
$$\xymatrix@C=2.5cm{\dot{\tilde{\mathbf{U}}}1_{-w\lambda_1\odot\lambda_2} \ar[d]_-{\dot{u}\mapsto \dot{u}v_{-w\lambda_1\odot\lambda_2}} \ar@{->>}[r]^-{\dot{u}\mapsto \dot{u}(\xi_{-\tilde{\lambda}_1}\otimes v_{\tilde{\lambda}_2})} &{^{\omega}\Lambda_{\tilde{\lambda}_1}}\otimes M_{\tilde{\lambda}_2} \ar@{->>}[r]^-{\mathrm{Id}\otimes \pi_{\tilde{\lambda}_2}} \ar@{-->}[ld]^f &{^{\omega}\Lambda_{\tilde{\lambda}_1}}\otimes \Lambda_{\tilde{\lambda}_2}\\
M_{-w\lambda_1\odot\lambda_2}.}$$
By \cite[\S 23.3.1 \& \S 23.3.5]{Lusztig-1993}, $\dot{\tilde{\mathbf{U}}}1_{-w\lambda_1\odot\lambda_2}\rightarrow {^{\omega}\Lambda_{\tilde{\lambda}_1}}\otimes M_{\tilde{\lambda}_2}, \dot{u}\mapsto \dot{u}(\xi_{-\tilde{\lambda}_1}\otimes v_{\tilde \l_2})$ is surjective with kernel $\sum_{\tilde{i}\in \tilde{I},n>\langle \tilde{i}, \tilde{\lambda}_1\rangle}\dot{\tilde{\mathbf{U}}}E_{\tilde{i}}^{(n)}1_{-w\lambda_1\odot\lambda_2}$. It is clear that the kernel acts on $v_{-w\lambda_1\odot\lambda_2}$ by $0$, and so $\dot{\tilde{\mathbf{U}}}1_{-w\lambda_1\odot\lambda_2}\rightarrow M_{-w\lambda_1\odot\lambda_2},\dot{u}\mapsto \dot{u} v_{-w\lambda_1\odot\lambda_2}$ factor though a $\tilde{\mathbf{U}}$-module homomorphism $f:{^{\omega}\Lambda_{\tilde{\lambda}_1}}\otimes M_{\tilde{\lambda}_2}\rightarrow M_{-w\lambda_1\odot\lambda_2}$ such that $f(\xi_{-\tilde{\lambda}_1}\otimes v_{\tilde{\lambda}_2})=v_{-w\lambda_1\odot\lambda_2}$. In particular, we have 
\begin{equation}\label{factor through f}
(b_1\diamondsuit_{-w\lambda_1\odot\lambda_2}b_2)v_{-w\lambda_1\odot\lambda_2}=f((b_1\diamondsuit_{-w\lambda_1\odot\lambda_2}b_2)(\xi_{-\tilde{\lambda}_1}\otimes v_{\tilde{\lambda}_2})).    
\end{equation}

Since $b_1\in \mathbf{B}_J$ and $b_2\in \mathbf{B}(\mathbf{f}\theta_{\lambda_2}\mathbf{f})$, by the construction of $b_1\diamondsuit_{-w\lambda_1\odot\lambda_2}b_2\in \dot{\tilde{\mathbf{B}}}$ in Theorem \ref{25.2.1} and the fact that $(\mathbf{f}\theta_{\lambda_2}\mathbf{f})_{\tilde{\lambda}}$ is a $\mathbf{U}$-submodule of $M_{\tilde{\lambda}}$ for any $\tilde{\lambda}\in \tilde{X}^+$, we know that $b_1\diamondsuit_{-w\lambda_1\odot\lambda_2}b_2$ is in the $\mathcal{A}$-submodule of $\dot{\tilde{\mathbf{U}}}$ generated by ${b'_1}^+{b'_2}^-1_{-w\lambda_1\odot\lambda_2}$ for any $b'_1\in \mathbf{B}_J$ and $b'_2\in \mathbf{B}(\mathbf{f}\theta_{\lambda_2}\mathbf{f})$. We can write
$$(b_1\diamondsuit_{-w\lambda_1\odot\lambda_2}b_2)(\xi_{-\tilde{\lambda}_1}\otimes v_{\tilde \l_2})=\sum_{b'_1 \in \mathbf{B}_J, b'_2 \in \mathbf{B}(\mathbf{f}\theta_{\lambda_2}\mathbf{f})} c_{b'_1,b'_2} {b'_1}^+\xi_{-\tilde{\lambda}_1}\otimes {b'_2}^- v_{\tilde \l_2}.$$ 
Since $\langle j,\tilde{\lambda}_1\rangle=0$ for any $j\in J$, we have ${b'_1}^+\xi_{-\tilde{\lambda}_1}=0$ for any $b'_1 \in \mathbf{B}_J$ with $b'_1 \neq 1$. Hence the linear combination can be rewritten as 
\begin{equation}\label{linear combination}
(b_1\diamondsuit_{-w\lambda_1\odot\lambda_2}b_2)(\xi_{-\tilde{\lambda}_1}\otimes v_{\tilde \l_2})=\sum_{b \in \mathbf{B}(\mathbf{f}\theta_{\lambda_2}\mathbf{f})}c_{b}\,\xi_{-\tilde{\lambda}_1}\otimes b^- v_{\tilde \l_2}.
\end{equation}
Similar, since $b_1\in \mathbf{B}_J$ and $b_1\neq 1$, we have $b_1^+\xi_{-\tilde{\lambda}_1}=0$. By Theorem \ref{25.2.1}, applying $\mathrm{Id}\otimes \pi_{\tilde{\lambda}_2}:{^{\omega}\Lambda_{\tilde{\lambda}_1}}\otimes M_{\tilde{\lambda}_2}\rightarrow {^{\omega}\Lambda_{\tilde{\lambda}_1}}\otimes \Lambda_{\tilde{\lambda}_2}$ to \eqref{linear combination}, we obtain
$$0=(b_1\diamondsuit_{-w\lambda_1\odot\lambda_2}b_2)(\xi_{-\tilde{\lambda}_1}\otimes \eta_{\tilde{\lambda}_2})=\sum_{b\in \mathbf{B}(\mathbf{f}\theta_{\lambda_2}\mathbf{f})}c_b\,\xi_{-\tilde{\lambda}_1}\otimes b^-\eta_{\tilde{\lambda}_2}.$$
By \cite[Theorem 14.4.11]{Lusztig-1993}, \{$b^-\eta_{\tilde{\lambda}_2};b\in \mathbf{B}(\mathbf{f}\theta_{\lambda_2}\mathbf{f}),b^-\eta_{\tilde{\lambda}_2}\neq 0\}\subset \mathbf{B}(\Lambda_{\tilde{\lambda}_2})$ is linearly independent, and so $c_b=0$ for any $b\in \mathbf{B}(\mathbf{f}\theta_{\lambda_2}\mathbf{f})$ with $b^- \eta_{\tilde \l_2} \neq 0$. By \eqref{factor through f}, we have
\begin{align*}
&(b_1\diamondsuit_{-w\lambda_1\odot\lambda_2}b_2) v_{-w\lambda_1\odot\lambda_2} 
=\sum_{b\in \mathbf{B}(\mathbf{f}\theta_{\lambda_2}\mathbf{f});b^-\eta_{\tilde{\lambda}_2}=0} f(c_b\,\xi_{-\tilde{\lambda}_1}\otimes b^- v_{\tilde \l_2})\\
=&\sum_{b\in \mathbf{B}(\mathbf{f}\theta_{\lambda_2}\mathbf{f});b^-\eta_{\tilde{\lambda}_2}=0} c_b\,b^- f(\xi_{-\tilde{\lambda}_1}\otimes v_{\tilde \l_2})
=\sum_{b\in \mathbf{B}(\mathbf{f}\theta_{\lambda_2}\mathbf{f});b^-\eta_{\tilde{\lambda}_2}=0} c_b\,b^- v_{-w\lambda_1\odot\lambda_2}
\end{align*}
It remains to prove that it belongs to $\mathrm{ker}\,(a\otimes \mathrm{Id})\varphi_{-w\lambda_1,\lambda_2}$.

Let $b \in \mathbf{B}(\mathbf{f}\theta_{\lambda_2}\mathbf{f})$ with $b^-\eta_{\tilde{\lambda}_2}=0$. By \eqref{14.4.11}, $b\in \bigcup_{\tilde{i}\in \tilde{I}}(\tilde{\mathbf{B}}\cap \tilde{\mathbf{f}}\theta_{\tilde{i}}^{\langle \tilde{i},\tilde{\lambda}_2\rangle+1})$. Note that $|b|\in \sum_{i\in I}\langle i,\lambda_2\rangle i'+\mathbb{N}[I]$. Hence $b\not\in \bigcup_{i'\in I'}\tilde{\mathbf{f}}\theta_{i'}^{\langle i',\tilde{\lambda}_2\rangle+1}=\bigcup_{i'\in I'}\tilde{\mathbf{f}}\theta_{i'}^{\langle i,\lambda_2\rangle+1}$ and $b\in \tilde{\mathbf{f}}\theta_i^{\langle i,\tilde{\lambda}_2\rangle+1}$ for some $i\in I$. We write $b=x\theta_i^{\langle i,\tilde{\lambda}_2\rangle+1}$ with $x\in \tilde{\mathbf{f}}$, and write $r(x)=\sum x_1\otimes x_2,r(\theta_i^{\langle i,\tilde{\lambda}_2\rangle+1})=y_1\otimes y_2$ with homogeneous $x_1,x_2\in \tilde{\mathbf{f}},y_1,y_2\in \mathbf{f}$. By the definition of $\varphi_{-w\lambda_1,\lambda_2}$, we have 
$$(a\otimes \mathrm{Id})\varphi_{-w\lambda_1,\lambda_2}(b^-v_{-w\lambda_1\odot\lambda_2})
=\sum v^{|x_2|\cdot |y_1|}x_2^-y_2^-\xi_{-w\lambda_1}\otimes  \phi_{\lambda_2}^{-1}(x_1^-y_1^-\eta_{0\odot\lambda_2}).$$
Since $\langle k,0\odot\lambda_2\rangle=0$ for any $k\in I$, we have $y_1^-\eta_{0\odot\lambda_2}=0$ whenever $|y_1|\neq 0$. Notice that $x_2^-F_i^{\langle i,\tilde{\lambda}_2\rangle+1}\xi_{-w\lambda_1}=0$, and so $(a\otimes \mathrm{Id})\varphi_{-w\lambda_1,\lambda_2}(b)=0$. 
\end{proof}

\subsection{Structure constant}

Let $\zeta\in X$ and $b_1,b_2\in \mathbf{B}$. Then we have $$b_1\diamondsuit_{\zeta}b_2\in 1_{\zeta+|b_1|-|b_2|}\dot{\mathbf{U}}1_{\zeta}$$ 
(see \cite[Remark 25.2.4]{Lusztig-1993}). We denote $|b_1\diamondsuit_{\zeta} b_2|=|b_1|-|b_2|\in \mathbb{Z}[I]$.

Let $\sigma:\mathbf{f}\rightarrow \mathbf{f}^{\mathrm{opp}}$ be the algebra isomorphism defined by $\sigma(\theta_i)=\theta_i$ for any $i\in I$. By \cite[Theorem 14.4.3]{Lusztig-1993}, $\sigma(\mathbf{B})=\mathbf{B}$. Similarly, we have $\sigma:\tilde{\mathbf{f}}\rightarrow \tilde{\mathbf{f}}^{\mathrm{opp}}$ such that $\sigma(\tilde{\mathbf{B}})=\tilde{\mathbf{B}}$. Let $\sigma:\mathbf{U}\rightarrow \mathbf{U}^{\mathrm{opp}}$ be the algebra isomorphism defined by $\sigma(E_i)=E_i,\sigma(F_i)=F_i,\sigma(K_{\mu})=K_{-\mu}$ for any $i\in I$ and $\mu\in Y$. Then $\sigma(x^+)=(\sigma(x))^+$ and $\sigma(x^-)=(\sigma(x))^-$ for any $x\in \mathbf{f}$. It induces an algebra isomorphism $\dot{\mathbf{U}}\rightarrow \dot{\mathbf{U}}^{\mathrm{opp}}$, still denoted by $\sigma$ (see \cite[\S 23.1.6]{Lusztig-1993}). By \cite[Theorem 4.3.2]{Kashiwara-1994}, $\dot{\sigma}(\dot{\mathbf{B}})=\dot{\mathbf{B}}$.

Let $(\tilde{\tilde{I}},\cdot)$ be the thickening Cartan datum of $(\tilde{I},\cdot)$, and $\tilde{\tilde{\mathbf{f}}}$ be the corresponding algebra defined in \S \ref{The algebra f} with the canonical basis $\tilde{\tilde{\mathbf{B}}}$. For any $b_1,b_2\in \tilde{\tilde{\mathbf{B}}}$, we write $b_1b_2=\sum_{b\in \tilde{\tilde{\mathbf{B}}}}m_{b_1,b_2}^b b$. 

\begin{theorem}\label{thm:structure-multiplication}
Let $J \subset I$ be a spherical subset. Then for any $\dot{b}\in \dot{\mathbf{B}}\cap \dot{\mathbf{U}}_J$ and $\dot{b}'\in \dot{\mathbf{B}}$, we have
$$\dot{b}\dot{b}'=\sum_{\dot{b}''\in \dot{\mathbf{B}}}m_{\sigma(\mathrm{th}_{\lambda_1,\lambda_2,w}(\dot{b}')),\widetilde{\mathrm{th}}_{\tilde{\lambda}_1,\tilde{\lambda}_2,\tilde{w}}(\sigma(\dot{\tilde{b}}))}^{\widetilde{\mathrm{th}}_{\tilde{\lambda}_1,\tilde{\lambda}_2,\tilde{w}}(\sigma(1\diamondsuit_{-w\lambda_1\odot\lambda_2}\,\mathrm{th}_{\lambda_1,\lambda_2,w}(\dot{b}'')))}\dot{b}'',$$
for sufficiently regular $\l_1, \l_2 \in X^+$, $\tilde{\lambda}_1,\tilde{\lambda}_2\in \tilde{X}^+$ and sufficiently large $w\in W,\tilde{w}\in \tilde{W}$ with $\dot{b}' \in \dot{\mathbf{U}}1_{\lambda_2-\lambda_1}$, $J=J(w)$ and $\dot{\tilde{b}}=b_1\diamondsuit_{-w\lambda_1\odot\lambda_2-|\mathrm{th}_{\lambda_1,\lambda_2,w}(\dot{b}')|}b_2\in 1_{\tilde{\lambda}_1-\tilde{\lambda}_2}\dot{\tilde{\mathbf{U}}}$, where $b_1\in \mathbf{B}_J$ and $b_2\in \mathbf{B}$ are determined by $\dot{b}=b_1\diamondsuit_{\lambda_2-\lambda_1+|\dot{b}'|}b_2$.
\end{theorem} 
\begin{proof}
We simply denote $\mathrm{th}_{\lambda_1,\lambda_2,w}=\mathrm{th},\varphi_{-w\lambda_1,\lambda_2}=\varphi$ and $\widetilde{\mathrm{th}}_{\tilde{\lambda}_1,\tilde{\lambda}_2,\tilde{w}}=\widetilde{\mathrm{th}}$.

By Theorem \ref{25.2.1}, we have $\dot{b}' \in \dot{\mathbf{U}}1_{\zeta}$ for some $\zeta\in X$. We can write $\dot{b}\dot{b}'=\sum_{\dot{b}''\in \dot{\mathbf{B}}\cap \dot{\mathbf{U}}1_{\zeta}}\dot{m}_{\dot{b},\dot{b}'}^{\dot{b}''}\dot{b}''$, and choose sufficiently regular $\l_1, \l_2 \in X^+$ with $\zeta=\l_2-\l_1$ such that $\dot{b}'(\xi_{-\lambda_1}\otimes \eta_{\lambda_2})\neq 0$ and $\dot{b}''(\xi_{-\lambda_1}\otimes \eta_{\lambda_2})\neq 0$ for any $\dot{b}'' \in \dot{\mathbf{B}}$ with $\dot{m}_{\dot{b},\dot{b}'}^{\dot{b}''}\neq 0$. Then 
\begin{equation}\label{structure constant m dot}
\dot{b}\dot{b}'(\xi_{-\lambda_1}\otimes \eta_{\lambda_2})=\sum_{\dot{b}''\in \dot{\mathbf{B}}\cap \dot{\mathbf{U}}1_{\lambda_2-\lambda_1}}\dot{m}_{\dot{b},\dot{b}'}^{\dot{b}''}\dot{b}''(\xi_{-\lambda_1}\otimes \eta_{\lambda_2}).    
\end{equation} 
By Theorem \ref{25.2.1}, $\{\dot{b}''(\xi_{-\lambda_1}\otimes \eta_{\lambda_2});\dot{b}'' \in \dot{\mathbf{B}},\dot{m}_{\dot{b},\dot{b}'}^{\dot{b}''}\neq 0\}\subset \mathbf{B}(^{\omega} \L_{\l_1} \otimes \L_{\l_2})$ is linearly independent. Let $w \in W$ with $J=J(w)$ and $\dot{b}'(\xi_{-\lambda_1}\otimes \eta_{\lambda_2})\in \mathbf{B}({^{\omega}V_w(\lambda_1)}\otimes \Lambda_{\lambda_2})$. Then $\dot{b}\dot{b}'(\xi_{-\lambda_1}\otimes \eta_{\lambda_2})\in {^{\omega}V_w(\lambda_1)}\otimes \Lambda_{\lambda_2}$. By Proposition \ref{canonical basis of demazure otimes highest weight}, we have $\dot{b}''(\xi_{-\lambda_1}\otimes \eta_{\lambda_2})\in \mathbf{B}({^{\omega}V_w(\lambda_1)}\otimes \Lambda_{\lambda_2})$ for any $\dot{b}'' \in \dot{\mathbf{B}}$ with $\dot{m}_{\dot{b},\dot{b}'}^{\dot{b}''}\neq 0$. 

Notice that $(\mathrm{th}(\dot{b}'))^-v_{-w\lambda_1\odot\lambda_2}\in \mathbf{B}((\mathbf{f}\theta_{\lambda_2}\mathbf{f})_{-w\lambda_1\odot\lambda_2})$ and $(\mathbf{f}\theta_{\lambda_2}\mathbf{f})_{-w\lambda_1\odot\lambda_2}$ is a $\mathbf{U}$-submodule of $M_{-w\lambda_1\odot\lambda_2}$, we can write 
\begin{equation}\label{structure constant a}
\dot{b}(\mathrm{th}(\dot{b}'))^-v_{-w\lambda_1\odot\lambda_2}=\sum_{\tilde{b}''\in \mathbf{B}(\mathbf{f}\theta_{\lambda_2}\mathbf{f})} a_{\dot{b},\mathrm{th}(\dot{b}')}^{\tilde{b}''}\tilde{b}^{''-}v_{-w\lambda_1\odot\lambda_2}.  
\end{equation}
By \eqref{definition of thickening map}, Lemma \ref{lem:Demazure-UJ} and Lemma \ref{isomorphism varphi}, we have 
\begin{equation}\label{structure constant a'}
\begin{aligned}
\dot{b}\dot{b}'(\xi_{-\lambda_1}\otimes \eta_{\lambda_2})=&\dot{b}(a\otimes \mathrm{Id})\varphi((\mathrm{th}(\dot{b}'))^-v_{-w\lambda_1\odot\lambda_2})\\
=&(a\otimes \mathrm{Id})\varphi(\dot{b}(\mathrm{th}(\dot{b}'))^-v_{-w\lambda_1\odot\lambda_2})\\
=&\sum_{\tilde{b}''\in \mathbf{B}(\mathbf{f}\theta_{\lambda_2}\mathbf{f})}a_{\dot{b},\mathrm{th}(\dot{b}')}^{\tilde{b}''}(a\otimes \mathrm{Id})\varphi(\tilde{b}^{''-}v_{-w\lambda_1\odot\lambda_2}).   
\end{aligned}
\end{equation}
Comparing \eqref{structure constant m dot} with \eqref{structure constant a'}, we obtain the relation between the structure constants of the multiplication in $\dot{\mathbf{U}}$ and the action of $\dot{\mathbf{U}}$ on $(\mathbf{f}\theta_{\lambda_2}\mathbf{f})_{-w\lambda_1\odot\lambda_2}$. More precisely,

(a) $\dot{m}_{\dot{b},\dot{b}'}^{\dot{b}''}=a_{\dot{b},\mathrm{th}(\dot{b}')}^{\mathrm{th}(\dot{b}'')}$ for any $\dot{b}'' \in \dot{\mathbf{B}}$ with $\dot{m}_{\dot{b},\dot{b}'}^{\dot{b}''}\not=0$.

In order to determine the structure constant $a_{\dot{b},\mathrm{th}(\dot{b}')}^{\mathrm{th}(\dot{b}'')}$, we first consider the special case that $\dot{b}=1\diamondsuit_{\zeta+|\dot{b}'|}b$ with $b\in \mathbf{B}$. In this case, it is clear that $b\,\mathrm{th}(\dot{b}')\in \mathbf{f}\theta_{\lambda_2}\mathbf{f}$. We have $b\,\mathrm{th}(\dot{b}')=\sum_{\tilde{b}''\in \mathbf{B}(\mathbf{f}\theta_{\lambda_2}\mathbf{f})}m_{b,\mathrm{th}(\dot{b}')}^{\tilde{b}''}\tilde{b}''$. Note that
\begin{equation}\label{weight relation}
\langle i,\zeta+|\dot{b}'|\rangle=\langle i,-w\lambda_1\odot\lambda_2-|\mathrm{th}(\dot{b}')|\rangle\ \textrm{for any}\ i\in I.   
\end{equation}
Hence 
\begin{align*}
(1\diamondsuit_{\zeta+|\dot{b}'|}b)(\mathrm{th}(\dot{b}'))^-v_{-w\lambda_1\odot\lambda_2}=&b^-(\mathrm{th}(\dot{b}'))^-v_{-w\lambda_1\odot\lambda_2}
=(b\,\mathrm{th}(\dot{b}'))^-v_{-w\lambda_1\odot\lambda_2}\\
=&\sum_{\tilde{b}''\in \mathbf{B}(\mathbf{f}\theta_{\lambda_2}\mathbf{f})}m_{b,\mathrm{th}(\dot{b}')}^{\tilde{b''}}\tilde{b}^{''-}v_{-w\lambda_1\odot\lambda_2}.
\end{align*}
Comparing it with \eqref{structure constant a}, we obtain $a_{1\diamondsuit_{\zeta+|\dot{b}'|}b,\mathrm{th}(\dot{b}')}^{\tilde{b}''}=m_{b,\mathrm{th}(\dot{b}')}^{\tilde{b''}}$ for any $\tilde{b}''\in \mathbf{B}(\mathbf{f}\theta_{\lambda_2}\mathbf{f})$. This relates the structure constants of the action of $1\diamondsuit_{\zeta+|\dot{b}'|}b$ on $(\mathbf{f}\theta_{\lambda_2}\mathbf{f})_{-w\lambda_1\odot\lambda_2}$ with the multiplication in $\tilde{\mathbf{f}}$. By (a), $\dot{m}_{1\diamondsuit_{\zeta+|\dot{b}'|}b,\dot{b}'}^{\dot{b}''}=m_{b,\mathrm{th}(\dot{b}')}^{\mathrm{th}(\dot{b}'')}$ for any $\dot{b}'' \in \dot{\mathbf{B}}$ with $\dot{m}_{\dot{b}',\dot{b}}^{\dot{b}''}\not=0$, and so $(1\diamondsuit_{\zeta+|\dot{b}'|}b)\dot{b}'=\sum_{\dot{b}''\in \dot{\mathbf{B}}\cap \dot{\mathbf{U}}1_{\lambda_2-\lambda_1}}m_{b,\mathrm{th}(\dot{b}')}^{\mathrm{th}(\dot{b}'')}\dot{b}''$. Applying the algebra isomorphism $\sigma:\dot{\mathbf{U}}\rightarrow \dot{\mathbf{U}}^{\mathrm{opp}}$, we obtain 

(b) for any $\dot{b}\in \dot{\mathbf{B}}\cap 1_{-\zeta}\dot{\mathbf{U}},b\in \mathbf{B}$, sufficiently regular $\lambda_1,\lambda_2\in X^+$ and sufficiently large $w\in W$ with $\zeta=\lambda_2-\lambda_1$ and $\sigma(\dot{b})(\xi_{-\lambda_1}\otimes \eta_{\lambda_2})\in \mathbf{B}({^{\omega}V_w(\lambda_1)\otimes \Lambda_{\lambda_2}})$, we have
$$\dot{b}(1\diamondsuit_{-\zeta-|\dot{b}|+|b|}b)=\sum_{\dot{b}''\in \dot{\mathbf{B}}\cap 1_{\lambda_1-\lambda_2}\dot{\mathbf{U}}}m_{\sigma(b),\mathrm{th}(\sigma(\dot{b}))}^{\mathrm{th}(\sigma(\dot{b}''))}\dot{b}''.$$ 

Now we consider the general case. Applying (b) in the larger modified quantum group $\dot{\tilde{\mathbf{U}}}$ for $\dot{\tilde{b}}\in \dot{\tilde{\mathbf{B}}}\cap 1_{-w\lambda_1\odot\lambda_2-|\mathrm{th}(\dot{b}')|+|\dot{b}|}\dot{\tilde{\mathbf{U}}},\mathrm{th}(\dot{b}')\in \tilde{\mathbf{B}}$, sufficiently regular $\tilde{\lambda}_1,\tilde{\lambda}_2\in X^+$ and sufficiently large $\tilde{w}\in \tilde{W}$ with $-w\lambda_1\odot\lambda_2-|\mathrm{th}(\dot{b}')|+|\dot{b}|=\tilde{\lambda}_1-\tilde{\lambda}_2$ and $\sigma(\dot{\tilde{b}})(\xi_{-\tilde{\lambda}_1}\otimes \eta_{\tilde{\lambda}_2})\in \mathbf{B}({^{\omega}V_{\tilde{w}}(\tilde{\lambda}_1)\otimes \Lambda_{\tilde{\lambda}_2}})$, we have  
$$\dot{\tilde{b}}(1\diamondsuit_{-w\lambda_1\odot\lambda_2}\,\mathrm{th}(\dot{b}'))=\sum_{\dot{\tilde{b}}''\in \dot{\tilde{\mathbf{B}}}\cap 1_{\tilde{\lambda}_1-\tilde{\lambda}_2}\dot{\tilde{\mathbf{U}}}} m_{\sigma(\mathrm{th}(\dot{b}')),\widetilde{\mathrm{th}}(\sigma(\dot{\tilde{b}}))}^{\widetilde{\mathrm{th}}(\sigma(\dot{\tilde{b}}''))}\dot{\tilde{b}}''.$$ 
By the constructions of $\dot{b}=b_1\diamondsuit_{\zeta+|\dot{b}'|}b_2\in \dot{\mathbf{B}},\dot{\tilde{b}}=b_1\diamondsuit_{-w\lambda_1\odot\lambda_2-|\mathrm{th}(\dot{b}')|}b_2\in \dot{\tilde{\mathbf{B}}}$ in Theorem \ref{25.2.1} and \eqref{weight relation}, we have  
\begin{equation}\label{the product acts on Verma}
\begin{aligned}
\dot{b}(\mathrm{th}(\dot{b}'))^-v_{-w\lambda_1\odot\lambda_2}=&\dot{\tilde{b}}(\mathrm{th}(\dot{b}'))^-v_{-w\lambda_1\odot\lambda_2}=\dot{\tilde{b}}(1\diamondsuit_{-w\lambda_1\odot\lambda_2}\,\mathrm{th}(\dot{b}'))v_{-w\lambda_1\odot\lambda_2}\\
=&\sum_{\dot{\tilde{b}}''\in \dot{\tilde{\mathbf{B}}}\cap 1_{\tilde{\lambda}_1-\tilde{\lambda}_2}\dot{\tilde{\mathbf{U}}}} m_{\sigma(\mathrm{th}(\dot{b}')),\widetilde{\mathrm{th}}(\sigma(\dot{\tilde{b}}))}^{\widetilde{\mathrm{th}}(\sigma(\dot{\tilde{b}}''))}\dot{\tilde{b}}''v_{-w\lambda_1\odot\lambda_2}. 
\end{aligned}
\end{equation}
Since $\dot{b}\in \dot{\mathbf{B}}\cap\dot{\mathbf{U}}_J$ and $\mathrm{th}(\dot{b}')\in \mathbf{B}(\mathbf{f}\theta_{\lambda_2}\mathbf{f})$, we have $b_1\in \mathbf{B}_J$, and the product
$$\dot{\tilde{b}}(1\diamondsuit_{-w\lambda_1\odot\lambda_2}\,\mathrm{th}(\dot{b}'))=\dot{\tilde{b}}(\mathrm{th}(\dot{b}'))^-1_{-w\lambda_1\odot\lambda_2}$$
belongs to the $\mathcal{A}$-submodule of $\dot{\tilde{\mathbf{U}}}$ generated by $\tilde{b}_1^+\tilde{b}_2^-1_{-w\lambda_1\odot\lambda_2}$ for any $\tilde{b}_1\in \mathbf{B}_J$ and $\tilde{b}_2\in \mathbf{B}(\mathbf{f}\theta_{\lambda_2}\mathbf{f})$. By the proof of Theorem \ref{25.2.1}, for any $\dot{\tilde{b}}''\in \dot{\tilde{\mathbf{B}}}$ with $m_{\sigma(\mathrm{th}(\dot{b}')),\widetilde{\mathrm{th}}(\sigma(\dot{\tilde{b}}))}^{\widetilde{\mathrm{th}}(\sigma(\dot{\tilde{b}}''))}\not=0$, we have $\dot{\tilde{b}}''=\tilde{b}''_1\diamondsuit_{-w\lambda_1\odot\lambda_2}\tilde{b}''_2$ for some $\tilde{b}''_1\in \mathbf{B}_J,\tilde{b}''_2\in \mathbf{B}(\mathbf{f}\theta_{\lambda_2}\mathbf{f})$. We write \eqref{the product acts on Verma} into the linear combination of $\mathbf{B}((\mathbf{f}\theta_{\lambda_2}\mathbf{f})_{-w\lambda_1\odot\lambda_2})$, then compare the coefficients of $(\mathrm{th}(\dot{b}''))^-v_{-w\lambda_1\odot\lambda_2}$ in \eqref{structure constant a} and \eqref{the product acts on Verma} for any $\dot{b}'' \in \dot{\mathbf{B}}$ with $\dot{m}_{\dot{b},\dot{b}'}^{\dot{b}''}\not=0$. Note that $(\mathrm{th}(\dot{b}''))^-v_{-w\lambda_1\odot\lambda_2}\notin \mathrm{ker}\,(a\otimes \mathrm{Id})\varphi_{-w\lambda_1,\lambda_2}$. 
By Lemma \ref{a otimes Id is a based map} and Lemma \ref{canonical basis of U dot acts on Verma}, the coefficient of $(\mathrm{th}(\dot{b}''))^-v_{-w\lambda_1\odot\lambda_2}$ in \eqref{the product acts on Verma} is $m_{\sigma(\mathrm{th}(\dot{b}')),\widetilde{\mathrm{th}}(\sigma(\dot{\tilde{b}}))}^{\widetilde{\mathrm{th}}(\sigma(1\diamondsuit_{-w\lambda_1\odot\lambda_2}\,\mathrm{th}(\dot{b}'')))}$, and so $\dot{m}_{\dot{b},\dot{b}'}^{\dot{b}''}=a_{\dot{b},\mathrm{th}(\dot{b}')}^{\mathrm{th}(\dot{b}'')}=m_{\sigma(\mathrm{th}(\dot{b}')),\widetilde{\mathrm{th}}(\sigma(\dot{\tilde{b}}))}^{\widetilde{\mathrm{th}}(\sigma(1\diamondsuit_{-w\lambda_1\odot\lambda_2}\,\mathrm{th}(\dot{b}'')))}$. 
\end{proof}

\section{Positivity of canonical basis of modified quantum group}\label{Positivity of canonical basis of modified quantum group}

\subsection{The involution $\omega$ and canonical basis}
Recall the algebra automorphism $\omega:\mathbf{U}\rightarrow \mathbf{U}$ defined by $\o(E_i)=F_i$, $\o(F_i)=E_i,\o(K_\mu)=K_{-\mu}$ for any $i \in I$ and $\mu \in Y$. It induces an automorphism of $\dot{\mathbf{U}}$ (see \cite[\S 23.1.6]{Lusztig-1993}), also denoted by $\o$. By \cite[Corollary 26.3.2]{Lusztig-1993}, $\omega(\dot{\mathbf{B}})\subset \pm\dot{\mathbf{B}}$. Lusztig conjectured that $\omega(\dot{\mathbf{B}})=\dot{\mathbf{B}}$. When $(I,\cdot)$ is of finite type, it is proved by Lusztig in \cite[Proposition 3.16]{Lusztig-2023}. In this subsection, we verify this conjecture in general. 

Let $\lambda_1,\lambda_2\in X^+$. We define the linear map
$$\Omega_{-\lambda_1,\lambda_2}:{^{\omega}\Lambda_{\lambda_1}}\otimes \Lambda_{\lambda_2}\rightarrow {^{\omega}({^{\omega}\Lambda_{\lambda_2}}\otimes \Lambda_{\lambda_1})}$$
by $u_1\xi_{-\lambda_1}\otimes u_2\eta_{\lambda_2} \mapsto {}^{\omega}(\omega(u_2)\xi_{-\lambda_2}\otimes \omega(u_1)\eta_{\lambda_1})$ for any $u_1,u_2\in \mathbf{U}$. Similarly, we have the linear map $\Omega_{-\lambda_2, \lambda_1}:{^{\omega}\Lambda_{\lambda_2}}\otimes \Lambda_{\lambda_1}\rightarrow {^{\omega}({^{\omega}\Lambda_{\lambda_1}}\otimes \Lambda_{\lambda_2})}$, and it induces the linear map ${^{\omega}\Omega_{-\lambda_2,\lambda_1}}:{^{\omega}({^{\omega}\Lambda_{\lambda_2}}\otimes \Lambda_{\lambda_1})}\rightarrow {^{\omega}\Lambda_{\lambda_1}}\otimes \Lambda_{\lambda_2}$ by ${^{\omega}\Omega_{-\lambda_2,\lambda_1}}({^{\omega}m})={^{\omega}(\Omega_{-\lambda_2,\lambda_1}(m))}$ for any $m\in {^{\omega}\Lambda_{\lambda_2}}\otimes \Lambda_{\lambda_1}$. Then it is easy to check that ${^{\omega}\Omega_{-\lambda_2,\lambda_1}}$ is inverse to $\Omega_{-\lambda_1,\lambda_2}$. Hence $\Omega_{-\lambda_1,\lambda_2}$ is a linear isomorphism.

\begin{lemma}\label{isomorphism Omega}
Let $\lambda_1,\lambda_2\in X^+$. Then $\Omega_{-\lambda_1,\lambda_2}:{^{\omega}\Lambda_{\lambda_1}}\otimes \Lambda_{\lambda_2}\rightarrow {^{\omega}({^{\omega}\Lambda_{\lambda_2}}\otimes \Lambda_{\lambda_1})}$ is a $\mathbf{U}$-module isomorphism such that
$$\Omega_{-\lambda_1,\lambda_2}(b_1^+\xi_{-\lambda_1}\diamondsuit b_2^-\eta_{\lambda_2})={}^{\omega}(b_2^+\xi_{-\lambda_2}\diamondsuit b_1^-\eta_{\lambda_1})\ \textrm{for any}\ b_1\in \mathbf{B}(\lambda_1),b_2\in \mathbf{B}(\lambda_2).$$
\end{lemma}
\begin{proof}
We simply denote $\Omega_{-\lambda_1,\lambda_2}=\Omega$. Consider the diagram
$$\xymatrix{\mathbf{U} \ar[r]^-{\Delta} \ar[d]_-{\omega} &\mathbf{U}\otimes \mathbf{U} \ar[d]^-{\omega\otimes \omega} \\
\mathbf{U} \ar[r]^-{\tau\Delta}  &\mathbf{U}\otimes \mathbf{U},}$$
where $\tau(u_1\otimes u_2)=u_2\otimes u_1$. By the proof of \cite[Proposition 25.1.3]{Lusztig-1993}, the diagram commutes, that is, for any $u\in \mathbf{U}$ with $\Delta(u)=\sum u_1\otimes u_2\in \mathbf{U}\otimes \mathbf{U}$, we have $\Delta\omega(u)=\sum \omega(u_2)\otimes \omega(u_1)$. Then for any $u'_1,u'_2\in \mathbf{U}$, we have 
\begin{align*}
\Omega(u(u'_1\xi_{-\lambda_1}\otimes u'_2\eta_{\lambda_2}))=&\Omega(\sum u_1u'_1\xi_{-\lambda_1}\otimes u_2u'_2\eta_{\lambda_2})\\
=&\sum {}^{\omega}(\omega(u_2)\omega(u'_2)\xi_{-\lambda_2}\otimes \omega(u_1)\omega(u'_1)\eta_{\lambda_1}),\\
u\Omega(u'_1\xi_{-\lambda_1}\otimes u'_2\eta_{\lambda_2})=&u({^{\omega}(\omega(u'_2)\xi_{-\lambda_2}\otimes \omega(u'_1)\eta_{\lambda_1})})\\
=&{^{\omega}((\omega(u))(\omega(u'_2)\xi_{-\lambda_2}\otimes \omega(u'_1)\eta_{\lambda_1}))}\\=
&\sum {}^{\omega}(\omega(u_2)\omega(u'_2)\xi_{-\lambda_2}\otimes \omega(u_1)\omega(u'_1)\eta_{\lambda_1}).
\end{align*}
Hence $\Omega$ is a $\mathbf{U}$-module isomorphism.

Let $\Psi$ and $\Psi'$ denote the involutions on ${^{\omega}\Lambda_{\lambda_1}}\otimes \Lambda_{\lambda_2}$ and ${^{\omega}\Lambda_{\lambda_2}}\otimes \Lambda_{\lambda_1}$ defined by \eqref{Phi-involution} respectively. By definitions, for any $u_1,u_2\in \mathbf{U}$, we have
\begin{align*}
\Psi'({^{\omega}(\Omega(u_1\xi_{-\lambda_1}\otimes u_2\eta_{\lambda_2})}))=&\Psi'(\omega(u_2)\xi_{-\lambda_2}\otimes \omega(u_1)\eta_{\lambda_1})\\
=&\sum_{\nu\in \mathbb{N}[I]}(-v)^{\mathrm{tr}\,\nu}\sum_{b\in \mathbf{B}_{\nu}}b^-\overline{\omega(u_2)\xi_{-\lambda_2}}\otimes b^{*+}\overline{\omega(u_1)\eta_{\lambda_1}}\\
=&\sum_{\nu\in \mathbb{N}[I]}(-v)^{\mathrm{tr}\,\nu}\sum_{b\in \mathbf{B}_{\nu}}b^-\omega(\overline{u_2})\xi_{-\lambda_2}\otimes b^{*+}\omega(\overline{u_1})\eta_{\lambda_1},\\
{^{\omega}(\Omega\Psi(u_1\xi_{-\lambda_1}\otimes u_2\eta_{\lambda_2}))}=&{^{\omega}(\Omega(\sum_{\nu\in \mathbb{N}[I]}(-v)^{\mathrm{tr}\,\nu}\sum_{b\in \mathbf{B}_{\nu}}b^-\overline{u_1\xi_{-\lambda_1}}\otimes b^{*+}\overline{u_2\eta_{\lambda_2}}))}\\
=&\sum_{\nu\in \mathbb{N}[I]}(-v)^{\mathrm{tr}\,\nu}\sum_{b\in \mathbf{B}_{\nu}}b^{*-}\omega(\overline{u_2})\xi_{-\lambda_2}\otimes b^+\omega(\overline{u_1})\eta_{\lambda_1}.
\end{align*}
Notice that both $\{b; b \in \mathbf{B}_{\nu}\}$ and $\{b^*; b \in \mathbf{B}_{\nu}\}$ are bases of $\mathbf{f}_{\nu}$ which are dual to each other. So $\sum_{b\in \mathbf{B}_{\nu}}b^-\omega(\overline{u_2})\xi_{-\lambda_2}\otimes b^{*+}\omega(\overline{u_1})\eta_{\lambda_1}=\sum_{b\in \mathbf{B}_{\nu}}b^{*-}\omega(\overline{u_2})\xi_{-\lambda_2}\otimes b^+\omega(\overline{u_1})\eta_{\lambda_1}$, and $\Psi'({^{\omega}(\Omega(u_1\xi_{-\lambda_1}\otimes u_2\eta_{\lambda_2})}))={^{\omega}(\Omega\Psi(u_1\xi_{-\lambda_1}\otimes u_2\eta_{\lambda_2}))}$. 

Let $b_1\in \mathbf{B}(\lambda_1)$ and $b_2\in \mathbf{B}(\lambda_2)$. By Theorem \ref{24.3.3}, we have $\Psi(b_1^+\xi_{-\lambda_1}\diamondsuit b_2^-\eta_{\lambda_2})=b_1^+\xi_{-\lambda_1}\diamondsuit b_2^-\eta_{\lambda_2}\in \mathbb{Z}[v^{-1}][\mathbf{B}({^{\omega}\Lambda_{\lambda_1}})\otimes \mathbf{B}(\Lambda_{\lambda_2})]$ and $b_1^+\xi_{-\lambda_1}\diamondsuit b_2^-\eta_{\lambda_2}-b_1^+\xi_{-\lambda_1}\otimes b_2^-\eta_{\lambda_2}\in v^{-1}\mathbb{Z}[v^{-1}][\mathbf{B}({^{\omega}\Lambda_{\lambda_1}})\otimes \mathbf{B}(\Lambda_{\lambda_2})]$. Then 
\begin{align*}
&\Psi'({^{\omega}(\Omega(b_1^+\xi_{-\lambda_1}\diamondsuit b_2^-\eta_{\lambda_2})}))={^{\omega}(\Omega(b_1^+\xi_{-\lambda_1}\diamondsuit b_2^-\eta_{\lambda_2})})\in \mathbb{Z}[v^{-1}][\mathbf{B}({^{\omega}\Lambda_{\lambda_2}})\otimes \mathbf{B}(\Lambda_{\lambda_1})];\\
&{^{\omega}(\Omega(b_1^+\xi_{-\lambda_1}\diamondsuit b_2^-\eta_{\lambda_2}))}-b_2^+\xi_{-\lambda_2}\otimes b_1^-\eta_{\lambda_1}\in v^{-1}\mathbb{Z}[v^{-1}][\mathbf{B}({^{\omega}\Lambda_{\lambda_2}})\otimes \mathbf{B}(\Lambda_{\lambda_1})],
\end{align*}
Hence ${^{\omega}(\Omega(b_1^+\xi_{-\lambda_1}\diamondsuit b_2^-\eta_{\lambda_2}))}$ satisfies the characterizing conditions in Theorem \ref{24.3.3}, and so ${^{\omega}(\Omega(b_1^+\xi_{-\lambda_1}\diamondsuit b_2^-\eta_{\lambda_2}))}=b_2^+\xi_{-\lambda_2}\diamondsuit b_1^-\eta_{\lambda_1}$, that is, we have $\Omega(b_1^+\xi_{-\lambda_1}\diamondsuit b_2^-\eta_{\lambda_2})={}^{\omega}(b_2^+\xi_{-\lambda_2}\diamondsuit b_1^-\eta_{\lambda_1})$.
\end{proof}

Now we show that the involution $\omega$ preserves the canonical basis. 

\begin{proposition}\label{prop:omega}
Let $\zeta\in X$ and $b_1,b_2\in \mathbf{B}$. Then $\omega(b_1\diamondsuit_{\zeta} b_2)=b_2\diamondsuit_{-\zeta}\, b_1$.
\end{proposition}
\begin{proof}
By Theorem Theorem \ref{25.2.1}, we have $\omega(b_1\diamondsuit_{\zeta} b_2)\in \omega({_{\mathcal{A}}\dot{\mathbf{U}}1_{\zeta}})={_{\mathcal{A}}\dot{\mathbf{U}}1_{-\zeta}}$. For any $\lambda_1,\lambda_2\in X^+$ with $\zeta=\lambda_2-\lambda_1$ and $b_1\in \mathbf{B}(\lambda_1),b_2\in \mathbf{B}(\lambda_2)$, we have 
\begin{align*}
{}^{\omega}(\omega(b_1 \diamondsuit_{\zeta} b_2)(\xi_{-\lambda_2}\otimes \eta_{\lambda_1}))=&(b_1\diamondsuit_{\zeta} b_2)({^{\omega}(\xi_{-\lambda_2}\otimes \eta_{\lambda_1}}))\\
=&(b_1\diamondsuit_{\zeta} b_2)\Omega_{-\lambda_1,\lambda_2}(\xi_{-\lambda_1}\otimes \eta_{\lambda_2})\\
=&\Omega_{-\lambda_1,\lambda_2}((b_1\diamondsuit_{\zeta} b_2)(\xi_{-\lambda_1}\otimes \eta_{\lambda_2}))\\
=&\Omega_{-\lambda_1,\lambda_2}(b_1^+\xi_{-\lambda_1}\diamondsuit b_2^-\eta_{\lambda_2})\\ 
=& {}^{\omega}(b_2^+\xi_{-\lambda_2}\diamondsuit b_1^-\eta_{\lambda_1}).
\end{align*}
So $\omega(b_1 \diamondsuit_{\zeta} b_2)(\xi_{-\lambda_2}\otimes \eta_{\lambda_1})=b_2^+\xi_{-\lambda_2}\diamondsuit b_1^-\eta_{\lambda_1}$. Hence $\omega(b_1\diamondsuit_{\zeta} b_2)$ satisfies the characterizing conditions in Theorem \ref{25.2.1} (1), and so $\omega(b_1\diamondsuit_{\zeta} b_2)=b_2\diamondsuit_{-\zeta}\, b_1$.
\end{proof}

\subsection{Positivity of multiplication}

Let $J\subset I$ be a spherical subset. Then the spherical parabolic subalgebras $\dot{\mathbf{U}}_J$ and ${^{\omega}\dot{\mathbf{U}}_J}$ have bases $\dot{\mathbf{B}}\cap \dot{\mathbf{U}}_J$ and $\dot{\mathbf{B}}\cap {^{\omega}\dot{\mathbf{U}}_J}$ respectively. A canonical basis element of $\dot{\mathbf{B}}$ is called {\it spherical parabolic}, if it is contained in $\dot{\mathbf{B}}\cap\dot{\mathbf{U}}_J$ or $\dot{\mathbf{B}}\cap{^{\omega}\dot{\mathbf{U}}_J}$ for some spherical subset $J\subset I$.
\begin{theorem}\label{thm:mult}
For any $\dot{b},\dot{b}'\in \dot{\mathbf{B}}$, if one of them is spherical parabolic, then 
$$\dot{b}\dot{b}'\in \mathbb{N}[v,v^{-1}][\dot{\mathbf{B}}].$$
\end{theorem}

\begin{proof}
Let $J$ be a spherical subset of $I$. 

(a) If $\dot{b}\in \dot{\mathbf{U}}_J$, then the desired statement follows from Theorem \ref{thm:structure-multiplication} and the positivity property of $\tilde{\tilde{\mathbf{B}}}$ with respect to the multiplication in Theorem \ref{14.4.13}. 

(b) If $\dot{b'}\in {^{\omega}\dot{\mathbf{U}}_J}$, then $\omega(\dot{b}) \in \dot{\mathbf{U}}_J$. By Proposition \ref{prop:omega} and case (a), we have $\omega(\dot{b}), \omega(\dot{b}') \in \dot{\mathbf{B}}$ and $\omega(\dot{b}) \omega(\dot{b}')\in \mathbb{N}[v,v^{-1}][\dot{\mathbf{B}}]$. Applying $\omega$, by Proposition \ref{prop:omega}, we obtain $\dot{b}\dot{b}'\in \mathbb{N}[v,v^{-1}][\dot{\mathbf{B}}]$.

(c) If $\dot{b}'\in \dot{\mathbf{U}}_J$ or ${^{\omega}\dot{\mathbf{U}}_J}$, then $\sigma(\dot{b}')\in \dot{\mathbf{U}}_J$ or ${^{\omega}\dot{\mathbf{U}}_J}$. By \cite[Theorem 4.3.2]{Kashiwara-1994} and case (a),(b), we have  $\sigma(\dot{b}),\sigma(\dot{b}')\in \dot{\mathbf{B}}$ and  $\sigma(\dot{b}')\sigma(\dot{b})\in \mathbb{N}[v,v^{-1}[\dot{\mathbf{B}}]$. Applying $\sigma$, by \cite[Theorem 4.3.2]{Kashiwara-1994}, we obtain $\dot{b}\dot{b}'\in \mathbb{N}[v,v^{-1}][\dot{\mathbf{B}}]$.
\end{proof}

\section{Positivity of canonical basis of tensor product}\label{Positivity of canonical basis of tensor product}

By using the positivity property of the canonical basis of the modified quantum group, we establish the positivity property of the canonical basis of tensor product.

\subsection{Canonical basis of tensor product sequel}\label{Canonical basis of tensor product sequel}

Let $\lambda_1,\ldots, \lambda_{m+n}\in X^+$ with $m,n\in \mathbb{N}$. We denote 
\begin{align*}
\blambda&=(-\lambda_1,\ldots,-\lambda_m,\lambda_{m+1},\ldots,\lambda_{m+n}),\\ \Lambda(\blambda)&={^{\omega}\Lambda_{\lambda_1}}\otimes \cdots \otimes {^{\omega}\Lambda_{\lambda_m}}\otimes \Lambda_{\lambda_{m+1}}\otimes\cdots\otimes \Lambda_{\lambda_{m+n}}.
\end{align*}

The {\it canonical basis} $\mathbf{B}(\Lambda(\blambda))$ of the tensor product $\Lambda(\blambda)$ was established by Bao and Wang in \cite[Theorem 2.9]{Bao-Wang-2016}. 

By the associativity of tensor products of based $\mathbf{U}$-modules (see \cite[\S 27.3.6]{Lusztig-1993}), for any subsequences $\blambda'$ and $\blambda''$ such that their concatenation is $\blambda$, we have 
\begin{equation}\label{associativity}
(\Lambda(\blambda),\mathbf{B}(\Lambda(\blambda)))=(\Lambda(\blambda')\otimes \Lambda(\blambda''),\mathbf{B}(\Lambda(\blambda')\otimes \Lambda(\blambda''))).    
\end{equation}

\subsection{Realization of tensor product of simple highest weight modules}

In this subsection, we give the thickening realization of the tensor product of simple highest weight modules as a based $\mathbf{U}$-submodule of a simple highest weight modules of a larger quantum group.

Let $\lambda_1,\lambda_2\in X^+$. Recall that the thickening product $\lambda_1\odot\lambda_2\in \tilde{X}$ defined in \eqref{definition of odot} is dominant, and we have the Verma module $M_{\lambda_1\odot\lambda_2}$, the simple highest weight module $\Lambda_{\lambda_1\odot\lambda_2}$ of $\tilde{\mathbf{U}}$ and the natural projection $\pi_{\lambda_1\odot\lambda_2}:M_{\lambda_1\odot\lambda_2}\rightarrow \Lambda_{\lambda_1\odot\lambda_2}$. We regard $M_{\lambda_1\odot\lambda_2}$ and $\Lambda_{\lambda_1\odot\lambda_2}$ as $\mathbf{U}$-modules via the embedding $\mathbf{U}\hookrightarrow \tilde{\mathbf{U}}$, and define the $\mathbf{U}$-module $\Lambda_{\lambda_1,\lambda_2}$ to be the image of $(\mathbf{f}\theta_{\lambda_2}\mathbf{f})_{\lambda_1\odot\lambda_2}\subset M_{\lambda_1\odot\lambda_2}$ under the natural projection $\pi_{\lambda_1\odot\lambda_2}:M_{\lambda_1\odot\lambda_2}\rightarrow \Lambda_{\lambda_1\odot\lambda_2}$.

By \cite[Corollary 4.5]{Fang-Lan-2025}, $\mathbf{B}(\Lambda_{\lambda_1,\lambda_2})=:\{b^-\eta_{\lambda_1\odot\lambda_2};b\in \mathbf{B}(\mathbf{f}\theta_{\lambda_2}\mathbf{f})\}\setminus \{0\}$ is a basis of $\Lambda_{\lambda_1,\lambda_2}$, and $(\Lambda_{\lambda_1,\lambda_2},\mathbf{B}(\Lambda_{\lambda_1,\lambda_2}))$ is a based $\mathbf{U}$-submodule of $(\Lambda_{\lambda_1\odot\lambda_2},\mathbf{B}(\Lambda_{\lambda_1\odot\lambda_2}))$.

The following result, first established in \cite[Corollary 7.21]{Li-2014} and \cite[Theorem 3.8 \& Theorem 4.7]{Fang-Lan-2025}, realizes the tensor product of two simple highest weight modules as a based $\mathbf{U}$-submodule of a simple highest weight module of $\tilde{\mathbf{U}}$. Our proof here is different and simpler.

\begin{lemma}\label{morphism underlinepsi of case 2}
Let $\lambda_1,\lambda_2\in X^+$. Then there are based $\mathbf{U}$-module isomorphisms
$$\xymatrix@C=2cm{(\Lambda_{\lambda_1,\lambda_2}, \mathbf{B}(\Lambda_{\lambda_1,\lambda_2})) \ar@<.5ex>[r]^-{\underline{\varphi_{\lambda_1,\lambda_2}}} &(\Lambda_{\lambda_1}\otimes \Lambda_{\lambda_2}, \mathbf{B}(\Lambda_{\lambda_1}\otimes \Lambda_{\lambda_2})).\ar@<.5ex>[l]^-{\underline{\psi_{\lambda_1,\lambda_2}}}}$$
\end{lemma}
\begin{proof}
By \cite[Lemma 3.2 \& Definition 3.3]{Fang-Lan-2025}, we can identify 
\begin{align*}
\Lambda_{\lambda_1,\lambda_2}\cong (\mathbf{f}\theta_{\lambda_2}\mathbf{f})_{\lambda_1\odot\lambda_2}/((\mathbf{f}\theta_{\lambda_2}\mathbf{f})_{\lambda_1\odot\lambda_2}\cap \sum_{i\in I}\tilde{\mathbf{f}}\theta_i^{\langle i,\lambda_1\rangle+1}).
\end{align*}
Let $\pi_{\lambda_1,\lambda_2}:(\mathbf{f}\theta_{\lambda_2}\mathbf{f})_{\lambda_1\odot\lambda_2}\rightarrow \Lambda_{\lambda_1,\lambda_2}$ be the natural projection. By \cite[Corollary 4.5]{Fang-Lan-2025}, $\pi_{\lambda_1,\lambda_2}$ is a based $\mathbf{U}$-module homomorphism. By Lemma \ref{isomorphism varphi}, there are $\mathbf{U}$-module isomorphisms $\varphi_{\lambda_1,\lambda_2}$ and $\psi_{\lambda_1,\lambda_2}$. We simply denote $\pi_{\lambda_1,\lambda_2}=\pi,\varphi_{\lambda_1,\lambda_2}=\varphi,\psi_{\lambda_1,\lambda_2}=\psi$. The proof is based on the following diagram 
$$\xymatrix{\mathrm{ker}\,\pi \ar@{^{(}->}[r] \ar@<.5ex>[d]^{\varphi} &(\mathbf{f}\theta_{\lambda_2}\mathbf{f})_{\lambda_1\odot\lambda_2} \ar@<.5ex>[d]^{\varphi} \ar@{->>}[r]^-{\pi} &\Lambda_{\lambda_1,\lambda_2} \ar@<.5ex>[d]^{\underline{\varphi}}\\
\mathrm{ker}\,(\pi_{\lambda_1}\otimes \mathrm{Id}) \ar@{^{(}->}[r] \ar@<.5ex>[u]^{\psi} &M_{\lambda_1}\otimes \Lambda_{\lambda_2} \ar@{->>}[r]^-{\pi_{\lambda_1}\otimes \mathrm{Id}}  \ar@<.5ex>[u]^{\psi} &\Lambda_{\lambda_1}\otimes \Lambda_{\lambda_2}. \ar@<.5ex>[u]^{\underline{\psi}}}$$

We show by induction on $\mathrm{tr}\,|y|\in \mathbb{N}$ that $\pi\psi(x^-v_{\lambda_1}\otimes y^-\eta_{\lambda_2})=0$ for any $x\in \sum_{i\in I}\mathbf{f}\theta_i^{\langle i,\lambda_1\rangle+1}$ and homogeneous $y\in \mathbf{f}$. If $\mathrm{tr}\,|y|=0$, then $y\in \mathbb{Q}(v)$. By \eqref{im psi},we have $\pi\psi(x^-v_{\lambda_1}\otimes\eta_{\lambda_2})=\pi(\theta_{\lambda_2}^-x^-v_{\lambda_1\odot\lambda_2})=0$. If $\mathrm{tr}\,|y|>0$, then it is enough to prove the statement for any $y=\theta_iy'$, with $i\in I$ and homogeneous $y'\in \mathbf{f}$. Note that $\mathrm{tr}\,|y'|<\mathrm{tr}\,|y|$. Since $\pi\psi$ commutes with the actions of $F_i$, by
\begin{align*}
F_i(x^-v_{\lambda_1}\otimes y'^-\eta_{\lambda_2})=&x^-v_{\lambda_1}\otimes F_iy'^-\eta_{\lambda_2}+F_ix^-v_{\lambda_1}\otimes K_{-i}y'^-\eta_{\lambda_2}\\
=&x^-v_{\lambda_1}\otimes y^-\eta_{\lambda_2}+(\theta_ix)^-v_{\lambda_1}\otimes v^{\langle -i,\lambda_2-|y'|\rangle}y'^-\eta_{\lambda_2},
\end{align*}
$\theta_ix\in \sum_{i\in I}\mathbf{f}\theta_i^{\langle i,\lambda_1\rangle+1}$ and the inductive hypothesis, we have $\pi\psi(x^-v_{\lambda_1}\otimes y^-\eta_{\lambda_2})=0$, as desired. Hence $\pi\psi(\mathrm{ker}\,(\pi_{\lambda_1}\otimes \mathrm{Id}))=0$,
and then $\pi\psi$ induces the $\mathbf{U}$-module homomorphism $\underline{\psi}$. By the proof of \cite[Theorem 3.8]{Fang-Lan-2025}, the $\mathbf{U}$-module homomorphism $(\pi_{\lambda_1}\otimes \mathrm{Id})\varphi$ induces the $\mathbf{U}$-module homomorphism $\underline{\varphi}$. From the commutative diagram, $\underline{\varphi}$ and $\underline{\psi}$ are the inverses of each other, and both of them are $\mathbf{U}$-module isomorphisms.

By \cite[Theorem 14.4.11]{Lusztig-1993}, $\pi_{\lambda_1}:M_{\lambda_1}\rightarrow \Lambda_{\lambda_1}$ is a based $\mathbf{U}$-module homomorphism, and then so is $\pi_{\lambda_1}\otimes \mathrm{Id}:M_{\lambda_1}\otimes \Lambda_{\lambda_2}\rightarrow \Lambda_{\lambda_1}\otimes \Lambda_{\lambda_2}$. Combining with Theorem \ref{positivity of transition matrix}, both $\underline{\varphi}$ and $\underline{\psi}$ are based $\mathbf{U}$-module isomorphisms.
\end{proof}

Recall the thickening data $(\tilde{I},\cdot)$ and $(\tilde{Y},\tilde{X},\langle\,,\,\rangle,\ldots)$ of $(I,\cdot)$ and $\langle Y,X,\langle\,,\,\rangle,\ldots)$ defined in \S \ref{The thickening product}. For $n\geqslant 1$, we inductively define $(\tilde{I}^n,\cdot)$ and $(\tilde{Y}^n,\tilde{X}^n,\langle\,,\,\rangle,\ldots)$ to be the thickening data of $(\tilde{I}^{n-1},\cdot)$ and $(\tilde{Y}^{n-1},\tilde{X}^{n-1},\langle\,,\,\rangle,\ldots)$, where $(\tilde{I}^0,\cdot)=(I,\cdot)$ and $(\tilde{Y}^0,\tilde{X}^0,\langle\,,\,\rangle,\ldots)=(\tilde{Y},\tilde{X},\langle\,,\,\rangle,\ldots)$. Then $(\tilde{I}^n,\cdot)$ is a symmetric Cartan datum and $(\tilde{Y}^n,\tilde{X}^n,\langle\,,\,\rangle,\ldots)$ is a $\tilde{Y}^n$-regular root datum of type $(\tilde{I}^n,\cdot)$. Let $\tilde{\mathbf{f}}^n$ and $\tilde{\mathbf{U}}^n$ be the algebras defined in \S \ref{The algebra f} and \S \ref{Quantized enveloping algebra} associated with $(\tilde{I}^n,\cdot)$ and $(\tilde{Y}^n,\tilde{X}^n,\langle\,,\,\rangle,\ldots)$ respectively. Then there are natural subalgebra embeddings $\mathbf{f}\hookrightarrow\tilde{\mathbf{f}}\hookrightarrow\cdots\hookrightarrow \tilde{\mathbf{f}}^n$ and $\mathbf{U}\hookrightarrow\tilde{\mathbf{U}}\hookrightarrow\cdots\hookrightarrow \tilde{\mathbf{U}}^n$.

For any $\lambda_1,\ldots,\lambda_n\in X^+$, we inductively define $\lambda_1\odot\cdots\odot\lambda_n\in (\tilde{X}^{n-1})^+$ to be the thickening product of $\lambda_1\odot\cdots\odot\lambda_{n-1}\in (\tilde{X}^{n-2})^+$ and $0\odot\cdots\odot 0\odot\lambda_{n}\in (\tilde{X}^{n-2})^+$.

\begin{proposition}\label{thickening realization of tensor product of simple based modules}
Let $n\geqslant 2,\lambda_1,\ldots,\lambda_n\in X^+$ and $\mathbf{B}(\Lambda(\blambda))$ be the canonical basis of the tensor product $\Lambda(\blambda)=\Lambda_{\lambda_1}\otimes\cdots\otimes \Lambda_{\lambda_n}$. Then there exist a based $\mathbf{U}$-submodule $(\Lambda_{\blambda},\mathbf{B}(\Lambda_{\blambda}))$ of the based $\tilde{\mathbf{U}}^{n-1}$-module $(\Lambda_{\lambda_1\odot\cdots\odot\lambda_n},\mathbf{B}(\Lambda_{\lambda_1\odot\cdots\odot\lambda_n}))$, and based $\mathbf{U}$-module isomorphisms 
$$\xymatrix@C=2cm{(\Lambda_{\blambda},\mathbf{B}(\Lambda_{\blambda})) \ar@<.5ex>[r]^-{\underline{\varphi_{\blambda}}} &(\Lambda(\blambda),\mathbf{B}(\Lambda(\blambda))). \ar@<.5ex>[l]^-{\underline{\psi_{\blambda}}}}$$
\end{proposition}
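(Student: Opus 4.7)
The plan is to prove the proposition by induction on $n$. The base case $n=1$ is trivial: take $\Lambda_{\blambda} = \Lambda_{\lambda_1}$ with $\mathbf{B}(\Lambda_{\blambda}) = \mathbf{B}(\Lambda_{\lambda_1})$ and identity maps. The base case $n=2$ is exactly Proposition \ref{morphism underlinepsi of case 2} together with \eqref{Fang-Lan-2025-4.7}: set $\Lambda_{\blambda} = \Lambda_{\lambda_1, \lambda_2}$ and take $\underline{\varphi_{\blambda}} = \underline{\varphi_{\lambda_1, \lambda_2}}$, $\underline{\psi_{\blambda}} = \underline{\psi_{\lambda_1, \lambda_2}}$.

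For the inductive step with $n \geq 3$, I would exploit the fact that $(\tilde I^{n-2}, \cdot)$ is again a symmetric Cartan datum with $\tilde Y^{n-2}$-regular root datum $(\tilde Y^{n-2}, \tilde X^{n-2}, \langle\,,\,\rangle,\ldots)$, so every construction of Section \ref{Tensor product of simple highest weight modules revisited} applies verbatim with $\tilde{\mathbf{U}}^{n-2}$ in place of $\mathbf{U}$. Setting $\mu_1 := \lambda_1 \odot \cdots \odot \lambda_{n-1}$ and $\mu_2 := 0 \odot \cdots \odot 0 \odot \lambda_n$, both in $(\tilde X^{n-2})^+$ with $\mu_1 \odot \mu_2 = \lambda_1 \odot \cdots \odot \lambda_n$, the $n=2$ case at this higher level yields a based $\tilde{\mathbf{U}}^{n-2}$-submodule $(\Lambda_{\mu_1, \mu_2}, \mathbf{B}(\Lambda_{\mu_1, \mu_2}))$ of $(\Lambda_{\lambda_1 \odot \cdots \odot \lambda_n}, \mathbf{B}(\Lambda_{\lambda_1 \odot \cdots \odot \lambda_n}))$ together with a based $\tilde{\mathbf{U}}^{n-2}$-isomorphism $\underline{\varphi_{\mu_1, \mu_2}}: \Lambda_{\mu_1, \mu_2} \to \Lambda_{\mu_1} \otimes \Lambda_{\mu_2}$. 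Simultaneously, the inductive hypothesis applied to $\blambda' = (\lambda_1,\ldots,\lambda_{n-1})$ produces a based $\mathbf{U}$-submodule $\Lambda_{\blambda'} \subset \Lambda_{\mu_1}$ realizing $\Lambda(\blambda')$, and applied to the length-$(n-1)$ sequence $(0,\ldots,0,\lambda_n)$ it produces a based $\mathbf{U}$-submodule $\Lambda_{(0,\ldots,0,\lambda_n)} \subset \Lambda_{\mu_2}$ realizing $\Lambda_0 \otimes \cdots \otimes \Lambda_0 \otimes \Lambda_{\lambda_n}$, which is canonically identified with $\Lambda_{\lambda_n}$ as a based $\mathbf{U}$-module. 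I would then define
$$\Lambda_{\blambda} := \underline{\varphi_{\mu_1,\mu_2}}^{-1}\bigl(\Lambda_{\blambda'} \otimes \Lambda_{(0,\ldots,0,\lambda_n)}\bigr),$$
with $\mathbf{B}(\Lambda_{\blambda})$ the preimage of the corresponding canonical basis, and $\underline{\varphi_{\blambda}}$ the composition of $\underline{\varphi_{\mu_1,\mu_2}}|_{\Lambda_{\blambda}}$ with $\underline{\varphi_{\blambda'}} \otimes \underline{\varphi_{(0,\ldots,0,\lambda_n)}}$ followed by the canonical identification $\Lambda(\blambda')\otimes \Lambda_{\lambda_n} = \Lambda(\blambda)$ from \eqref{associativity}.

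The main obstacle will be verifying that $\Lambda_{\blambda'} \otimes \Lambda_{(0,\ldots,0,\lambda_n)}$ is a based $\mathbf{U}$-submodule of $\Lambda_{\mu_1} \otimes \Lambda_{\mu_2}$ viewed as a based $\mathbf{U}$-module by restriction from its $\tilde{\mathbf{U}}^{n-2}$-structure, so that pulling back through the based isomorphism $\underline{\varphi_{\mu_1,\mu_2}}$ indeed yields a based $\mathbf{U}$-submodule of $\Lambda_{\mu_1,\mu_2}$. The subtle point is that the canonical basis of a tensor product of based modules depends on the quasi-$\mathcal{R}$-matrix, which a priori differs between $\mathbf{U}$ and $\tilde{\mathbf{U}}^{n-2}$; the extra terms in $\tilde\Theta$ not in $\Theta$ involve the generators $F_{i'}, E_{i'}$ for primed indices $i' \in \tilde I^{n-2} \setminus I$, and one must show that their contribution is under control on the subspace $\Lambda_{\blambda'} \otimes \Lambda_{(0,\ldots,0,\lambda_n)}$, or equivalently that the two $\Psi$-involutions produce the same bar-invariant lifts there. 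I expect this to follow from the explicit Verma-type realization of Section \ref{Canonical basis of fthetaf} combined with the specific form of $\mu_1, \mu_2$ in the primed directions (the $I^{(k)}$-component of $\mu_1$ encodes $\lambda_{k+1}$, while $\mu_2$ is concentrated in the top primed layer). Once this compatibility is granted, the uniqueness characterization of canonical bases on tensor products (bar-invariance plus the $v^{-1}$-lattice condition) forces the two bases to coincide, and the induction closes.
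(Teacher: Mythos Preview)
Your approach is essentially the same as the paper's: induction on $n$, peeling off the last factor via $\blambda = \blambda'(\lambda_n)$, applying the inductive hypothesis both to $\blambda'$ and to the padded sequence $(0,\ldots,0,\lambda_n)$, and then invoking the $n=2$ case at the $\tilde{\mathbf{U}}^{n-2}$ level with $\mu_1=\lambda_1\odot\cdots\odot\lambda_{n-1}$ and $\mu_2=0\odot\cdots\odot 0\odot\lambda_n$ to pull everything back into $\Lambda_{\lambda_1\odot\cdots\odot\lambda_n}$; the definitions of $\Lambda_{\blambda}$, $\underline{\varphi_{\blambda}}$, $\underline{\psi_{\blambda}}$ you write down are exactly those in the paper.

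On your ``main obstacle'': the paper simply asserts, without further argument, that the tensor product of the based $\mathbf{U}$-submodules $(\Lambda_{\blambda'},\mathbf{B}(\Lambda_{\blambda'}))\subset(\Lambda_{\mu_1},\mathbf{B}(\Lambda_{\mu_1}))$ and $(\Lambda_{(0,\ldots,0,\lambda_n)},\mathbf{B}(\Lambda_{(0,\ldots,0,\lambda_n)}))\subset(\Lambda_{\mu_2},\mathbf{B}(\Lambda_{\mu_2}))$ is a based $\mathbf{U}$-submodule of the based $\tilde{\mathbf{U}}^{n-2}$-module $(\Lambda_{\mu_1}\otimes\Lambda_{\mu_2},\mathbf{B}(\Lambda_{\mu_1}\otimes\Lambda_{\mu_2}))$, treating this as a formal consequence of the tensor-product machinery for based modules from \cite[\S 27.3]{Lusztig-1993} and \cite{Bao-Wang-2016}. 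It does not isolate or resolve the quasi-$\mathcal{R}$-matrix compatibility you highlight, so you are being more cautious than the paper here, not less; your proposed resolution via the special shape of $\mu_2$ in the primed directions is not a route the paper pursues.
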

\begin{proof}
We prove by induction on $n$. If $n=2$, it is Lemma \ref{morphism underlinepsi of case 2}. If $n\geqslant 3$, then we set $\blambda'=(\lambda_1,\ldots,\lambda_{n-1})$, $\blambda''=(0,\ldots,0, \lambda_{n})$ and let $\tilde \l'=\lambda_1\odot\cdots\odot\lambda_{n-1},\tilde \l''=0 \odot \cdots \odot 0 \odot \l_n\in (\tilde{X}^{n-2})^+$. By the inductive hypothesis, there exist a based $\mathbf{U}$-submodule $(\Lambda_{\blambda'},\mathbf{B}(\Lambda_{\blambda'}))$ of the based $\tilde{\mathbf{U}}^{n-2}$-module $(\Lambda_{\tilde \l'},\mathbf{B}(\Lambda_{\tilde \l'}))$, and based $\mathbf{U}$-module isomorphisms
$\xymatrix@C=1cm{(\Lambda_{\blambda'},\mathbf{B}(\Lambda_{\blambda'})) \ar@<.5ex>[r]^-{\underline{\varphi_{\blambda'}}} &(\Lambda(\blambda'),\mathbf{B}(\Lambda(\blambda'))) \ar@<.5ex>[l]^-{\underline{\psi_{\blambda'}}}}$. Similarly, there exist a based $\mathbf{U}$-submodule $(\Lambda_{\blambda''},\mathbf{B}(\Lambda_{\blambda''}))$ of the based $\tilde{\mathbf{U}}^{n-2}$-module $(\Lambda_{\tilde \l''},\mathbf{B}(\Lambda_{\tilde \l''}))$, and based $\mathbf{U}$-module isomorphisms $
\xymatrix@C=1cm{(\Lambda_{\blambda''},\mathbf{B}(\Lambda_{\blambda''})) \ar@<.5ex>[r]^-{\underline{\varphi_{\blambda''}}} &(\Lambda(\blambda''),\mathbf{B}(\Lambda(\blambda''))) \ar@<.5ex>[l]^-{\underline{\psi_{\blambda''}}}}$.

Note that the simple $\mathbf{U}$-module $\Lambda_0$ with the highest weight $0$ is $\mathbb{Q}(v)$ such that $F_i,E_i$ acts by $0$ and $K_{\mu}$ acts by $1$ for any $i\in I$ and $\mu\in Y$, and so $(\Lambda(\blambda''),\mathbf{B}(\Lambda(\blambda'')))=(\Lambda_{\lambda_n},\mathbf{B}(\Lambda_{\lambda_n}))$. Taking the tensor product of the based modules, we obtain a based $\mathbf{U}$-submodule $(\Lambda_{\blambda'}\otimes \Lambda_{\blambda''},\mathbf{B}(\Lambda_{\blambda'}\otimes \Lambda_{\blambda''}))$ of the based $\tilde{\mathbf{U}}^{n-2}$-module $(\Lambda_{\tilde \l'}\otimes \Lambda_{\tilde \l''},\mathbf{B}(\Lambda_{\tilde \l'}\otimes \Lambda_{\tilde \l''}))$, and based $\mathbf{U}$-module isomorphisms $$\xymatrix@C=2cm{(\Lambda_{\blambda'}\otimes \Lambda_{\blambda''},\mathbf{B}(\Lambda_{\blambda'}\otimes \Lambda_{\blambda''})) \ar@<.5ex>[r]^-{\underline{\varphi_{\blambda'}}\otimes \underline{\varphi_{\blambda''}}} &(\Lambda(\blambda),\mathbf{B}(\Lambda(\blambda))). \ar@<.5ex>[l]^-{\underline{\psi_{\blambda'}}\otimes \underline{\psi_{\blambda''}}}}$$

Applying Lemma \ref{morphism underlinepsi of case 2} for the tensor product of $\tilde{\mathbf{U}}^{n-2}$-modules, there exist based $\tilde{\mathbf{U}}^{n-2}$-modules isomorphisms
\begin{equation*}
\xymatrix@C=2cm{(\Lambda_{\tilde \lambda', \tilde \lambda''},\mathbf{B}(\Lambda_{\tilde \lambda', \tilde \lambda''})) \ar@<.5ex>[r]^-{\underline{\varphi_{\tilde \lambda', \tilde \lambda''}}} &(\Lambda_{\tilde \l'}\otimes \Lambda_{\tilde \l''},\mathbf{B}(\Lambda_{\tilde \l'}\otimes \Lambda_{\tilde \l''})), \ar@<.5ex>[l]^-{\underline{\psi_{\tilde \lambda', \tilde \lambda''}}}}
\end{equation*}
where $(\Lambda_{\tilde \lambda', \tilde \lambda''},\mathbf{B}(\Lambda_{\tilde \lambda', \tilde \lambda''})$ is a based $\tilde{\mathbf{U}}^{n-2}$-submodule of the based $\tilde{\mathbf{U}}^{n-1}$-module $(\Lambda_{\lambda_1\odot\cdots\odot\lambda_n},\mathbf{B}(\Lambda_{\lambda_1\odot\cdots\odot\lambda_n}))$. Let $(\Lambda_{\blambda},\mathbf{B}(\Lambda_{\blambda}))=\underline{\psi_{\tilde \l', \tilde \l''}}((\Lambda_{\blambda'}\otimes \Lambda_{\blambda''},\mathbf{B}(\Lambda_{\blambda'}\otimes \Lambda_{\blambda''})))$. Then it is a based $\mathbf{U}$-submodule of $(\Lambda_{\lambda_1\odot\cdots\odot\lambda_n},\mathbf{B}(\Lambda_{\lambda_1\odot\cdots\odot\lambda_n}))$, and 
\begin{align*}
\underline{\varphi_{\blambda}}=(\underline{\varphi_{\blambda'}}\otimes \underline{\varphi_{\blambda''}})(\underline{\varphi_{\tilde \l',\tilde \l''}}|_{\Lambda_{\blambda}}), \quad
\underline{\psi_{\blambda}} =\underline{\psi_{\tilde \l',\tilde \l''}}(\underline{\psi_{\blambda'}}\otimes \underline{\psi_{\blambda''}})
\end{align*}
are the desired based $\mathbf{U}$-module isomorphisms.
\end{proof}

\subsection{Positivity of canonical basis of tensor product}

\begin{theorem}\label{Positivity in general case}
Let $\lambda_1,\ldots,\lambda_{m},\lambda_{m+1},\ldots,\lambda_{m+n}\in X^+$ with $m,n\in \mathbb{N}$. The canonical basis $\mathbf{B}(\Lambda(\blambda))$ of the tensor product $\Lambda(\blambda)$ has the following positivity property: 
\begin{enumerate}
\item (Positivity of transition matrix) 
$$\mathbf{B}(\Lambda(\blambda))\subset \mathbb{N}[v^{-1}][\mathbf{B}({^{\omega}\Lambda_{\lambda_1}})\otimes\cdots\otimes \mathbf{B}({^{\omega}\Lambda_{\lambda_m}})\otimes \mathbf{B}(\Lambda_{\lambda_{m+1}})\otimes\cdots\otimes \mathbf{B}(\Lambda_{\lambda_{m+n}})].$$
\item (Positivity of action) Let $\dot{b}\in \dot{\mathbf{B}}$. If $\dot{b}$ is spherical parabolic or $\Lambda(\blambda)$ is the tensor product of simple highest (resp. lowest) weight modules, that is, $m=0$ (resp. $n=0$), then 
\begin{align*}
\dot{b}(\mathbf{B}(\Lambda(\blambda)))\subset \mathbb{N}[v,v^{-1}][\mathbf{B}(\Lambda(\blambda))].
\end{align*}
\end{enumerate}
\end{theorem}
\begin{proof}
Without loss of generality, we may assume that $m=n$ by taking the tensor products ${{^\omega}\Lambda_{0}}\otimes\ldots\otimes {{^\omega}\Lambda_{0}}\otimes \Lambda(\blambda)$ or $\Lambda(\blambda)\otimes \Lambda_0\otimes\ldots\otimes \Lambda_0$ if necessary. We denote $\blambda'=(-\lambda_1,\ldots,-\lambda_n)$ and $\blambda''=(\lambda_{n+1},\ldots,\lambda_{2n})$ so that $\blambda=\blambda'\blambda''$. If $n\geqslant 2$, by Proposition \ref{thickening realization of tensor product of simple based modules}, there exist a based $\mathbf{U}$-submodule $(\Lambda_{\blambda''},\mathbf{B}(\Lambda_{\blambda''}))$ of the based $\tilde{\mathbf{U}}^{n-1}$-module $(\Lambda_{\lambda_{n+1}\odot\cdots\odot\lambda_{2n}},\mathbf{B}(\Lambda_{\lambda_{n+1}\odot\cdots\odot\lambda_{2n}}))$, and a based $\mathbf{U}$-module isomorphism 
\begin{equation}\label{larger highest}
(\Lambda_{\blambda''},\mathbf{B}(\Lambda_{\blambda''}))\cong (\Lambda(\blambda''),\mathbf{B}(\Lambda(\blambda''))).   
\end{equation}
Similarly, there exist a based $\mathbf{U}$-submodule $({^{\omega}\Lambda_{\blambda'}},\mathbf{B}({^{\omega}\Lambda_{\blambda'}}))$ of the based $\tilde{\mathbf{U}}^{n-1}$-module $({^{\omega}\Lambda_{\lambda_{1}\odot\cdots\odot\lambda_{n}}},\!\mathbf{B}({^{\omega}\Lambda_{\lambda_{1}\odot\cdots\odot\lambda_{n}}}))$, and a based $\mathbf{U}$-module isomorphism 
\begin{equation}\label{larger lowest}
({^{\omega}\Lambda_{\blambda'}},\mathbf{B}({^{\omega}\Lambda_{\blambda'}}))\cong (\Lambda(\blambda'),\mathbf{B}(\Lambda(\blambda'))).
\end{equation}

Taking the tensor product of based modules, we obtain a based $\mathbf{U}$-submodule $({^{\omega}\Lambda_{\blambda'}}\otimes \Lambda_{\blambda''},\mathbf{B}({^{\omega}\Lambda_{\blambda'}}\otimes \Lambda_{\blambda''}))$ of the based $\tilde{\mathbf{U}}^{n-1}$-module 
\begin{equation}\label{larger lowest tensor larger highest}
({^{\omega}\Lambda_{\lambda_{1}\odot\ldots\odot \lambda_{n}}}\otimes \Lambda_{\lambda_{n+1}\odot\cdots\odot \lambda_{2 n}},\mathbf{B}({^{\omega}\Lambda_{\lambda_{1}\odot\cdots\odot \lambda_{n}}}\otimes \Lambda_{\lambda_{n+1}\odot\ldots\odot \lambda_{2 n}})),
\end{equation}
and based $\mathbf{U}$-module isomorphisms
\begin{equation}\label{thickening realization of based modules in general case}
({^{\omega}\Lambda_{\blambda'}}\otimes \Lambda_{\blambda''},\mathbf{B}({^{\omega}\Lambda_{\blambda'}}\otimes \Lambda_{\blambda''}))\cong (\Lambda(\blambda),\mathbf{B}(\Lambda(\blambda)))
\end{equation}

(1) If $n=1$, the desired statement is Theorem \ref{thm:trans-pos}. If $n\geqslant 2$, applying Theorem \ref{thm:trans-pos} for the tensor product of $\tilde{\mathbf{U}}^{n-1}$-modules in \eqref{larger lowest tensor larger highest}, we have 
$$\mathbf{B}({^{\omega}\Lambda_{\lambda_{1}\odot\ldots\odot \lambda_{n}}}\otimes \Lambda_{\lambda_{n+1}\odot\cdots\odot \lambda_{2 n}})\subset \mathbb{N}[v^{-1}][\mathbf{B}({^{\omega}\Lambda_{\lambda_{1}\odot\ldots\odot \lambda_{n}}})\otimes \mathbf{B}(\Lambda_{\lambda_{n+1}\odot\cdots\odot \lambda_{2 n}})],$$ 
and then $\mathbf{B}({^{\omega}\Lambda_{\blambda'}}\otimes \Lambda_{\blambda''})\subset \mathbb{N}[v^{-1}][\mathbf{B}({^{\omega}\Lambda_{\blambda'}})\otimes \mathbf{B}(\Lambda_{\blambda''})]$. By the based $\mathbf{U}$-module isomorphisms in \eqref{larger highest}, \eqref{larger lowest} and \eqref{thickening realization of based modules in general case}, $\mathbf{B}(\Lambda(\blambda))\subset \mathbb{N}[v^{-1}][\mathbf{B}(\Lambda(\blambda'))\otimes \mathbf{B}(\Lambda(\blambda''))]$. 

By Proposition \ref{thickening realization of tensor product of simple based modules}, and applying \cite[Corollary 4.8]{Fang-Lan-2025} for the tensor product of $\tilde{\mathbf{U}}^{n-1}$-modules inductively, we have $\mathbf{B}(\Lambda(\blambda''))\subset \mathbb{N}[v^{-1}][\mathbf{B}(\Lambda_{\lambda_{n+1}})\otimes \cdots \otimes\mathbf{B}(\Lambda_{\lambda_{2n}})]$. Similarly, $\mathbf{B}(\Lambda(\blambda'))\subset \mathbb{N}[v^{-1}][\mathbf{B}({^{\omega}\Lambda_{\lambda_1}})\otimes\cdots\otimes \mathbf{B}({^{\omega}\Lambda_{\lambda_n}})]$. Therefore, we have $$\mathbf{B}(\Lambda(\blambda))\subset \mathbb{N}[v^{-1}][\mathbf{B}({^{\omega}\Lambda_{\lambda_1}})\otimes\cdots\otimes \mathbf{B}({^{\omega}\Lambda_{\lambda_m}})\otimes \mathbf{B}(\Lambda_{\lambda_{m+1}})\otimes\cdots\otimes \mathbf{B}(\Lambda_{\lambda_{m+n}})].$$

(2) We first prove the case that $\dot{b}\in \dot{\mathbf{B}}$ is spherical parabolic. If $n=1$, for any $b_1^+\xi_{-\lambda_1}\diamondsuit b_2^-\eta_{\lambda_2}\in \mathbf{B}({^{\omega}\Lambda_{\lambda_1}}\otimes \Lambda_{\lambda_2})$, by Theorem \ref{thm:mult} and Theorem \ref{25.2.1}, we have $\dot{b}(b_1\diamondsuit_{\lambda_2-\lambda_1}b_2)\in \mathbb{N}[v,v^{-1}][\dot{\mathbf{B}}]$ and 
$$\dot{b}(b_1^+\xi_{-\lambda_1}\diamondsuit b_2^-\eta_{\lambda_2})=\dot{b}(b_1\diamondsuit_{\lambda_2-\lambda_1}b_2)(\xi_{-\lambda_1}\otimes \eta_{\lambda_2})\in \mathbb{N}[v,v^{-1}][\mathbf{B}({^{\omega}\Lambda_{\lambda_1}}\otimes \Lambda_{\lambda_2})].$$
Hence 

(a) $\dot{b}(\mathbf{B}({^{\omega}\Lambda_{\lambda_1}}\otimes \Lambda_{\lambda_2}))\subset \mathbb{N}[v,v^{-1}][\mathbf{B}({^{\omega}\Lambda_{\lambda_1}}\otimes \Lambda_{\lambda_2})]$.

If $n\geqslant 2$, let $\dot{\tilde{\mathbf{U}}}^{n-1}$ be the modified quantum group corresponding to $\tilde{\mathbf{U}}^{n-1}$ associated with $(\tilde{Y}^{n-1},\tilde{X}^{n-1},\langle\,,\,\rangle,\ldots)$, and $\dot{\tilde{\mathbf{B}}}^{n-1}$ be its canonical basis. Then there are natural embeddings $\dot{\mathbf{U}}\subset \dot{\tilde{\mathbf{U}}}^{n-1}$ and $\dot{\mathbf{B}}\subset \dot{\tilde{\mathbf{B}}}^{n-1}$. Applying (a) for $\dot{b}\in \dot{\tilde{\mathbf{B}}}^{n-1}$ and the tensor product of $\tilde{\mathbf{U}}^{n-1}$-modules in \eqref{larger lowest tensor larger highest}, we obtain 
$$\dot{b}(\mathbf{B}({^{\omega}\Lambda_{\lambda_{1}\odot\cdots\odot \lambda_{n}}}\otimes \Lambda_{\lambda_{n+1}\odot\ldots\odot \lambda_{2 n}}))\subset \mathbb{N}[v,v^{-1}][\mathbf{B}({^{\omega}\Lambda_{\lambda_{1}\odot\cdots\odot \lambda_{n}}}\otimes \Lambda_{\lambda_{n+1}\odot\ldots\odot \lambda_{2 n}})],$$
and so $\dot{b}(\mathbf{B}({^{\omega}\Lambda_{\blambda'}}\otimes \Lambda_{\blambda''}))\subset \mathbb{N}[v,v^{-1}][\mathbf{B}({^{\omega}\Lambda_{\blambda'}}\otimes \Lambda_{\blambda''})]$. By the based $\mathbf{U}$-module isomorphism in \eqref{thickening realization of based modules in general case}, we have $\dot{b}(\mathbf{B}(\Lambda(\blambda)))\subset \mathbb{N}[v,v^{-1}][\mathbf{B}(\Lambda(\blambda))]$.

Next we prove the case that $\Lambda(\blambda)$ is the tensor product of simple highest weight modules. The case that $\Lambda(\blambda)$ is the tensor product of simple lowest weight modules can be proved in a similar way. Let $\dot{b}\in \dot{\mathbf{B}}$ and $b^-\eta_{\lambda_1}\in \mathbf{B}(\Lambda_{\lambda_1})$. Notice that $1\diamondsuit_{\lambda}b\in \dot{\mathbf{B}}$ is spherical parabolic. By Theorem \ref{thm:mult} and Theorem \ref{25.2.1}, we have $\dot{b}(1\diamondsuit_{\lambda}b)\in \mathbb{N}[v,v^{-1}][\dot{\mathbf{B}}]$ and
$$\dot{b}(b^-\eta_{\lambda_1})=\dot{b}b^-1_{\lambda_1}\eta_{\lambda_1}=\dot{b}(1\diamondsuit_{\lambda}b)\eta_{\lambda_1}\in \mathbb{N}[v,v^{-1}][\mathbf{B}(\Lambda_{\lambda_1})].$$
Hence 

(b) $\dot{b}(\mathbf{B}(\Lambda_{\lambda_1}))\subset \mathbb{N}[v,v^{-1}][\mathbf{B}(\Lambda_{\lambda_1})].$

If $n\geqslant 2$, applying (b) for $\dot{b}\in \dot{\tilde{\mathbf{B}}}^{n-1}$ and the $\tilde{\mathbf{U}}^{n-1}$-modules $\Lambda_{\lambda_1\odot\cdots\odot\lambda_n}$, we obtain 
$$\dot{b}(\mathbf{B}(\Lambda_{\lambda_1\odot\cdots\odot\lambda_n}))\subset \mathbb{N}[v,v^{-1}][\mathbf{B}(\Lambda_{\lambda_1\odot\cdots\odot\lambda_n})],$$
and so $\dot{b}(\mathbf{B}(\Lambda_{\lambda_1,\ldots,\lambda_n}))\subset \mathbb{N}[v,v^{-1}][\mathbf{B}(\Lambda_{\lambda_1,\ldots,\lambda_n})]$. By the based $\mathbf{U}$-module isomorphisms in Proposition \ref{thickening realization of tensor product of simple based modules}, we have $\dot{b}(\mathbf{B}(\Lambda(\blambda)))\subset \mathbb{N}[v,v^{-1}][\mathbf{B}(\Lambda(\blambda))]$.
\end{proof}

\printbibliography

\end{document}